\newtheorem{theorem}{Theorem}[section]
\newtheorem{lemma}{Lemma}[section]
\newtheorem{corollary}{Corollary}[section]
\newtheorem{conjecture}{Conjecture}[section]
\numberwithin{equation}{section}
\newcommand{\cE}{{\mathcal{E}}}
\newcommand{\cF}{{\mathcal{F}}}
\newcommand{\cK}{{\mathcal{K}}}
\newcommand{\cP}{{\mathcal{P}}}
\newcommand{\cS}{{\mathcal{S}}}
        \newcommand{\field}[1]{{\mathbb{#1}}}
        \newcommand{\ZZ}{\field{Z}}
        \newcommand{\RR}{\field{R}}
        \newcommand{\CC}{\field{C}}
\newcommand{\Dom}{\mbox{\rm Dom}}
\newcommand{\Tr}{\mbox{\rm Tr}}
\begin{document}

\title[Periodic Schr\"odinger operators with hypersurface magnetic wells]{Spectral gaps for
periodic Schr\"odinger operators with hypersurface magnetic wells:
Analysis near the bottom}

\author{B. Helffer}

\address{D\'epartement de Math\'ematiques, B\^atiment 425, Univ
Paris-Sud et CNRS, F-91405 Orsay C\'edex, France}
\email{Bernard.Helffer@math.u-psud.fr}

\author{Y. A. Kordyukov }
\address{Institute of Mathematics, Russian Academy of Sciences, 112 Chernyshevsky
str. 450077 Ufa, Russia} \email{yurikor@matem.anrb.ru}

\thanks{Y.K. is partially supported by the Russian Foundation of Basic
Research (grant 06-01-00208).}

\begin{abstract}
We consider a periodic magnetic Schr\"odinger operator $H^h$,
depending on the semiclassical parameter $h>0$, on a noncompact
Riemannian manifold $M$ such that $H^1(M, \RR)=0$ endowed with a
properly discontinuous cocompact isometric action of a discrete
group. We assume that there is no electric field and that the
magnetic field has a periodic set of compact magnetic wells. We
suppose that the magnetic field vanishes regularly on a hypersurface
$S$. First, we prove upper and lower estimates for the bottom
$\lambda_0(H^h)$ of the spectrum of the operator $H^h$in $L^2(M)$.
Then, assuming the existence of non-degenerate miniwells for the
reduced spectral problem on $S$, we prove the existence of an
arbitrary large number of spectral gaps for the operator $H^h$ in
the region close to $\lambda_0(H^h)$, as $h\to 0$. In this case, we
also obtain upper estimates for the eigenvalues of the one-well
problem.
\end{abstract}
 \maketitle

\section{Preliminaries and main results}
Let $ M$ be a noncompact oriented manifold of dimension $n\geq 2$
equipped with a properly discontinuous action of a finitely
generated, discrete group $\Gamma$ such that $M/\Gamma$ is compact.
Suppose that $H^1(M, \RR) = 0$, i.e. any closed $1$-form on $M$ is
exact. Let $g$ be a $\Gamma$-invariant Riemannian metric and $\bf B$
a real-valued $\Gamma$-invariant closed 2-form on $M$. Assume that
$\bf B$ is exact and choose a real-valued 1-form $\bf A$ on $M$ such
that $d{\bf A} = \bf B$.

Thus, one has a natural mapping
\[
u\mapsto ih\,du+{\bf A}u
\]
from $C^\infty_c(M)$ to the space $\Omega^1_c(M)$ of smooth,
compactly supported one-forms on $M$. The Riemannian metric allows
to define scalar products in these spaces and consider the adjoint
operator
\[
(ih\,d+{\bf A})^* : \Omega^1_c(M)\to C^\infty_c(M).
\]
A Schr\"odinger operator with magnetic potential $\bf A$ is defined
by the formula
\[
H^h = (ih\,d+{\bf A})^* (ih\,d+{\bf A}).
\]
Here $h>0$ is a semiclassical parameter, which is assumed to be
small.

Choose local coordinates $X=(X_1,\ldots,X_n)$ on $M$. Write the
1-form $\bf A$ in the local coordinates as
\[
{\bf A}= \sum_{j=1}^nA_j(X)\,dX_j,
\]
the matrix of the Riemannian metric $g$ as
\[
g(X)=(g_{j\ell}(X))_{1\leq j,\ell\leq n}
\]
and its inverse as
\[
g(X)^{-1}=(g^{j\ell}(X))_{1\leq j,\ell\leq n}.
\]
Denote $|g(X)|=\det(g(X))$. Then the magnetic field $\bf B$ is given
by the following formula
\[
{\bf B}=\sum_{j<k}B_{jk}\,dX_j\wedge dX_k, \quad
B_{jk}=\frac{\partial A_k}{\partial X_j}-\frac{\partial
A_j}{\partial X_k}.
\]
Moreover, the operator $H^h$ has the form
\begin{multline*}
H^h=\frac{1}{\sqrt{|g(X)|}}\sum_{1\leq j,\ell\leq n}\left(i h
\frac{\partial}{\partial X_j}+A_j(X)\right)  \\
\times \left[\sqrt{|g(X)|} g^{j\ell}(X) \left(i h
\frac{\partial}{\partial X_\ell}+A_\ell(X)\right)\right].
\end{multline*}

For any $x\in M$, denote by $B(x)$ the anti-symmetric linear
operator on the tangent space $T_x{ M}$ associated with the 2-form
$\bf B$:
\[
g_x(B(x)u,v)={\bf B}_x(u,v),\quad u,v\in T_x{ M}.
\]
Recall that the intensity of the magnetic field is defined as
\[
{\Tr}^+ (B(x))=\sum_{\substack{\lambda_j(x)>0\\ i\lambda_j(x)\in
\sigma(B(x)) }}\lambda_j(x)=\frac{1}{2}\Tr([B^*(x)\cdot
B(x)]^{1/2}).
\]
It turns out that in many problems the function $x\mapsto h\cdot
{\Tr}^+ (B(x))$ can be considered as a magnetic potential, that is,
as a magnetic analog of the electric potential $V$ in a
Schr\"odinger operator $-h^2\Delta+V$.

We will also use the trace norm of $B(x)$:
\[
|B(x)|=[\Tr (B^*(x)\cdot B(x))]^{1/2}.
\]
It coincides with the norm of $B(x)$ with respect to the Riemannian
metric on the space of linear operators on $T_xM$ induced by the
Riemannian metric $g$ on $M$.

In this paper we will always assume that the magnetic field has a
periodic set of compact potential wells. More precisely, put
\[
b_0=\min \{{\Tr}^+ (B(x))\, :\, x\in { M}\}
\]
and assume that there exist a (connected) fundamental domain $\cF$
and a constant $\epsilon_0>0$ such that
\begin{equation}\label{YK:tr1}
  {\Tr}^+ (B(x)) \geq b_0+\epsilon_0, \quad x\in \partial\cF.
\end{equation}
For any $\epsilon_1 \leq \epsilon_0$, put
\[
U_{\epsilon_1} = \{x\in \cF\,:\, {\Tr}^+ (B(x)) < b_0+ \epsilon_1\}.
\]
Thus $U_{\epsilon_1}$ is an open subset of $\cF$ such that
$U_{\epsilon_1}\cap \partial\cF=\emptyset$ and, for $\epsilon_1 <
\epsilon_0$, $\overline{U_{\epsilon_1}}$ is compact and included in
the interior of $\cF$. Any connected component of $U_{\epsilon_1}$
with $\epsilon_1 < \epsilon_0$ and also any its translation under
the action of an element of $\Gamma$ can be understood as a magnetic
well. These magnetic wells are separated by potential barriers,
which are getting higher and higher when $h\to 0$ (in the
semiclassical limit).

We will consider the magnetic Schr\"odinger operator $H^h$ as an
unbounded self-adjoint operator in the Hilbert space $L^2(M)$ and
study gaps in the spectrum of this operator, which are located below
the top of potential barriers, that is, on the interval $[0,
h(b_0+\epsilon_0)]$. Here by a gap in the spectrum $\sigma(T)$ of a
self-adjoint operator $T$ in a Hilbert space we will mean any
connected component of the complement of $\sigma(T)$ in $\RR$, that
is, any maximal interval $(a,b)$ such that $(a,b)\cap \sigma(T) =
\emptyset $.

The problem of existence of gaps in the spectra of second order
periodic differential operators has been extensively studied
recently. Some related results on spectral gaps for periodic
magnetic Schr\"odinger operators can be found for example
in~\cite{BDP,gaps,diff2006,HS88,HSLNP345,HempelHerbst95,HempelPost02,HerbstNakamura,Ko04,bedlewo,KMS,MS,Nakamura95}
(see also the references therein).

In our case, the important role is played by the tunneling effect,
that is, by the possibility for the quantum particle described by
the Hamiltonian $H^h$ with such an energy to pass through a
potential barrier. Using the semiclassical analysis of the tunneling
effect, we showed in \cite{gaps} that the spectrum of the magnetic
Schr\"odinger operator $H^h$ on the interval is localized in an
exponentially small neighborhood of the spectrum of its Dirichlet
realization inside the wells. This result reduces the investigation
of some gaps in the spectrum of the operator $H^h$ to the study of
the eigenvalue distribution for a ``one-well'' operator and leads us
to suggest a general scheme of a proof of existence of spectral gaps
in \cite{diff2006}. We disregard in this paper the analysis of the
spectrum in the above mentioned exponentially small neighborhoods.

We consider the case when $b_0=0$ and the zero set of the magnetic
field has regular hypersurface pieces. More precisely, suppose that
there exists $x_0\in M$ such that $\mathbf B(x_0)=0$ and in a
neighborhood $U$ of $x_0$ the zero set of $\mathbf B$ is a smooth
oriented hypersurface $S$, and, moreover, there are constants $k\in
\ZZ$, $k>0$, and $C>0$ such that, for all $x\in U$, we have:
\begin{equation}\label{YK:B1}
C^{-1}d(x,S)^k\leq |B(x)|  \leq C d(x,S)^k\,.
\end{equation}

On compact manifolds, such a model was introduced for the first time
by Montgomery~\cite{Mont} and was further studied in
\cite{HM,Pan-Kwek,syrievienne,Qmath10}.

Denote by $N$ the external unit normal vector to $S$ and by
$\tilde{N}$ an arbitrary extension of $N$ to a smooth vector field
on $U$. Let $\omega_{0,1}$ be the smooth one form on $S$ defined,
for any vector field $V$ on $S$, by the formula
\[
\langle V,\omega_{0,1}\rangle(y)=\frac{1}{k!}\tilde{N}^k({\mathbf
B}(\tilde{N},\tilde{V}))(y), \quad y\in S,
\]
where $\tilde{V}$ is a $C^\infty$ extension of $V$ to $U$. By
\eqref{YK:B1}, it is easy to see that $\omega_{0,1}(x)\not=0$ for
any $x\in S$. Denote
\[
\omega_{\mathrm{min}}(B)=\inf_{x\in S} |\omega_{0,1}(x)|>0.
\]

For any $\alpha\in \RR$, denote by $\lambda_0(\alpha)$ the bottom of
the self-adjoint second order differential operator
\begin{equation}\label{e:Q0}
-\frac{d^2 }{d t^2}+ \left(\frac{t^{k+1}}{k+1}- \alpha \right)^2
\end{equation}
in $L^2(\RR)$. Put $\hat{\nu}:=\inf_{\alpha\in
\RR}\lambda_0(\alpha)$. It is clear that $\hat \nu \geq 0$. We
refer the reader to Section~\ref{s:model} for more properties.

In \cite{Qmath10}, we have proved the following result.

\begin{theorem}\label{YK:hypersurface}
For any $a$ and $b$ such that
\[
\hat{\nu}\, \omega_{\mathrm{min}}(B)^{\frac{2}{k+2}}<a < b
\]
and for any natural $N$, there exists $h_0>0$ such that, for any
$h\in (0,h_0]$, the spectrum of $H^h$ in the interval
\[
[h^{\frac{2k+2}{k+2}}a, h^{\frac{2k+2}{k+2}}b]
\]
has at least $N$ gaps.
\end{theorem}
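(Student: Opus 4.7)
The plan is to follow the two-stage scheme developed in \cite{gaps,diff2006}: first reduce to the Dirichlet problem in a single magnetic well, and then analyze that one-well problem by a semiclassical reduction to a model on the hypersurface $S$.

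\textbf{Stage 1 (periodic-to-one-well reduction).} By the tunneling estimate of \cite{gaps}, the spectrum of $H^h$ in the window $I_h:=[ah^{\frac{2k+2}{k+2}},bh^{\frac{2k+2}{k+2}}]$, which for small $h$ sits far below the barrier height $h\epsilon_0$, is contained in an exponentially small neighborhood of the spectrum of the Dirichlet realization $H^h_{U}$ of $H^h$ on a single well $U=U_{\epsilon_1}$; $\Gamma$-periodicity implies all translates of $U$ contribute the same Dirichlet spectrum. Hence it suffices to produce at least $N+1$ distinct eigenvalues of $H^h_U$ in $I_h$, with consecutive ones separated by gaps of \emph{polynomial} size in $h$: the corresponding bands of $\sigma(H^h)$ are exponentially thin and therefore leave at least $N$ open gaps between them.

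\textbf{Stage 2 (semiclassical analysis of the one-well operator).} First, an Agmon-type estimate based on the effective magnetic potential $h\,\Tr^+(B(x))$ shows that any eigenfunction of $H^h_U$ with eigenvalue in $I_h$ is concentrated in an $h^{1/(k+2)}$-tube about $S$. In Fermi coordinates $(t,s)$ on that tube, using $|B(x)|\sim |t|^k|\omega_{0,1}(s)|$, the operator has, up to lower order, the form
\[
h^2 D_t^2+\Bigl(-ih\,d_s+\frac{t^{k+1}}{k+1}\omega_{0,1}(s)\Bigr)^2+\text{lower order}.
\]
The rescaling $t=h^{1/(k+2)}\tau$ extracts the prefactor $h^{\frac{2k+2}{k+2}}$ and displays, on each fibre over $(s,\sigma)\in T^*S$, exactly the model operator \eqref{e:Q0} with parameter $\alpha=\alpha(s,\sigma)$ linear in the tangential momentum $\sigma$ and in $\omega_{0,1}(s)$. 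A Grushin/Born--Oppenheimer reduction onto the fibrewise ground state then produces an effective semiclassical operator on $S$ whose principal symbol attains its infimum $\hat{\nu}\,\omega_{\mathrm{min}}(B)^{2/(k+2)}$. Since $a>\hat{\nu}\,\omega_{\mathrm{min}}(B)^{2/(k+2)}$, the phase-space region $\{a\le\text{principal symbol}\le b\}$ has positive symplectic volume in $T^*S$, so a Weyl-type lower bound on the eigenvalue count gives at least $N+1$ eigenvalues in $I_h$ for $h$ small, with consecutive ones separated by a polynomial power of $h$, as needed.

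\textbf{Main obstacle.} The delicate step is the quantitative Born--Oppenheimer reduction: one must show that the fibrewise ground state $u_0(\tau;s,\sigma)$ and its eigenvalue $\lambda_0(\alpha(s,\sigma))$ depend smoothly enough on $(s,\sigma)$—despite the non-analytic behavior of $\alpha\mapsto\lambda_0(\alpha)$ at its minimum inherent to Montgomery-type models—and push the effective-operator construction to high enough order that its symbolic remainder is dominated by the spacing between consecutive effective eigenvalues, uniformly over energies in $I_h$. Combined with the Agmon tail estimates required to justify the truncation to the tube around $S$, this is where essentially all the technical work sits.
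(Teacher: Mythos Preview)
The present paper does not actually prove Theorem~\ref{YK:hypersurface}; it is quoted from the authors' earlier work \cite{Qmath10}, so there is no in-paper proof to compare against directly. Nonetheless the surrounding machinery makes the intended argument clear, and your sketch diverges from it at one substantive point.

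Your two-stage architecture---tunneling reduction to a one-well Dirichlet problem via \cite{gaps}, then a normal form near $S$ yielding the model operator $H^{h,0}$ of \cite{HM}---is exactly right and matches Section~3 here. The gap is in your Stage~2 endgame: a Weyl-type count on the effective operator may give many eigenvalues in $I_h$, but your assertion that ``consecutive ones [are] separated by a polynomial power of $h$'' does not follow. Weyl counting controls the \emph{number} of eigenvalues, not individual spacings; some could be degenerate or exponentially close, and you have supplied no argument ruling this out. The paper's route (via \cite{diff2006}, restated here as Theorem~\ref{t:abstract}) sidesteps this entirely: one \emph{constructs} $N+1$ explicit quasimodes for $H^h_D$ with approximate eigenvalues $\mu_0^h<\cdots<\mu_N^h$ chosen in advance inside $I_h$, with separation $ch^M$ and quasimode error $o(h^M)$. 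The separation is thus built in rather than deduced post hoc from a count. For Theorem~\ref{YK:hypersurface} the relevant quasimodes are produced from eigenfunctions of the model $H^{h,0}$, which has discrete spectrum by \cite[Theorem~2.7]{HM}; the construction referenced at the start of Section~3 here (``approximate eigenfunctions of $H^{h,0}$ given in \cite{Qmath10}'') is precisely this.

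A minor correction: $\alpha\mapsto\lambda_0(\alpha)$ is in fact smooth (indeed real-analytic), since the ground state of $Q(\alpha,1)$ is simple; there is no ``non-analytic behavior at its minimum.'' What is open (Conjecture~\ref{c:main}) is only whether that minimum is non-degenerate, and that question plays no role in Theorem~\ref{YK:hypersurface}.
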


In this paper we will concentrate our analysis on the region close
to the bottom of the spectrum. First, we state upper and lower
estimates for the bottom $\lambda_0(H^h)$ of the spectrum of the
operator $H^h$ in $L^2(M)$

\begin{theorem}\label{t:estimate}
Suppose that the operator $H^h$ satisfies condition \eqref{YK:tr1}
with some $\epsilon_0>0$, and that the zero set of the magnetic field
$\mathbf B$ is a smooth oriented hypersurface $S$. Moreover, assume
that there are some $k\in \ZZ, k>0$ and $C>0$ such that, for all $x$
in a neighborhood $U$ of $S$, we have:
\[
C^{-1}d(x,S)^k\leq |B(x)|  \leq C d(x,S)^k\,.
\]
Then there exist $C>0$ and $h_0>0$ such that, for any $h\in (0,h_0)$,
we have
\[
\hat{\nu}\,
\omega_{\mathrm{min}}(B)^{\frac{2}{k+2}}h^{\frac{2k+2}{k+2}} - C
h^{\frac{6k+8}{3(k+2)}} \leq \lambda_0(H^h) \leq \hat{\nu}\,
\omega_{\mathrm{min}}(B)^{\frac{2}{k+2}}h^{\frac{2k+2}{k+2}} + C
h^{\frac{6k+8}{3(k+2)}}.
\]
\end{theorem}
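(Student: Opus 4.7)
Both bounds are obtained by a reduction to the one-dimensional model operator \eqref{e:Q0}: the upper bound via an explicit quasi-mode localized near a minimizing point of $|\omega_{0,1}|$ on $S$, the lower bound via an IMS-type partition combined with freezing of the tangential coefficients of the magnetic field.

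For the upper bound, I would pick $x_0\in S$ with $|\omega_{0,1}(x_0)|=\omega_{\mathrm{min}}(B)$ (attained because $|\omega_{0,1}|$ is $\Gamma$-invariant and $S/\Gamma$ is compact). In local coordinates $(y,t)$ with $t$ the signed distance to $S$, I would pass to a Coulomb-type gauge $A_t\equiv 0$ and further gauge away $A|_S$, which is permissible since $\mathbf{B}|_S=0$ makes $A|_S$ a closed $1$-form on $S$. Integrating $\partial_t A_j(y,t)=-B_{jt}(y,t)=O(t^k)$ twice yields the normal form
\[
A_j(y,t)=\frac{t^{k+1}}{k+1}\,\omega_j(y)+O(t^{k+2}),
\]
with $\omega_j$ the components of $\omega_{0,1}$ up to a universal factor. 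Let $\alpha^{\ast}$ achieve $\hat{\nu}=\lambda_0(\alpha^{\ast})$, with corresponding normalized eigenfunction $u_{\alpha^{\ast}}$. I would use a trial function of the form
\[
\psi(y,t)=u_{\alpha^{\ast}}\!\bigl(c\,t\,h^{-1/(k+2)}\bigr)\,\phi\!\bigl(y\,h^{-1/(2(k+2))}\bigr)\,e^{i\xi^{\ast}\cdot y/h},
\]
where $c=|\omega_{0,1}(x_0)|^{1/(k+2)}$, $\phi$ is a compactly supported tangential cutoff, and $\xi^{\ast}$ implements the shift to $\alpha^{\ast}$ after the rescaling $t=h^{1/(k+2)}\tau$. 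The Rayleigh quotient $\langle H^h\psi,\psi\rangle/\|\psi\|^2$ evaluates to the leading term $\hat{\nu}\,\omega_{\mathrm{min}}(B)^{2/(k+2)}h^{(2k+2)/(k+2)}$, with errors arising from the $O(t^{k+2})$ remainder in $A_j$, from the curvature of $g$, and from the tangential spread at a critical point of $|\omega_{0,1}|^2$; each of these is $O(h^{(2k+3)/(k+2)})$ or smaller, hence absorbed in $Ch^{(6k+8)/(3(k+2))}$.

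For the lower bound, I would use an IMS partition of unity $\{\chi_\nu^2\}$ subordinate to balls of radius $h^\beta$ with $\beta=\tfrac{2}{3(k+2)}$, so that $h^2\sum|\nabla\chi_\nu|^2=O(h^{2-2\beta})=O(h^{(6k+8)/(3(k+2))})$. On balls at distance $\ge\delta_0>0$ from $S$ the hypothesis gives $|B|\ge c>0$, and the standard lower bound $H^h\ge h\,\Tr^+(B)-O(h^2)$ yields $\chi_\nu H^h\chi_\nu\ge c_0 h\,\chi_\nu^2$, much stronger than needed. On a ball intersecting $S$, pick $y_\nu\in S$ in the ball; the variation of $\omega_{0,1}$ on the ball is $O(h^\beta)$, so freezing $\omega_{0,1}$ at $y_\nu$ costs $O(h^{\beta+(2k+2)/(k+2)})=O(h^{(6k+8)/(3(k+2))})$. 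With $\omega_{0,1}$ frozen, the same normal form applies; a partial Fourier transform in $y$ together with the rescaling $t=h^{1/(k+2)}\tau$ shows that each fiber of $H^h$ is bounded below, modulo the stated errors, by $\hat{\nu}\,|\omega_{0,1}(y_\nu)|^{2/(k+2)}h^{(2k+2)/(k+2)}$, and $|\omega_{0,1}(y_\nu)|\ge\omega_{\mathrm{min}}(B)$ gives the claim after summing $\sum_\nu\chi_\nu H^h\chi_\nu$.

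The exponent $\beta=\tfrac{2}{3(k+2)}$ is dictated by balancing the IMS cost $h^{2-2\beta}$ against the freezing cost $h^{\beta+(2k+2)/(k+2)}$; both are then $h^{(6k+8)/(3(k+2))}$. The main technical obstacle will be, in the lower bound step, uniformly in $y_\nu\in S$ converting the frozen operator on a geodesic ball in $M$ into an honest lower bound by the one-dimensional model \eqref{e:Q0}: this requires simultaneous control of the Riemannian curvature corrections to $H^h$, of the $O(t^{k+2})$ remainder in $A_j$, and of the replacement of the full tangential Fourier reduction on $\RR^{n-1}$ by its localized version on a ball of radius $h^\beta$. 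Each of these error sources must be shown to stay within $O(h^{(6k+8)/(3(k+2))})$ uniformly in $y_\nu$, which is the heart of the work.
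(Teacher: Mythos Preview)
Your overall strategy---quasi-mode for the upper bound, IMS localization plus freezing of $\omega_{0,1}$ for the lower bound---matches the paper's, and your choice $\beta=\tfrac{2}{3(k+2)}$ is exactly the optimal balance. But there is a genuine gap in the lower bound, precisely at the step you pass over most quickly.

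You assert that freezing $\omega_{0,1}$ on a ball of radius $h^\beta$ costs $O(h^{\beta+(2k+2)/(k+2)})$. The perturbation of the magnetic potential is $\delta A\sim h^\beta\, t^{k+1}$, so after a Young inequality $|a+b|^2\ge(1-\varepsilon)|a|^2-\varepsilon^{-1}|b|^2$ the error term is $\varepsilon^{-1}h^{2\beta}\,\bigl\||t|^{k+1}u\bigr\|^2$. To arrive at the stated cost you need
\[
\bigl\||t|^{k+1}u\bigr\|\le Ch^{(k+1)/(k+2)}\|u\|
\]
for the ground state (indeed for any low-energy state). This normal-decay estimate is \emph{not} a consequence of your isotropic localization: on a ball of radius $h^\beta$ touching $S$ you only know $|t|\lesssim h^\beta$, hence $\||t|^{k+1}u\|\lesssim h^{\beta(k+1)}\|u\|$, and since $\beta(k+1)=\tfrac{2(k+1)}{3(k+2)}<\tfrac{k+1}{k+2}$ this is strictly weaker. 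Running the optimization with this weaker bound, the freezing error fails to reach $O(h^{(6k+8)/(3(k+2))})$ for any $k\ge1$.

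The paper avoids this by two devices you do not mention. First, it reduces (via \cite{gaps} and Theorem~2.7 of \cite{HM}) to a model operator $H^{h,0}$ on $\RR_t\times S$ and then localizes \emph{only tangentially} on $S$ at scale $h^\beta$, leaving the $t$-variable unconfined. Second, it invokes the a~priori estimate from \cite[Remark~5.2]{HM}: for any eigenfunction $u^h$ of $H^{h,0}$ with eigenvalue $\le \widehat{C}h^{(2k+2)/(k+2)}$ one has $\||t|^{k+1}u^h\|\le C_kh^{(k+1)/(k+2)}\|u^h\|$. This Agmon-type normal decay is the missing ingredient; combined with the Young parameter $\varepsilon=h^{2\rho}$, $\rho=\tfrac{1}{3(k+2)}$, it yields exactly the freezing cost you claimed. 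Your list of ``main technical obstacles'' (curvature corrections, the $O(t^{k+2})$ remainder in $A$, localized versus global tangential Fourier reduction) omits this item, yet it is the crux of the argument. A secondary lacuna: your dichotomy ``balls at distance $\ge\delta_0$ from $S$'' versus ``balls intersecting $S$'' leaves out balls at distance in $(h^\beta,\delta_0)$ from $S$; these must also be handled via the normal form, not via the crude bound $H^h\ge h\,\Tr^+(B)$.
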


A similar result was obtained for the bottom of the spectrum of the
Neumann realization of the operator $H^h$ in a bounded domain in
$\RR^2$ by Pan and Kwek~\cite{Pan-Kwek} in the case $k=1$ and by
Aramaki \cite{Aramaki05} in the case $k$ arbitrary odd.

As an immediate consequence of Theorems~\ref{YK:hypersurface}
and~\ref{t:estimate}, we obtain the following statement.

\begin{corollary}
In addition to the assumptions of Theorem~\ref{t:estimate}, suppose
that $M$ is compact. Denote by $\lambda_0(H^h)\leq
\lambda_1(H^h)\leq \lambda_2(H^h)\leq \ldots$ the eigenvalues of the
operator $H^h$ in $L^2(M)$. Then, for integer $m\geq 0$, we have
\[
\lim_{h\to 0} h^{-\frac{2k+2}{k+2}}\lambda_m(H^h)=\hat{\nu}\,
\omega_{\mathrm{min}}(B)^{\frac{2}{k+2}}.
\]
\end{corollary}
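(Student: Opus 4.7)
The plan is to prove matching lower and upper bounds by combining Theorem~\ref{t:estimate} with Theorem~\ref{YK:hypersurface}, using discreteness of the spectrum of $H^h$ on the compact manifold $M$. Throughout, write $\nu:=\hat{\nu}\,\omega_{\mathrm{min}}(B)^{2/(k+2)}$ and $\alpha(h):=h^{(2k+2)/(k+2)}$.

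The lower bound is immediate. Since $\lambda_m(H^h)\geq\lambda_0(H^h)$ by the min-max principle, the lower estimate of Theorem~\ref{t:estimate} yields
\[
\alpha(h)^{-1}\lambda_m(H^h)\;\geq\;\nu\;-\;C\,h^{\frac{6k+8}{3(k+2)}-\frac{2k+2}{k+2}}\;=\;\nu\;-\;C\,h^{\frac{2}{3(k+2)}},
\]
so $\liminf_{h\to 0}\alpha(h)^{-1}\lambda_m(H^h)\geq\nu$.

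For the upper bound, fix $\epsilon>0$ and set $a=\nu+\epsilon/2$, $b=\nu+\epsilon$. I would apply Theorem~\ref{YK:hypersurface} with $N=m+1$: for all sufficiently small $h$, the spectrum of $H^h$ has at least $m+1$ distinct gaps intersecting the interval $I_h:=[\alpha(h)a,\alpha(h)b]$. Because $M$ is compact, $\sigma(H^h)$ is purely discrete, so the gaps are exactly the open intervals $(\mu_j,\mu_{j+1})$ between consecutive distinct eigenvalues $\mu_0<\mu_1<\cdots$ (plus $(-\infty,\mu_0)$). Ordering the $m+1$ gaps meeting $I_h$ as $(\mu_{j_1},\mu_{j_1+1}),\ldots,(\mu_{j_{m+1}},\mu_{j_{m+1}+1})$ with $j_1<\cdots<j_{m+1}$, the $m$ separating eigenvalues $\mu_{j_1+1}<\cdots<\mu_{j_m+1}$ all satisfy $\alpha(h)a<\mu_{j_k+1}\leq\mu_{j_{k+1}}<\alpha(h)b$ (the left inequality because the $k$th gap has its right endpoint $\mu_{j_k+1}>\alpha(h)a$, the right inequality because the $(k{+}1)$st gap has its left endpoint $\mu_{j_{k+1}}<\alpha(h)b$). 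Thus $H^h$ has at least $m$ eigenvalues, counted with multiplicity, in $I_h$.

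Combining this with the upper estimate of Theorem~\ref{t:estimate},
\[
\lambda_0(H^h)\;\leq\;\alpha(h)\bigl(\nu+C\,h^{2/(3(k+2))}\bigr)\;<\;\alpha(h)a\quad\text{for } h\text{ small,}
\]
we conclude that $H^h$ has at least $m+1$ eigenvalues below $\alpha(h)b$, i.e.\ $\lambda_m(H^h)\leq\alpha(h)(\nu+\epsilon)$. Letting $\epsilon\to 0$ gives $\limsup_{h\to 0}\alpha(h)^{-1}\lambda_m(H^h)\leq\nu$, which together with the lower bound proves the corollary. The only nontrivial point is the index-chase in the previous paragraph, translating \emph{at least $N$ gaps meet $I_h$} into \emph{at least $N-1$ eigenvalues lie in $I_h$}; everything else is a direct invocation of Theorems~\ref{t:estimate} and~\ref{YK:hypersurface}.
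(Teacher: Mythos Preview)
Your proof is correct and follows exactly the route the paper indicates, namely deducing the corollary from Theorems~\ref{YK:hypersurface} and~\ref{t:estimate}; the paper states the corollary as an immediate consequence of these two theorems without further detail, and you have supplied that detail. One minor expositional point: the fact that $\lambda_0(H^h)<\alpha(h)a$ (which you invoke at the end) is also what guarantees that none of the $m+1$ gaps meeting $I_h$ is the unbounded gap $(-\infty,\mu_0)$, so you may want to state it before the index chase.
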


In the case when $k=1$ and $|\omega_{0,1}(x)|$ is constant along
$S$, this result was obtained by Montgomery \cite{Mont}. In Theorem
1.4, under some additional assumptions, more generic than in
\cite{Mont}, we obtain stronger estimates for the eigenvalues of the
one-well problem.

Like in the case of the Schr\"odinger operator with electric
potential (see \cite{HelSj5}), one can introduce an internal notion
of magnetic well for a fixed hypersurface $S$ in the zero set of the
magnetic field $\mathbf B$. Such magnetic wells can be naturally
called magnetic miniwells. They are defined by means of the function
$|\omega_{0,1}|$ on $S$. Assuming that there exists a non-degenerate
miniwell on $S$, we prove the existence of gaps in the spectrum of
$H^h$ on intervals of size $h^{\frac{2k+3}{k+2}}$, close to the
bottom $\lambda_0(H^h)$.

\begin{theorem}\label{t:gaps}
Suppose that the operator $H^h$ satisfies condition \eqref{YK:tr1}
with some $\epsilon_0>0$, and that there exists $x_0\in M$ such that
$\mathbf B(x_0)=0$ and in a neighborhood $U$ of $x_0$ the zero set
of $\mathbf B$ is a smooth oriented hypersurface $S$. Suppose
moreover that
there are constants $k\in \ZZ, k>0$ and $C>0$ such that for all
$x\in U$ we have:
\[
C^{-1}d(x,S)^k\leq |B(x)|  \leq C d(x,S)^k\,.
\]
Assume finally that there exists $x_1\in S$ such that $|\omega_{0,1}(x_1)|=
\omega_{\mathrm{min}}(B)$ and, for all $x\in S$ in some neighborhood
of $x_1$
\[
C_1d_S(x,x_1)^2  \leq  |\omega_{0,1}(x)|-\omega_{\mathrm{min}}(B)\leq
C_2d_S(x,x_1)^2.
\]
Then, for any natural $N$, there exist $b_N>0$ and $h_N>0$ such that
the spectrum of $H^h$ in the interval
\[
\left[\hat{\nu}\,
\omega_{\mathrm{min}}(B)^{\frac{2}{k+2}}h^{\frac{2k+2}{k+2}},
\hat{\nu}\,
\omega_{\mathrm{min}}(B)^{\frac{2}{k+2}}h^{\frac{2k+2}{k+2}} +
b_Nh^{\frac{2k+3}{k+2}}\right]
\]
has at least $N$ gaps for any $h\in (0,h_N)$.
\end{theorem}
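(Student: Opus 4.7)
My plan follows the scheme outlined in \cite{diff2006} and used in \cite{gaps,Qmath10}. For $\epsilon_1$ slightly below $\epsilon_0$, let $V$ be the connected component of $U_{\epsilon_1}$ containing $x_1$. The Agmon-type tunneling estimates of \cite{gaps} show that the spectrum of $H^h$ in $[0,h(b_0+\epsilon_1)]$ is exponentially close to the spectrum of the Dirichlet realization $H^h_V$ of $H^h$ inside $V$. It thus suffices to produce $N+1$ consecutive eigenvalues of $H^h_V$ inside the stated interval with successive spacings of order $h^{(2k+3)/(k+2)}$, since these spacings dwarf any $O(e^{-c/h})$ tunneling correction and hence remain genuine gaps of $H^h$.

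\textbf{Normal rescaling and Grushin reduction.}
In tubular coordinates $(s,t)\in S\times(-\delta,\delta)$ around $x_1$, using $H^1(M,\RR)=0$, I can choose a gauge in which ${\mathbf A}=A_s(s,t)\,ds$ with $A_s(s,t)=\frac{t^{k+1}}{k+1}\omega_{0,1}(s)+O(t^{k+2})$. Rescaling $t=h^{1/(k+2)}\tau$ and factoring out the leading scale $h^{(2k+2)/(k+2)}$ transforms $H^h$, to leading order, into
\[
h^{(2k+2)/(k+2)}\Bigl[-\partial_\tau^2+\bigl(\tilde h D_s+\tfrac{\tau^{k+1}}{k+1}\omega_{0,1}(s)\bigr)^2\Bigr],\qquad \tilde h:=h^{1/(k+2)}.
\]
For frozen $s$, the transverse operator is unitarily equivalent to $|\omega_{0,1}(s)|^{2/(k+2)}$ times the model \eqref{e:Q0} with a parameter $\alpha$ built from $s,\xi,\omega_{0,1}(s)$. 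Since $\lambda_0(\alpha)$ is a simple, isolated eigenvalue of this model (Section~\ref{s:model}), a Grushin/Born--Oppenheimer reduction in the spirit of \cite{HelSj5} yields an effective $\tilde h$-pseudodifferential operator $P_{\mathrm{eff}}$ on $S$ whose principal symbol is
\[
p_0(s,\xi)=|\omega_{0,1}(s)|^{2/(k+2)}\lambda_0(\alpha(s,\xi)).
\]

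\textbf{Miniwell harmonic approximation.}
The symbol $p_0$ attains its minimum $\hat\nu\,\omega_{\mathrm{min}}(B)^{2/(k+2)}$ at $s=x_1$ and $\alpha=\alpha^\star$, the minimizer of $\lambda_0$. The quadratic lower bound on $|\omega_{0,1}(\cdot)|-\omega_{\mathrm{min}}(B)$ at $x_1$ assumed in the theorem, together with the non-degeneracy of $\alpha^\star$ (from Section~\ref{s:model}), makes the Hessian of $p_0$ at this minimum positive definite. The standard semiclassical harmonic approximation for $\tilde h$-pseudodifferential operators then gives eigenvalues of $P_{\mathrm{eff}}$ of the form $\hat\nu\,\omega_{\mathrm{min}}(B)^{2/(k+2)}+\tilde h\,\mu_j+O(\tilde h^2)$, where $\mu_0<\mu_1<\cdots$ are the eigenvalues of the resulting quadratic oscillator on $T^\star_{x_1}S$. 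Unscaling, the first $N+1$ eigenvalues of $H^h_V$ are located at
\[
\hat\nu\,\omega_{\mathrm{min}}(B)^{2/(k+2)}h^{(2k+2)/(k+2)}+\mu_j h^{(2k+3)/(k+2)}+o(h^{(2k+3)/(k+2)}),\quad j=0,\dots,N,
\]
and choosing $b_N>\mu_N+1$ produces the $N$ asserted gaps.

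\textbf{Main obstacle.}
The hardest point is justifying the Grushin reduction in a genuinely two-scale semiclassical regime: the tangential calculus runs on $\tilde h=h^{1/(k+2)}$ while the miniwell harmonic oscillator introduces a further scale $\tilde h^{1/2}$. The gauge choice, the Taylor expansion of $A_s(s,t)$, the curvature of $S$, and the $O(t^{k+2})$ remainder must be tracked through the reduction so that each perturbs the effective symbol only at the $o(\tilde h)$ level. This requires a careful two-parameter Weyl calculus combined with Agmon decay (in $\tau$) of the fibre ground state and the properties of $\alpha\mapsto\lambda_0(\alpha)$ recalled in Section~\ref{s:model}; a secondary but nontrivial task is verifying that $\alpha^\star$ is indeed a non-degenerate minimum for every admissible $k$.
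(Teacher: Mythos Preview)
Your overall architecture---tunneling reduction to a Dirichlet well, Grushin/Born--Oppenheimer reduction to an effective tangential operator, then a harmonic-oscillator approximation near the miniwell---is indeed the paper's strategy. But there is a genuine gap in your argument, and it is precisely the point you label ``secondary'' at the end.

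You assert that the Hessian of $p_0(s,\xi)=|\omega_{0,1}(s)|^{2/(k+2)}\lambda_0(\alpha(s,\xi))$ is positive definite at the minimum because of (i) the assumed quadratic growth of $|\omega_{0,1}|$ near $x_1$ and (ii) ``the non-degeneracy of $\alpha^\star$ (from Section~\ref{s:model})''. Item (ii) is exactly Conjecture~\ref{c:main}: Section~\ref{s:model} does \emph{not} prove $\lambda_0''(\alpha_{\min})>0$; it only gives numerical evidence and a conditional argument valid for $k\leq 7$. If $\lambda_0''(\alpha_{\min})=0$, your symbol $p_0$ is degenerate in the $\xi$-direction aligned with $\omega_{0,1}(x_1)$, the standard harmonic approximation fails, and your eigenvalue asymptotics collapse. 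The theorem, however, is stated for arbitrary $k$ and must not rely on an open conjecture.

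The paper circumvents this as follows. Rather than building an effective $\tilde h$-pseudodifferential operator on $S$ and invoking a harmonic approximation, it rescales \emph{both} variables, $t=h^{1/(k+2)}\tau$ and $s=h^{1/(2(k+2))}\sigma$, and expands directly in powers of $h^{1/(2(k+2))}$. The Grushin reduction then yields a \emph{concrete} second-order operator $K$ on $\RR^{n-1}$ (equation~\eqref{e:defK}), whose Laplacian part is $\tfrac12\lambda_0''(\alpha_{\min})\Delta_\omega+\Delta_{\omega^\perp}$. Two cases are then handled separately. If $\lambda_0''(\alpha_{\min})>0$, $K$ has discrete spectrum and one proceeds as you outline. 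If $\lambda_0''(\alpha_{\min})=0$, $K$ has continuous spectrum $[\lambda_0(K_0),\infty)$, but one can still build quasimodes of $K$ at \emph{any} prescribed level $\lambda\geq\lambda_0(K_0)$ by Gaussian wave packets in the degenerate direction (Lemma~\ref{l:quasimodesK}). Crucially, the abstract gap criterion (Theorem~\ref{t:abstract}) needs only \emph{approximate} eigenvalues $\mu_j^h$ with error $o(h^M)$ and spacing $\geq ch^M$; it does not require locating true eigenvalues of $H^h_D$. This is why the paper speaks of quasimodes throughout Theorem~\ref{YK:l1}, whereas your reduction ``it thus suffices to produce $N+1$ consecutive eigenvalues of $H^h_V$'' is both stronger than necessary and unachievable in the degenerate case.

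In short: either supply an unconditional proof of Conjecture~\ref{c:main}, or restructure the argument to produce quasimodes (not eigenvalues) and treat the degenerate case via localized wave packets in the flat direction of the effective operator.
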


As an immediate consequence of Theorem~\ref{t:gaps}, we also obtain
upper bounds for the eigenvalues of the one-well problem.

\begin{theorem}\label{t:upperbounds}
Suppose that $M$ is a compact manifold of dimension $n\geq 2$.
Assume that the zero set of $\mathbf B$ is a smooth oriented
hypersurface $S$, and there are constants $k\in \ZZ$, $k>0$, and
$C>0$ such that for all $x\in S$ we have:
\[
C^{-1}d(x,S)^k\leq |B(x)|  \leq C d(x,S)^k\,.
\]
Assume that there exists $x_1\in S$ such that $|\omega_{0,1}(x_1)|=
\omega_{\mathrm{min}}(B)$ and, for all $x\in S$ in some neighborhood
of $x_1$, we have the estimate
\[
C_1d_S(x,x_1)^2  \leq  |\omega_{0,1}(x)|-\omega_{\mathrm{min}}(B)\leq
C_2d_S(x,x_1)^2.
\]

Denote by $\lambda_0(H^h)\leq \lambda_1(H^h)\leq \lambda_2(H^h)\leq
\ldots$ the eigenvalues of the operator $H^h$. Then, for any natural
$m$, there exist $C_m>0$ and $h_m>0$ such that for any $h\in
(0,h_m)$ we have
\[
\lambda_m(H^h) \leq \hat{\nu}\,
\omega_{\mathrm{min}}(B)^{\frac{2}{k+2}}h^{\frac{2k+2}{k+2}} + C_m
h^{\frac{2k+3}{k+2}}.
\]
\end{theorem}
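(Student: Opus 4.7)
The plan is to derive Theorem~\ref{t:upperbounds} directly from Theorem~\ref{t:gaps} by a short counting argument that uses only the discreteness of $\sigma(H^h)$ on the compact manifold $M$. For brevity, set $\mu(h):=\hat{\nu}\,\omega_{\mathrm{min}}(B)^{\frac{2}{k+2}}h^{\frac{2k+2}{k+2}}$, the leading term appearing in both theorems.

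Fix $m\geq 0$ and apply Theorem~\ref{t:gaps} with $N:=m$: there are constants $b_m,h_m>0$ such that, for every $h\in(0,h_m)$, the set $\sigma(H^h)\cap\bigl[\mu(h),\,\mu(h)+b_m h^{\frac{2k+3}{k+2}}\bigr]$ contains at least $m$ gaps in the sense defined in the introduction. Since $M$ is compact, $\sigma(H^h)$ is discrete, so every gap is a maximal interval $(\nu_i,\nu_{i+1})$ between two consecutive distinct eigenvalues. Writing the $m$ gaps in the window as $(\nu_{i_1},\nu_{i_1+1}),\ldots,(\nu_{i_m},\nu_{i_m+1})$ with $i_1<\cdots<i_m$, the chain
\[
\mu(h)\leq\nu_{i_1}<\nu_{i_1+1}\leq\nu_{i_2}<\cdots\leq\nu_{i_m+1}\leq\mu(h)+b_m h^{\frac{2k+3}{k+2}}
\]
exhibits at least $m+1$ distinct eigenvalues of $H^h$ inside that window (the minimum $m+1$ being attained precisely when all the $i_j$ are consecutive).

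Let $p\geq 0$ count the eigenvalues of $H^h$ lying strictly below $\mu(h)$, with multiplicity. Because the eigenvalues contained in any fixed interval carry consecutive indices in the ordered list $\lambda_0(H^h)\leq\lambda_1(H^h)\leq\cdots$, the eigenvalues identified above include in particular $\lambda_p(H^h),\ldots,\lambda_{p+m}(H^h)$, so
\[
\lambda_{p+m}(H^h)\leq\mu(h)+b_m h^{\frac{2k+3}{k+2}}.
\]
Since $p\geq 0$ gives $\lambda_m(H^h)\leq\lambda_{p+m}(H^h)$, this yields the desired bound
\[
\lambda_m(H^h)\leq\hat{\nu}\,\omega_{\mathrm{min}}(B)^{\frac{2}{k+2}}h^{\frac{2k+2}{k+2}}+b_m h^{\frac{2k+3}{k+2}},
\]
with $C_m:=b_m$.

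\textbf{Main obstacle.} Essentially all the analytic content has already been packaged into Theorem~\ref{t:gaps}; what remains above is pure bookkeeping. The only non-cosmetic step is the implication ``$m$ spectral gaps inside an interval $\Rightarrow$ at least $m+1$ distinct eigenvalues inside that interval,'' which is handled by ordering the $2m$ endpoints of the $m$ disjoint gaps as in the displayed chain. No new semiclassical construction, quasimode, or Agmon-type estimate is required, which is what makes Theorem~\ref{t:upperbounds} an immediate consequence of Theorem~\ref{t:gaps}.
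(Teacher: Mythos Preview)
Your argument is correct and matches the paper's approach: the paper states Theorem~\ref{t:upperbounds} as ``an immediate consequence of Theorem~\ref{t:gaps}'' and gives no further proof, so you have simply written out the counting that makes this consequence explicit. One cosmetic point: since you allow $m=0$, your choice $N:=m$ is vacuous there (zero gaps says nothing); take $N:=m+1$ throughout, or note that the statement's ``natural $m$'' is intended as $m\geq 1$.
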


Under the additional assumption that there exists a unique miniwell,
we believe that, using the methods of \cite{Fournais-Helffer06}, one
can
prove the lower bound for the ground state energy $\lambda_0(H^h)$
of the form
\[
\lambda_0(H^h) \geq \hat{\nu}\,
\omega_{\mathrm{min}}(B)^{\frac{2}{k+2}}h^{\frac{2k+2}{k+2}} - C
h^{\frac{2k+3}{k+2}}
\]
and the upper bound for the splitting between $\lambda_0(H^h)$ and
$\lambda_1(H^h)$ of the form
\[
\lambda_1(H^h)-\lambda_0(H^h)\leq C h^{\frac{2k+3}{k+2}}.
\]
For this, we have to know a certain property of the ground state
energy $\lambda_0(\alpha)$ of the operator \eqref{e:Q0}, which we
state as a conjecture.

\begin{conjecture}\label{c:main}
Any minimum of $\lambda_0(\alpha)$ is non-degenerate, that, is, for
any $\alpha_{\rm min}\in \RR$ such that $\lambda_0(\alpha_{\rm
min})=\hat{\nu}$ we have
\begin{equation}\label{e:conjecture}
\frac{\partial^2 \lambda_0}{\partial \alpha ^2}(\alpha_{\rm min})>0.
\end{equation}
\end{conjecture}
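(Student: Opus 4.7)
The plan is to apply second-order analytic perturbation theory to the family $\{Q_\alpha\}$ with $Q_\alpha := -\frac{d^2}{dt^2} + V_\alpha(t)^2$, where $V_\alpha(t) = \frac{t^{k+1}}{k+1} - \alpha$. These operators form a self-adjoint holomorphic family of type (A) with compact resolvent; the ground state $\lambda_0(\alpha)$ is always simple and isolated, so by Kato--Rellich it is real-analytic in $\alpha$ and admits an analytic $L^2$-normalized ground state $\phi_\alpha$ with $\partial_\alpha\phi_\alpha\perp\phi_\alpha$.

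Using $\partial_\alpha Q_\alpha = -2V_\alpha$ and $\partial_\alpha^2 Q_\alpha = 2$, Hellmann--Feynman gives $\lambda_0'(\alpha) = -2\langle V_\alpha\phi_\alpha,\phi_\alpha\rangle$, so at a minimum $\alpha_{\min}$ we have $V_{\alpha_{\min}}\phi_{\alpha_{\min}}\perp\phi_{\alpha_{\min}}$. Differentiating the eigen-equation once more yields
\[
(Q_{\alpha_{\min}}-\lambda_0)\,\partial_\alpha\phi_{\alpha_{\min}} = 2\,V_{\alpha_{\min}}\phi_{\alpha_{\min}},
\]
and the standard second-variation formula then reduces \eqref{e:conjecture} to the sharp spectral inequality
\[
\bigl\langle V_{\alpha_{\min}}\phi_{\alpha_{\min}},\,R_{\alpha_{\min}}\,V_{\alpha_{\min}}\phi_{\alpha_{\min}}\bigr\rangle < \tfrac14,
\]
where $R_\alpha := (Q_\alpha-\lambda_0(\alpha))^{-1}$ is the reduced resolvent on $\{\phi_\alpha\}^\perp$. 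An equivalent reformulation comes from splitting $Q_\alpha = \alpha^2 + K_\alpha$ with $K_\alpha$ affine in $\alpha$: the ground-state eigenvalue $\mu_0(\alpha) := \lambda_0(\alpha) - \alpha^2$ of $K_\alpha$ is automatically concave as the infimum of affine functionals, and the conjecture says exactly that $\mu_0''(\alpha_{\min}) > -2$ at every critical point of $\lambda_0$.

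To attack the spectral inequality I would combine commutator/virial sum rules derived from $[Q_\alpha,t]=-2\partial_t$ and $[Q_\alpha,\partial_t]= -2V_\alpha t^k$, which yield controllable low moments of the spectral measure of $Q_{\alpha_{\min}}$ applied to $V_{\alpha_{\min}}\phi_{\alpha_{\min}}$ (for instance, $\|V_\alpha\phi_\alpha\|^2 = \lambda_0(\alpha) - \|\phi_\alpha'\|^2$), with Agmon-type pointwise bounds on $\phi_{\alpha_{\min}}$ away from the turning points $\{t : V_{\alpha_{\min}}(t)=0\}$ and Hermite-rescaled quasi-modes at those turning points in the spirit of Helffer--Mohamed--Nourrigat and Fournais--Helffer. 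The principal obstacle is that the target inequality is \emph{saturated} exactly when \eqref{e:conjecture} fails, so the crude majorant $\sum_{n\geq1}|\langle V\phi_0,\phi_n\rangle|^2/(\lambda_n-\lambda_0)\leq \|V\phi_0\|^2/(\lambda_1-\lambda_0)$ loses an uncontrolled constant, and one must capture the contribution of the low-lying excited states to leading order. A possible detour is to seek a hidden monotonicity or symmetry---noting that $\lambda_0(\alpha)\to+\infty$ as $|\alpha|\to\infty$ by a Persson-type argument, so critical points form a compact set---that forces each critical point to be of order exactly two; identifying such a structure uniformly in $k\geq1$ is the central unresolved difficulty that has so far kept the statement a conjecture.
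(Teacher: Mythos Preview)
Your setup is correct and matches the paper's: the analyticity, the Hellmann--Feynman identity $\lambda_0'(\alpha)=-2\langle V_\alpha\phi_\alpha,\phi_\alpha\rangle$, the equation $(Q_{\alpha_{\min}}-\hat\nu)\partial_\alpha\phi_{\alpha_{\min}}=2V_{\alpha_{\min}}\phi_{\alpha_{\min}}$, and the reduction to the resolvent inequality $\langle V\phi,RV\phi\rangle<\tfrac14$ are exactly what the paper records (its equations for $\partial_\alpha\lambda_0$, $\partial_\alpha^2\lambda_0$, and the relation $(Q-\hat\nu)\partial_\alpha u^0=2Vu^0$). But you should be aware that the paper does \emph{not} prove this statement: it is stated as a conjecture, and the paper only gives a conditional/numerical argument for small $k$. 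So there is no ``paper's proof'' to compare against, and your honest final paragraph---that the sharp inequality is saturated precisely at a degenerate minimum and that no uniform mechanism is known---is an accurate description of the state of affairs.

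Where the paper goes slightly further than your outline is in making the crude majorant quantitative. It uses the exact identity $\|V_{\alpha_{\min}}\phi_{\alpha_{\min}}\|^2=\hat\nu/(k+2)$ (proved by a virial computation, their equation for the $L^2$ norm of $V u^0$), feeds this into the bound you wrote down, and obtains
\[
\frac{\partial^2\lambda_0}{\partial\alpha^2}(\alpha_{\min})\ \ge\ 2-\frac{8\hat\nu}{(\lambda_1-\hat\nu)(k+2)}\,,
\]
so that the sufficient condition $(k+2)\lambda_1>(k+6)\hat\nu$ can be checked numerically; they verify it for $k=1,\dots,7$. For odd $k$ they observe a parity gain: $\phi_\alpha$ and $\partial_\alpha\phi_\alpha$ are even, so $\partial_\alpha\phi_\alpha$ is orthogonal to the (odd) first excited state and one may replace $\lambda_1$ by $\lambda_2$ in the bound. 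These two ingredients---the closed form for $\|V\phi\|^2$ and the parity improvement---are the concrete pieces you could add to sharpen your discussion, but they still do not yield a proof for general $k$. Your proposal is therefore not wrong; it simply does not close the gap, and neither does the paper.
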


Moreover, we believe that one can prove the lower bound for the
splitting between $\lambda_0(H^h)$ and $\lambda_1(H^h)$ of the form
\[
\lambda_1(H^h)-\lambda_0(H^h)\geq C h^{\frac{2k+3}{k+2}},
\]
if, in addition, the following conjecture is true.

\begin{conjecture}\label{c:main2}
There exists a unique $\alpha_{\rm min}\in \RR$ such that
$\lambda_0(\alpha_{\rm min})=\hat{\nu}$.
\end{conjecture}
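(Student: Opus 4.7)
}

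The plan is to combine real-analytic perturbation theory for the one-parameter family~\eqref{e:Q0} with sharp asymptotics as $|\alpha|\to\infty$, with the symmetries of the potential, and with the non-degeneracy supplied by Conjecture~\ref{c:main}. First, since $V_\alpha(t)=(p(t)-\alpha)^2$ with $p(t)=t^{k+1}/(k+1)$ is polynomial in $\alpha$ and the ground state of~\eqref{e:Q0} is simple, Kato's analytic perturbation theory gives that $\alpha\mapsto \lambda_0(\alpha)$ is real analytic on $\RR$. A standard Agmon/harmonic approximation at the zeros of $V_\alpha$ yields $\lambda_0(\alpha)\sim |t_\alpha|^k$ where $t_\alpha=((k+1)\alpha)^{1/(k+1)}$ is the (signed) real root; hence $\lambda_0(\alpha)\to +\infty$ as $|\alpha|\to\infty$ (with the obvious adaptation for $k$ odd and $\alpha<0$, where $V_\alpha\ge \alpha^2$). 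So $\hat\nu$ is attained on a nonempty, discrete (by analyticity) and bounded set of minimizers.

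Second, by the Feynman--Hellmann formula applied to $\partial_\alpha V_\alpha=-2(p-\alpha)$,
\[
\lambda_0'(\alpha)=2\bigl(\alpha-\cM(\alpha)\bigr),\qquad \cM(\alpha):=\int_\RR p(t)\,|\psi_\alpha(t)|^2\,dt,
\]
so critical points of $\lambda_0$ coincide with fixed points of $\cM$. Under Conjecture~\ref{c:main} every minimum is non-degenerate, so between two distinct global minima there must sit a local maximum $\alpha^\ast$ with $\lambda_0''(\alpha^\ast)\le 0$. The whole task therefore reduces to excluding such intermediate maxima.

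Third, the symmetries of the model cut down the problem. For $k$ even, $k{+}1$ is odd, and $t\mapsto -t$ conjugates the operator at $\alpha$ to the operator at $-\alpha$, giving $\lambda_0(\alpha)=\lambda_0(-\alpha)$; the minimizing set is then symmetric about $0$ and uniqueness is equivalent to showing that $\lambda_0$ is strictly increasing on $[0,\infty)$. For $k$ odd, $V_\alpha$ itself is even in $t$, so $\psi_\alpha$ is even, $p\ge 0$ on $\RR$, $\cM(\alpha)>0$, and hence $\lambda_0'(\alpha)<0$ for all $\alpha\le 0$; combining this with the double-well asymptotics $\lambda_0'(\alpha)\sim k/t_\alpha>0$ (obtained by harmonic expansion together with the cubic-anharmonicity shift $\langle t-t_\alpha\rangle^{(1)}=-3k/(4t_\alpha^{k+1})$) localizes the entire minimizing set in a bounded interval $(0,A_k)$.

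The main obstacle is uniqueness inside that bounded window. The most natural route is to prove strict convexity $\lambda_0''(\alpha)>0$ via the second-order perturbation identity
\[
\lambda_0''(\alpha)=2-8\sum_{n\ge 1}\frac{|\langle\psi_n,(p-\alpha)\psi_0\rangle|^2}{\lambda_n(\alpha)-\lambda_0(\alpha)},
\]
i.e.\ to show that the sum is strictly less than $1/4$. There is no obvious algebraic identity reducing this sum to a computable moment of $\psi_0$, and the delicate point for $k$ odd is the double-well regime, where both the numerators and the spectral gap $\lambda_1(\alpha)-\lambda_0(\alpha)$ are exponentially small in $\alpha$ and must be tracked together. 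The most promising line of attack therefore appears to be a hybrid one: use exact harmonic-plus-anharmonic asymptotic expansions, with uniform Agmon-type control of the tunneling matrix elements, to dispose of $|\alpha|\ge A_k$, and apply a rigorous enclosure technique (for example interval arithmetic on the ODE~\eqref{e:Q0}) to handle a compact range of $\alpha$. Carrying out this compact verification sharply, and uniformly in the integer $k$, is the step I expect to be genuinely hard, and presumably the reason the authors leave the statement as a conjecture.
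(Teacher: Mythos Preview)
The statement you are addressing is labeled a \emph{conjecture} in the paper, and the paper does not contain a proof of it. Section~\ref{s:model} offers only supporting evidence: it recalls that uniqueness has been claimed for $k=1$ in \cite{Pan-Kwek} and for arbitrary odd $k$ in \cite{Aramaki05}, and it presents numerical data (Table~\ref{t:comp}) suggesting that for even $k$ the minimum sits at $\alpha_{\rm min}=0$. The analytic work in that section is directed at Conjecture~\ref{c:main} (non-degeneracy of each minimum) rather than at uniqueness; the sufficient condition~\eqref{condik}, for instance, yields $\lambda_0''(\alpha_{\rm min})>0$ but says nothing about the number of minima. There is thus no ``paper's own proof'' to compare your proposal against.

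Your proposal, in turn, is not a proof either, and you say so yourself in the final paragraph. The ingredients you assemble---real-analyticity of $\alpha\mapsto\lambda_0(\alpha)$ via Kato, the Feynman--Hellmann identity (which is exactly~\eqref{e:lambda1}), the parity symmetries, the second-order perturbation formula for $\lambda_0''$---are all correct and consistent with the paper's discussion. But they leave the same gap the paper leaves: once the asymptotics at $|\alpha|\to\infty$ and the sign of $\lambda_0'$ on one half-line have localized the minimizing set to a compact window, nothing in your outline rules out two distinct minimizers inside that window. Your proposed remedy is a rigorous numerical enclosure on a compact $\alpha$-range, which you do not carry out and which you correctly flag as the hard step, especially uniformly in $k$. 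Note also that the reduction ``exclude intermediate local maxima'' does not actually help: a local maximum between two global minima is guaranteed by Rolle, has $\lambda_0''\le 0$ automatically, and your convexity formula gives no contradiction there. The honest status of what you have written is a program rather than a proof, which matches the status the authors assign to the statement.
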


We refer the reader to Section~\ref{s:model} for discussions.

\paragraph{Acknowledgements}~\\
We would like to thank V. Bonnaillie-No\"el for the help with
numerical computations. The first author wishes to thank A. Morame
for useful discussions on related subjects a few years ago.

\section{Some ordinary differential operators}\label{s:model}
For any $\alpha\in \RR$ and $\beta\in\RR, \beta\neq 0$, consider the
self-adjoint second order differential operator in $L^2(\RR)$ given
by
\[
Q(\alpha, \beta)=-\frac{d^2 }{d t^2}+ \left(\frac{1}{k+1}\beta
t^{k+1}- \alpha \right)^2.
\]
In the context of magnetic bottles, this family of operators (for
$k=1$) first appears in \cite{Mont} (see also \cite{HM}). Denote by
$\lambda_0(\alpha,\beta)$ the bottom of the spectrum of the operator
$Q(\alpha,\beta)$.

In this section, we recall some properties of
$\lambda_0(\alpha,\beta)$, which were established in
\cite{Mont,HM,Pan-Kwek}. First of all, let us remark that
$\lambda_0(\alpha,\beta)$ is a continuous function of $\alpha\in
\RR$ and $\beta\in \RR\setminus\{0\}$. One can see by scaling that,
for $\beta>0$,
\begin{equation}\label{YK:alphabeta}
\lambda_0(\alpha,\beta)= \beta ^{\frac{2}{k+2}} \lambda_0
(\beta^{-\frac{1}{k+2}}\alpha,1)\;.
\end{equation}
A further discussion depends on the parity of $k$.

When $k$ is odd, $\lambda_0(\alpha,1)$ tends to $+\infty$ as $\alpha
\rightarrow -\infty$ by monotonicity. For analyzing its behavior as
$\alpha\rightarrow +\infty$, it is suitable to do a dilation
$t=\alpha^{\frac{1}{k+1}} s$, which leads to the analysis of
\[
\alpha^2 \left(-h^2\frac{d^2}{ds^2} + \left(\frac{s^{k+1}}{k+1}
-1\right)^2 \right)
\]
with $h=\alpha^{-(k+2)/(k+1)}$ small. One can use the semi-classical
analysis (see \cite{CDS} for the one-dimensional case and
\cite{Simon,HSI} for the multidimensional case) to show that
\[
\lambda_0(\alpha,1) \sim (k+1)^{\frac{2k}{k+1}}
\alpha^{\frac{k}{k+1}}\;,\;\mbox{ as } \alpha \rightarrow +\infty\;.
\]
In particular, we see that $\lambda_0(\alpha,1)$ tends to $+\infty$.

When $k$ is even, we have $
\lambda_0(\alpha,1)=\lambda_0(-\alpha,1)$, and, therefore, it is
sufficient to consider the case $\alpha \geq 0$. As $\alpha
\rightarrow +\infty$, semi-classical analysis again shows that
$\lambda_0(\alpha,1)$ tends to $+\infty$.

So in both cases, it is clear that the continuous function
$\lambda_0(\alpha,1)$ is nonnegative
and that there exists (at least one) $\alpha_{\mathrm{min}}\in \RR$ such
that $\lambda_0(\alpha,1)$ is minimal:
\[
\lambda_0(\alpha_{\mathrm{min}},1)=\hat{\nu}.
\]

The results of numerical computations\footnote{performed for us by V.~Bonnaillie-No\"el} for $\alpha_{\mathrm{min}}$,
$\hat\nu $ and the second eigenvalue $\lambda_{1}$ of the operator
$Q(\alpha_{\rm min},1)$ are given (modulo $10^{-2}$) in Table~\ref{t:comp}.

\begin{table}[h]
\begin{center}
\begin{tabular}{|c|c|c|c|c|c|c|c|}
\hline $k$     &    1  &       2  &     3   &    4   &    5 & 6 & 7\\
\hline $\alpha_{\mathrm{min}}$  & 0.35  &    0    &       0.16 &
0
& 0.10 & 0 &   0.07\\
\hline $\hat\nu $  &       0.57  &  0.66 & 0.68 & 0.76 &
0.81 & 0.87 & 0.92\\ \hline $\lambda_{1}$ & 1.98 & 2.50 &
2.61 & 2.98 & 3.18 & 3.47 & 3.66\\ \hline
\end{tabular}
\end{center}
\caption{Numerical results for $\alpha_{\mathrm{min}}$, $\hat\nu $
and  $\lambda_{1}$}\label{t:comp}
\end{table}

In Fig.~\ref{f:even} and~\ref{f:odd}, one can also see the graphs of
the function $\lambda=\lambda_0(\alpha,1)$ and its quadratic
approximation at $\alpha=\alpha_{\mathrm{min}}$:
\[
\lambda^{\mathrm{quad}}(\alpha)=\lambda_0(\alpha_{\rm
min},1)+\frac12 \frac{\partial^2 \lambda_0}{\partial\alpha^2}
(\alpha_{\mathrm{min}},1)(\alpha- \alpha_{\mathrm{min}} )^2\,.
\]

\begin{figure}[h]
\begin{center}
\includegraphics[width=12pc, height=10pc]{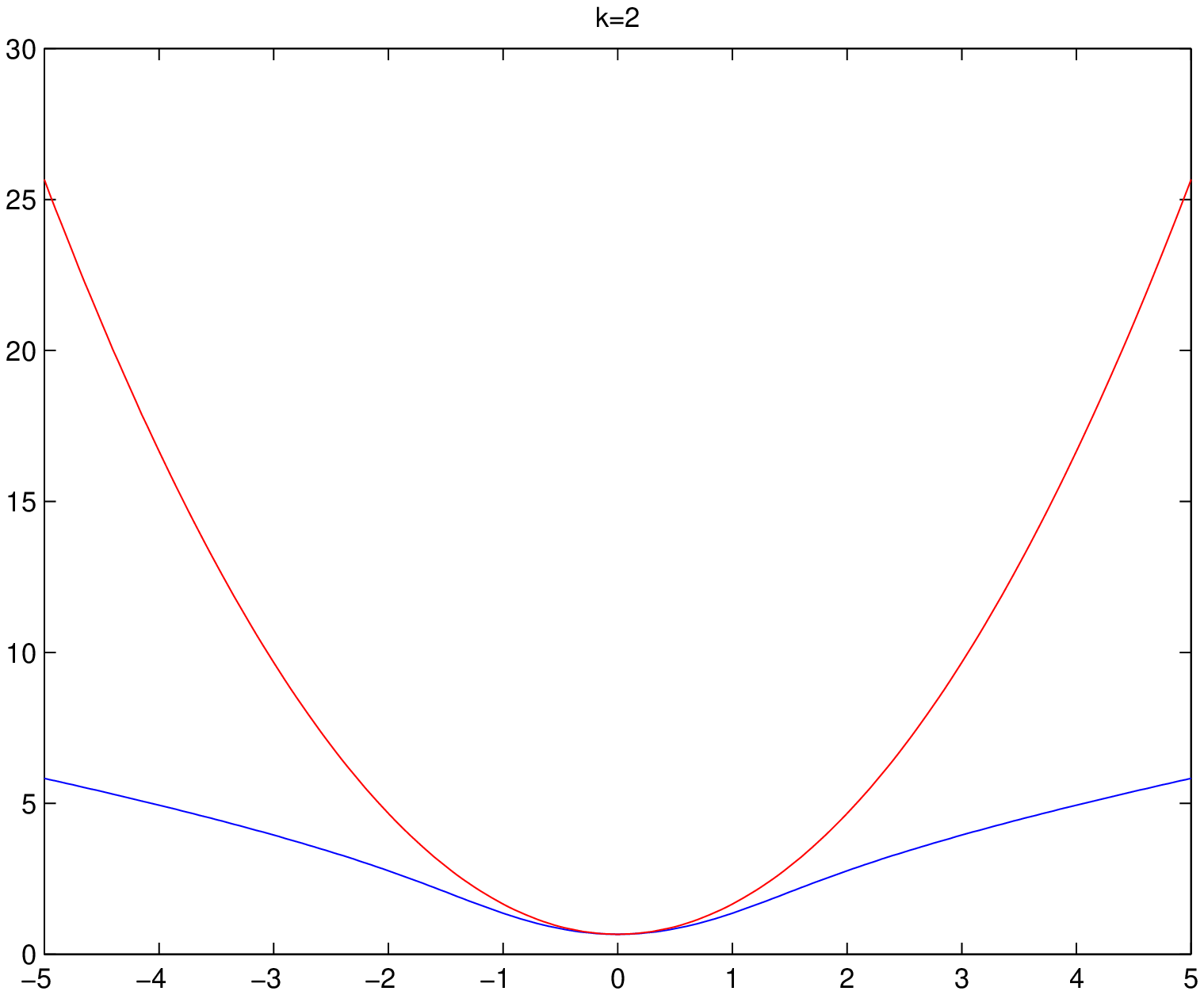}
\includegraphics[width=12pc, height=10pc]{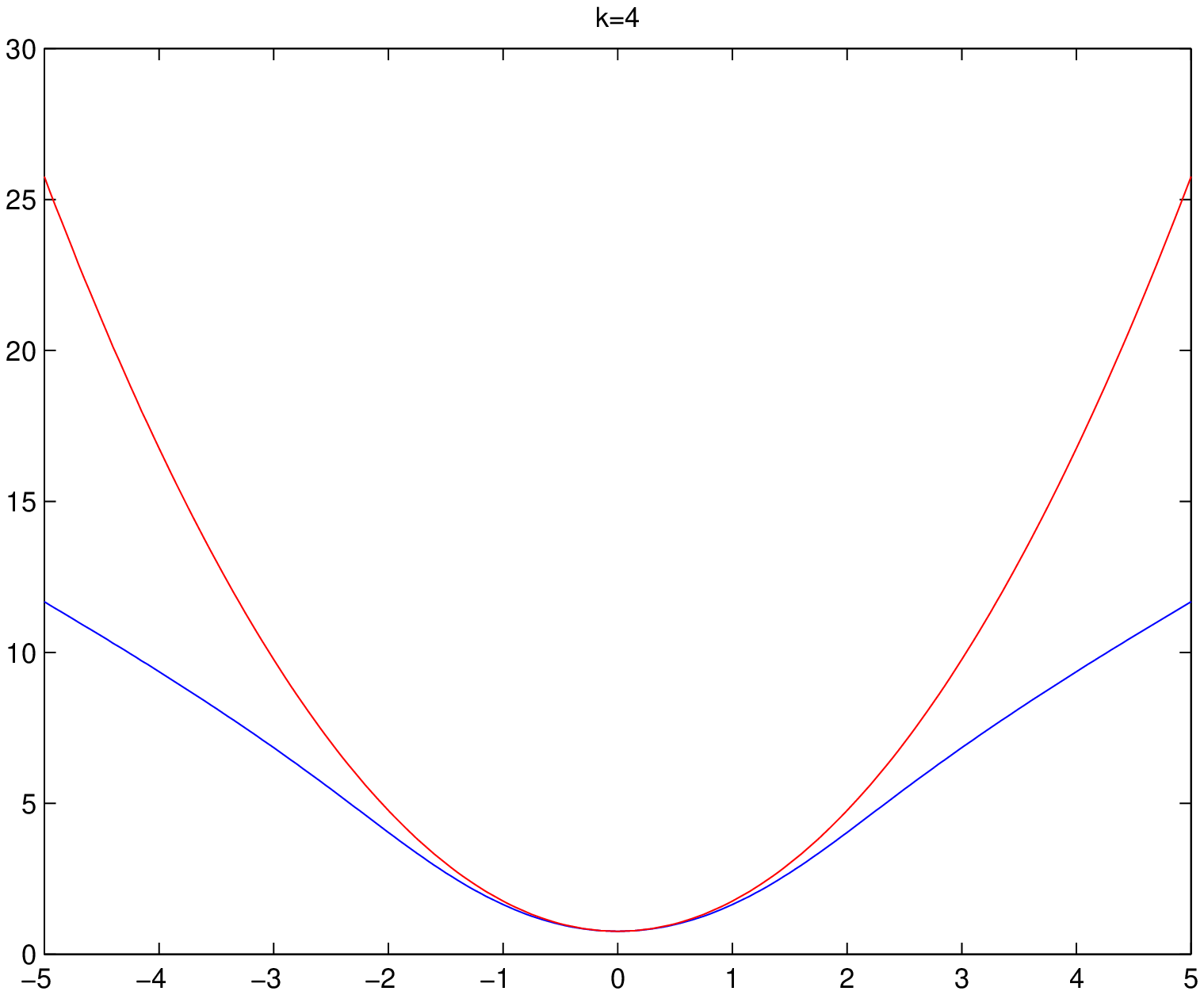}
\end{center}
\caption{$k$ even}\label{f:even}
\end{figure}

\begin{figure}[h]
\begin{center}
\includegraphics[width=12pc, height=10pc]{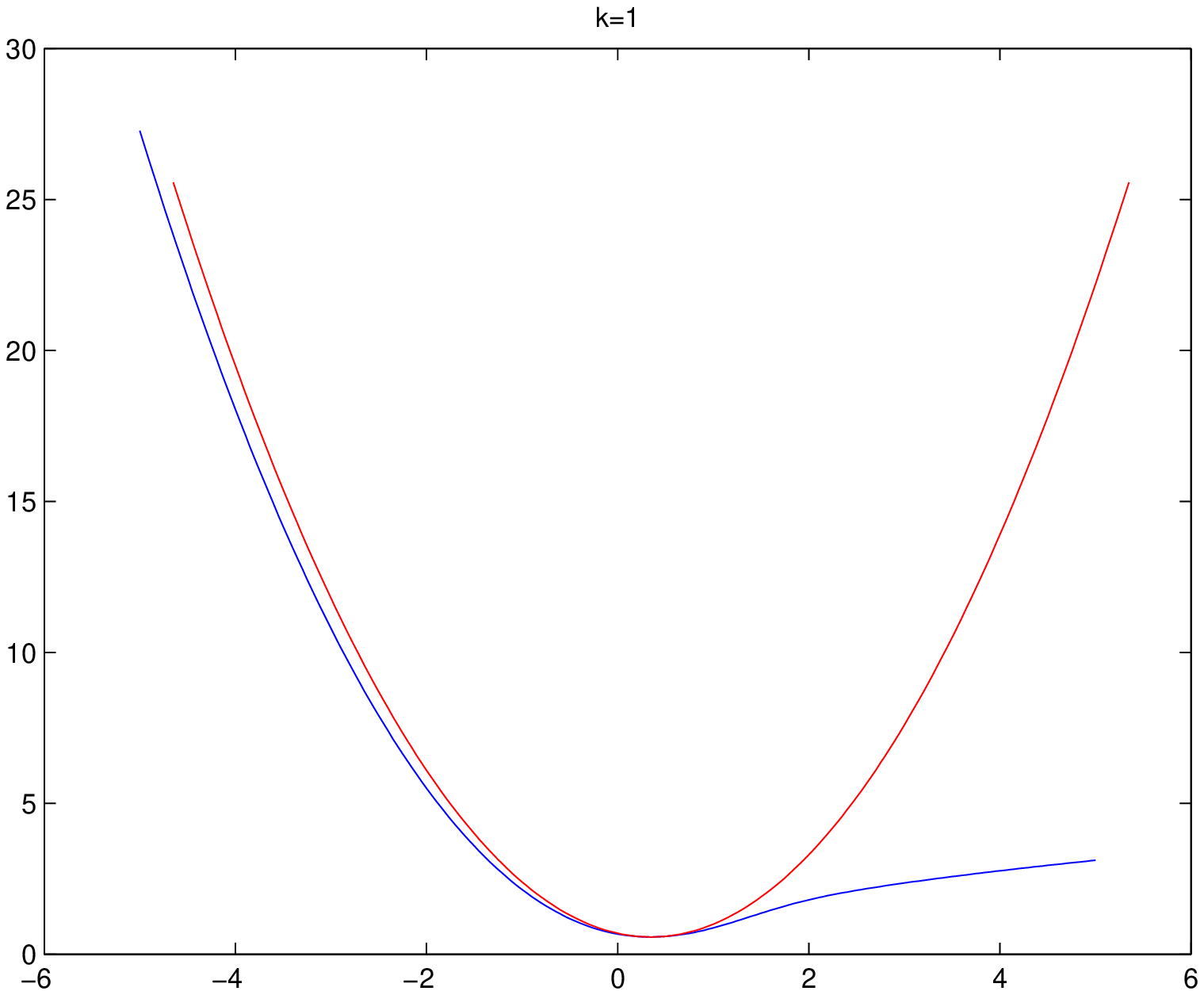}
\includegraphics[width=12pc, height=10pc]{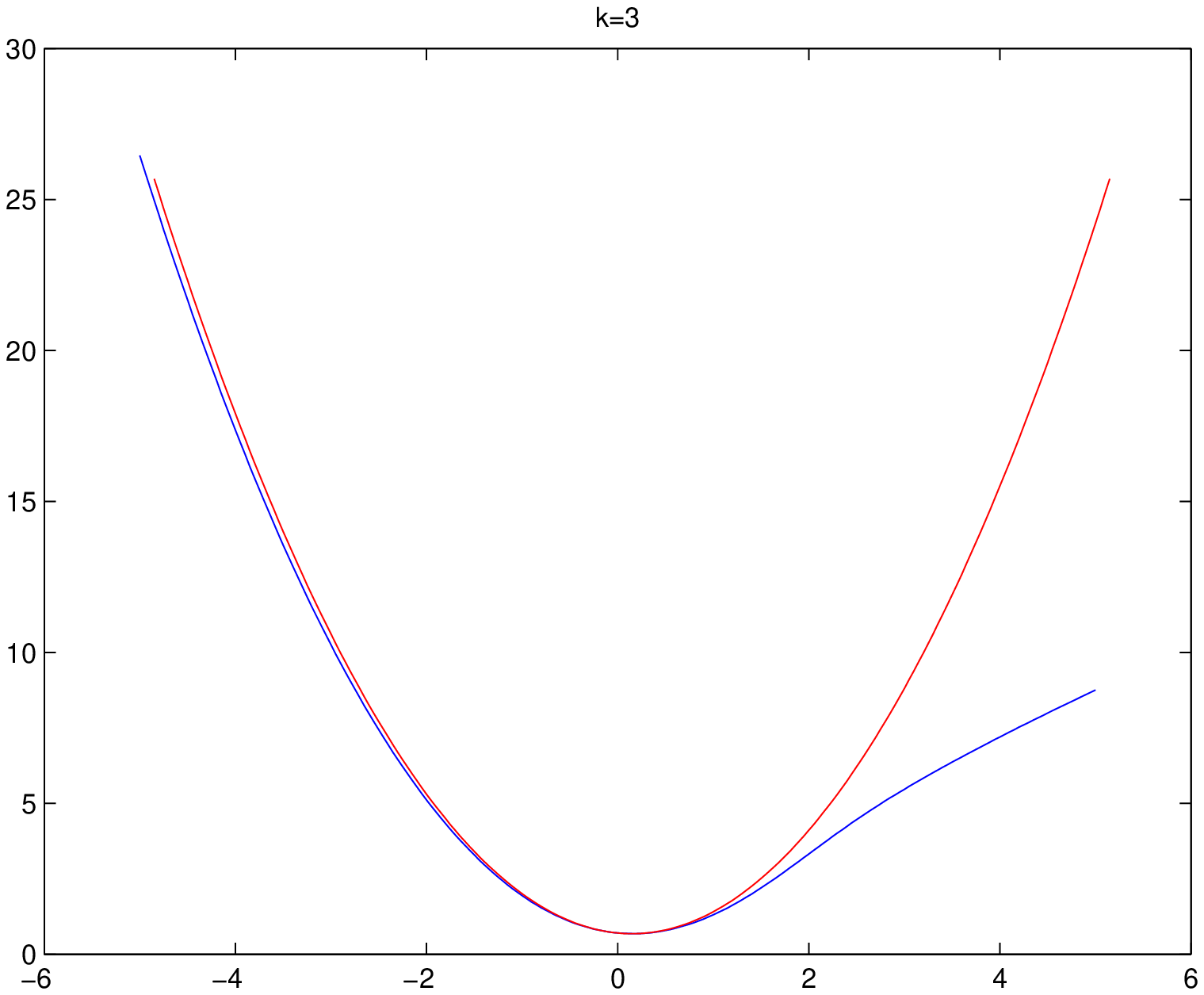}
\end{center}
\caption{$k$ odd} \label{f:odd}
\end{figure}

Numerical computations show that when $k$ is even the minimum is
attained at zero: $\alpha_{\rm min}=0$. They also suggest that the
minimum $\alpha_{\rm min}$ is non degenerate, supporting
Conjecture~\ref{c:main}, and that the second derivative
$\frac{\partial^2 \lambda_0}{\partial\alpha^2}
(\alpha_{\mathrm{min}},1)$ tends as $k$ tends to $\infty$ to $2$.
One more step towards the proof of Conjecture~\ref{c:main} can be
done by the following considerations.

Let $u^0_\alpha\in L^2(\RR)$, $\|u^0_\alpha\|=1$, be the $L^2$
normalized strictly positive eigenvector of the operator
$Q(\alpha,1)$, corresponding to the eigenvalue
$\lambda_0(\alpha,1)$:
\begin{equation}\label{e:Q}
Q(\alpha,1)u^0_\alpha = -\frac{d^2 u^0_\alpha}{d t^2}+
\left(\frac{t^{k+1}}{k+1}- \alpha \right)^2u^0_\alpha =
\lambda_0(\alpha,1) u^0_\alpha.
\end{equation}
One can show that $u^0_\alpha$ depends smoothly on $\alpha$.
Differentiating \eqref{e:Q} with respect to $\alpha$, we obtain
\begin{equation}\label{e:Qlambda0}
Q(\alpha,1)\frac{\partial u^0_\alpha}{\partial\alpha} -2
\left(\frac{t^{k+1}}{k+1}- \alpha \right) u^0_\alpha =
\frac{\partial \lambda_0}{\partial \alpha}(\alpha,1) u^0_\alpha +
\lambda_0(\alpha,1) \frac{\partial u^0_\alpha}{\partial \alpha},
\end{equation}
that implies that
\begin{equation} \label{e:lambda1}
\frac{\partial \lambda_0}{\partial\alpha}(\alpha,1)= -2\int
\left(\frac{t^{k+1}}{k+1}- \alpha \right) (u^0_\alpha(t))^2\,dt
\end{equation}
and
\begin{equation} \label{e:lambda02} \frac{\partial^2
\lambda_0}{\partial\alpha^2}(\alpha,1)=2 -4\int
\left(\frac{t^{k+1}}{k+1}u^0_\alpha(t)- \alpha \right)\frac{\partial
u^0_\alpha}{\partial\alpha} \,dt.
\end{equation}

It follows from \eqref{e:lambda1} that
\begin{equation}
\int \left(\frac{t^{k+1}}{k+1}- \alpha_{\mathrm{min}} \right)
(u^0_{\alpha_{\mathrm{min}}}(t))^2\,dt=0, \label{e:lambda01}
\end{equation}
and, for $k$ odd, $$\alpha_{\mathrm{min}}=\int \frac{t^{k+1}}{k+1}
(u^0_{\alpha_{\mathrm{min}}}(t))^2\,dt>0\,.$$ It has been claimed that
this minimum is unique for $k=1$ in \cite{Pan-Kwek} and for
arbitrary odd $k$ in \cite{Aramaki05}.

It also follows from \eqref{e:Qlambda0} that
\begin{equation}\label{e:Qlambda}
\left(Q(\alpha_{\rm min},1)-\hat\nu\right)\frac{\partial
u^0_\alpha}{\partial\alpha}= 2 \left(\frac{t^{k+1}}{k+1}-
\alpha_{\rm min} \right) u^0_{\alpha_{\mathrm{min}}}.
\end{equation}

Note that the above computations can be made not only for a minimum
point $\alpha_{\mathrm{min}}$, but also for any critical point of
$\lambda_0$.

We will also need the following identity (see \cite{Pan-Kwek},
Proposition 3.5 and the formula (3.14)):
\begin{equation}\label{e:norm}
\left\|\left(\frac{1}{k+1}t^{k+1}- \alpha_{\rm min}\right)
u^0_{\alpha_{\rm min}}\right\|^2=\frac{\hat\nu}{k+2}.
\end{equation}

Based on numerical computations, one can give a proof of
Conjecture~\ref{c:main} for small $k$ as follows. Since
$\frac{\partial u^0_\alpha}{\partial\alpha}$ is orthogonal in
$L^2(\RR,dt)$ to $u^0_\alpha$, by \eqref{e:Qlambda} and
\eqref{e:norm}, we have
\begin{align*}
\left\|\frac{\partial u^0_\alpha}{\partial\alpha}\right\|^2 & \leq
\frac{1}{\lambda_1-\hat\nu} \left(\left(Q(\alpha_{\rm
min},1)-\hat\nu\right)\frac{\partial
u^0_\alpha}{\partial\alpha},\frac{\partial
u^0_\alpha}{\partial\alpha}\right)\\ &  =
\frac{2}{\lambda_1-\hat\nu} \int \left(\frac{t^{k+1}}{k+1}- \alpha_{min}
\right)u^0_{\alpha_{\mathrm{min}}}(t) \frac{\partial
u^0_\alpha}{\partial\alpha}(t)
dt \\
& \leq \frac{2}{\lambda_1-\hat\nu}
\left(\frac{\hat\nu}{k+2}\right)^{1/2} \,\left\|\frac{\partial
u^0_\alpha}{\partial\alpha}\right\|.
\end{align*}
This implies that
\[
\left\|\frac{\partial u^0_\alpha}{\partial\alpha}\right\| \leq
\frac{2}{\lambda_1-\hat\nu} \left(\frac{\hat\nu}{k+2}\right)^{1/2}
\]
and
\[
\int \left(\frac{t^{k+1}}{k+1}- \alpha
\right)u^0_{\alpha_{\mathrm{min}}}(t) \frac{\partial
u^0_\alpha}{\partial\alpha}(t) dt \leq
\frac{2\hat\nu}{(\lambda_1-\hat\nu)(k+2)}\,.
\]
By \eqref{e:lambda02}, it follows that
\[
 \frac{\partial^2 \lambda_0}{\partial\alpha^2}(\alpha_{min},1) \geq 2 -
\frac{8\hat\nu}{(\lambda_1-\hat\nu)(k+2)}= 2 \frac{(k+2)
 \lambda_1 - (k+6) \hat\nu}{ (k+2)(\lambda_1-\hat\nu)} \,.
\]
Hence if for some $k$, we have
\begin{equation}\label{condik}
 (k+2) \lambda_1  > (k+6) \hat\nu\,,
\end{equation}
we deduce that the corresponding minimum $\alpha_{\rm min}$ is non-degenerate.

Using the data given in Table~\ref{t:comp}, one easily verifies that
the condition \eqref{condik} is satisfied for $k=1,\dots,7$. Due to
the accuracy of these numerical computations, one can consider that
 \eqref{e:conjecture}  is safely controlled  for $k=1,\dots,7$.

When $k$ is odd, the potential of the Sturm-Liouville operator
$Q(\alpha, 1)$ is even and, by well-known properties of
Sturm-Liouville operators, any eigenfunction associated with the
$m$-th eigenvalue $\lambda_m(\alpha)$ of $Q(\alpha, 1)$ is even if
$m$ is even and odd if $m$ is odd. In particular, $u^0_\alpha$ is
even for any $\alpha$, and, therefore, $\frac{\partial
u^0_\alpha}{\partial\alpha}$ is even as well. This shows that one
can replace $\lambda_1$ by $\lambda_2$ in the above arguments,
stating that if, for some odd $k$, we have
\[
 (k+2) \lambda_2  > (k+6) \hat\nu\,,
\]
the corresponding minimum $\alpha_{\rm min}$ is non-degenerate.

\section{Estimates for the bottom}
In this section, we will prove Theorem~\ref{t:estimate}. So we
assume that the zero set of the magnetic field $\mathbf B$ is a
smooth oriented hypersurface $S$, and the magnetic field satisfies
the estimate~(\ref{YK:B1}) in a neighborhood $U$ of $S$ with some
$k\in \ZZ$, $k>0$.

Let $G$ be the Riemannian metric on $S$ induced by $g$. Denote by
$dx_{G}$ the corresponding Riemannian volume form on $S$. Let
\[
\omega_{0.0}=i^*_S{\mathbf A}
\]
be the closed one form on $S$ induced by ${\mathbf A}$, where $i_S$
is the embedding of $S$ to $M$. For any $t\in \RR$, let
$P^h_S\left(\omega_{0,0} +\frac{1}{k+1}t^{k+1}\omega_{0,1}\right)$
be a formally self-adjoint operator in $L^2(S, dx_{G})$ defined by
\begin{multline*}
P^h_S\left(\omega_{0,0}+\frac{1}{k+1}t^{k+1}\omega_{0,1}\right)=
\left(ihd+\omega_{0,0}+\frac{1}{k+1}t^{k+1}\omega_{0,1}\right)^*\\
\times\left(ihd+\omega_{0,0}+\frac{1}{k+1}t^{k+1}\omega_{0,1}\right).
\end{multline*}

Consider the self-adjoint operator $H^{h,0}$ in $L^2(\RR\times S,
dt\,dx_{G})$ defined by the formula
\[
H^{h,0}=-h^2\frac{\partial^2 }{\partial t^2}+
P^h_S\left(\omega_{0,0}+\frac{1}{k+1}t^{k+1}\omega_{0,1}\right).
\]
By Theorem~2.7 of \cite{HM}, the operator $H^{h,0}$ has discrete
spectrum.

The quadratic form associated with the operator $H^{h,0}$ is given
by
\begin{multline*}
q^h[u] : = \int_{-\infty}^{+\infty}\int_S
\left[h^2\left|\frac{\partial u}{\partial t}\right|^2+
\left|(ih\,d+\omega_{0,0}+\frac{1}{k+1}t^{k+1}\omega_{0,1})u\right|_{g_0}^2
\right]\,dt\,dx_{g_0}, \\ u\in C^\infty_c(\RR\times S).
\end{multline*}

Without loss of generality, we can assume that $U$ coincides with an
open tubular neighborhood of $S$ and choose a diffeomorphism
\[
\Theta : I\times S\to U,
\]
where $I$ is an open interval $(-\varepsilon_0,\varepsilon_0)$ with
$\varepsilon_0>0$ small enough, such that $\Theta\left|_{\{0\}\times
S}\right.=\mathrm{id}$ and
\begin{equation}\label{e:diffg}
(\Theta^*g-\tilde{g}_0)\left|_{\{0\}\times S}\right.=0,
\end{equation}
where $\tilde{g}_0$ is a Riemannian metric on $I\times S$ given by
\[
\tilde{g}_0=dt^2+G.
\]
By adding to $\mathbf A$ the exact one form $d\phi$, where $\phi$ is
the function satisfying
\begin{gather*}
    N(x)\phi(x)=-\langle N,{\bf A}\rangle(x), \quad x\in U,\\
    \phi(x) =0, \quad x\in S,
\end{gather*}
we may assume that
\begin{equation}\label{e:NA}
\langle N,{\bf A}\rangle(x)=0, \quad x\in U\,.
\end{equation}

Denote by $H^h_D$ the unbounded self-adjoint operator in the Hilbert
space $L^2(D)$ defined by the operator $H^h$ in the domain
$D=\overline{U}$ with the Dirichlet boundary conditions. The
operator $H^h_D$ is generated by the quadratic form
\[
u\mapsto q^h_D [u] : = \int_D |(ih\,d+{\bf A})u|^2\,dx
\]
with the domain
\[
\Dom (q^h_D) = \{ u\in L^2(D) : (ih\,d+{\bf A})u \in L^2\Omega^1(D),
u\left|_{\partial D}\right.=0 \},
\]
where $L^2\Omega^1(D)$ denotes the Hilbert space of $L^2$
differential $1$-forms on $D$ and  $dx$ is the Riemannian volume
form on $D$. Denote by $\lambda_0(H_D^h)$ the bottom of the spectrum
of the operator $H_D^h$. By Theorem 2.1 in \cite{gaps}, there exist
$C, c, h_0>0$ such that for any $h\in (0,h_0]$ we have
\[
|\lambda_0(H^{h})-\lambda_0(H_D^h)|< Ce^{-c/\sqrt{h}}.
\]
It can be seen from the proof of Theorem~2.7 in \cite{HM} that, if
there exist $h_0>0$, a family of functions $w^h\in
C^\infty_c(I\times S)$, $h\in (0,h_0]$, and a function
$\lambda^0(h)$ defined on $h\in (0,h_0]$ such that
\[
\lambda^0(h)\leq \widehat{C} h^{(2k+2)/(k+2)}, \quad h\in (0,h_0],
\]
and
\[
\|(H^{h,0}-\lambda^0(h))w^h\|\leq C_1h^{(2k+3)/(k+2)}\|w^h\|,\quad
h\in (0,h_0]\,,
\]
with some positive constants $\widehat{C}$ and $C_1$, then there
exist $h_1\leq h_0 $ and a positive constant $C_2$ such that
\[
\|(H_D^h-\lambda^0(h))v^h\|\leq C_2h^{(2k+3)/(k+2)}\|v^h\|\,,\,
\forall h\in ]0,h_1]\,,
\]
where $v^h=(\Theta^{-1})^*w^h\in C^\infty_c(U)\,$. So, in order to
complete the proof, it sufficient to prove that there exist $h_0>0$
and positive constants $C_1$ and $C_2$ such that, for all $h\in
]0,h_0]$,
\[
\hat{\nu}\,
\omega_{\mathrm{min}}(B)^{\frac{2}{k+2}}h^{\frac{2k+2}{k+2}} - C_1
h^{\frac{6k+8}{3(k+2)}} \leq \lambda_0(H^{h,0}) \leq \hat{\nu}\,
\omega_{\mathrm{min}}(B)^{\frac{2}{k+2}}h^{\frac{2k+2}{k+2}} + C_2
h^{\frac{6k+8}{3(k+2)}}\,.
\]
The desired upper bound for $\lambda_0(H^{h,0})$ follows from the
construction of approximate eigenfunctions of $H^{h,0}$ given in
\cite{Qmath10}. It remains to prove the lower bound for
$\lambda_0(H^{h,0})$.

We will localize the problem in two scales.\\

{\bf  First}, we choose a
covering of $S$ by local coordinate charts, $S=\bigcup_{m=1}^d U_m$.
Let $\chi_m\in C^\infty(S)$ be a partition of unity subordinate to
this covering so that ${\rm supp}\,\chi_m\subset U_m$ for each $m$
and
\[
\sum_{m=1}^d\chi^2_m(x)=1, \quad x\in S.
\]
There exists a $C^\infty$ real valued function $\varphi_m$ such that
on $U_m$
\[
\omega_{0,0}=d\varphi_m.
\]
Therefore, for $v\in C^\infty_c(U_m)$, we obtain
\begin{multline*}
q^h[v] = \int_{-\infty}^{+\infty}\int_S \Big[h^2\left|\frac{\partial
v}{\partial t}\right|^2\\ +
\left|(ih\,d+\frac{1}{k+1}t^{k+1}\omega_{0,1})\,\exp
\left(-\frac{i}{h}\varphi_m\right)v\right|_{G}^2 \Big]\,dt\,dx_{G}.
\end{multline*}

{\bf Secondly}, for a fixed $m=1,2,\ldots,d$, by scaling a standard partition
of unity in $\RR^{n-1}$, we can find a partition of unity satisfying
\[
\sum_{j}|\chi^h_{m,j}(x)|^2=1, \quad x\in U_m.
\]
\[
\sum_{j}|\nabla\chi^h_{m,j}(x)|^2\leq
C\varepsilon_0^{-2}h^{-2\beta}, \quad x\in U_m.
\]
and
\[
{\rm supp}\,\chi^h_{m,j}\subset Q_{m,j}:=\{x\in\RR^{n-1} :
|x-y_{m,j}|\leq \varepsilon_0 h^{\beta}\},
\]
where $\beta>0$ is a parameter which will be determined later.

Choose an arbitrary $z_{m,j}\in Q_{m,j}$. Let $g_{m,j}=G(z_{m,j})$
be a flat Euclidean metric in $\RR^{n-1}$. For any $\ell$ and $p$,
we have
\begin{equation}\label{e:g}
|G^{\ell p}(x)-g_{m,j}^{\ell p}(x)|=O(h^{\beta})\ \text{as}\ h\to 0,
\quad x\in Q_{m,j}.
\end{equation}
We also have a similar estimate for $\omega_{0,1}$:
\begin{equation}\label{e:omega}
|\omega_{0,1}(x)-\omega_{0,1}(z_{m,j})|=O(h^{\beta})\ \text{as}\
h\to 0, \quad x\in Q_{m,j}.
\end{equation}

Consider the self-adjoint operator $H^{h,0}_{m,j}$ in $L^2(\RR\times
Q_{m,j}, dt\,dx_{g_{m,j}})$ defined by the formula
\begin{multline*}
H^{h,0}_{m,j}=-h^2\frac{\partial^2 }{\partial t^2}\\ +
\left(ihd+\frac{1}{k+1}t^{k+1}\omega_{0,1}(z_{m,j})\right)^*
\left(ihd+\frac{1}{k+1}t^{k+1}\omega_{0,1}(z_{m,j})\right).
\end{multline*}
The quadratic form associated with $H^{h,0}_{m,j}$ is given by
\begin{multline*}
q^h_{m,j}[w] : = \int_{-\infty}^{+\infty}\int_{Q_{m,j}}
\Big[h^2\left|\frac{\partial w}{\partial t}\right|^2 \\ +
\left|(ih\,d+\frac{1}{k+1}t^{k+1}\omega_{0,1}(z_{m,j}))w\right|_{g_{m,j}}^2
\Big] |G(z_{m,j})|^{1/2}\,dt\,dx, \\ w\in C^\infty_c(\RR\times
Q_{m,j}).
\end{multline*}

\begin{lemma}
For any $w\in C^\infty_c(\RR\times Q_{m,j})$, we have
\[
q^h_{m,j}[w]\geq
\hat{\nu}\,|\omega_{0,1}(z_{m,j})|^{\frac{2}{k+2}}h^{\frac{2k+2}{k+2}}
\|w\|^2.
\]
\end{lemma}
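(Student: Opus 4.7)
The plan is to reduce $q^h_{m,j}[w]$ to the bottom of the spectrum of the one-dimensional model operator $Q(\alpha,1)$ via a linear change of coordinates in $x$, Plancherel in a single direction, and the scaling identity \eqref{YK:alphabeta}. Since $w\in C^\infty_c(\RR\times Q_{m,j})$, I may extend it by zero to $\RR\times \RR^{n-1}$ without altering $q^h_{m,j}[w]$ or $\|w\|^2$, reducing the problem to a model on a full Euclidean space with constant (in $x$) metric and magnetic vector potential.

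The first step is to choose an invertible linear map $P\in GL(n-1,\RR)$ with $P^{T}G(z_{m,j})P=I$ and, post-composed with an orthogonal rotation, sending the constant covector $\omega_{0,1}(z_{m,j})$ to $\beta\,dy_1$, where $\beta:=|\omega_{0,1}(z_{m,j})|$. In the coordinates $y=P^{-1}x$ the volume form $|G(z_{m,j})|^{1/2}dx$ becomes $dy$, the dual metric $g_{m,j}^{\ell p}$ becomes Euclidean, and the quadratic form rewrites as
\[
q^h_{m,j}[w]=\int_{-\infty}^{+\infty}\!\int_{\RR^{n-1}}\!\Big[h^2|\partial_t w|^2+\Big|ih\partial_{y_1}w+\frac{\beta t^{k+1}}{k+1}w\Big|^2+h^2\sum_{\ell\geq2}|\partial_{y_\ell}w|^2\Big]\,dt\,dy.
\]

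The second step is to drop the nonnegative terms $h^2|\partial_{y_\ell}w|^2$ for $\ell\geq 2$ and apply the Plancherel identity in $y_1$. Writing $\hat w(t,\xi_1,y')$ for the partial Fourier transform, this gives
\[
q^h_{m,j}[w]\geq\int_{-\infty}^{+\infty}\!\int_{\RR}\!\int_{\RR^{n-2}}\!\Big[h^2|\partial_t\hat w|^2+\Big(\frac{\beta t^{k+1}}{k+1}-h\xi_1\Big)^{2}|\hat w|^2\Big]\,dt\,d\xi_1\,dy'.
\]
For fixed $(\xi_1,y')$, the $t$-integrand is the quadratic form of the one-dimensional Schr\"odinger operator $-h^2\partial_t^2+\bigl(\frac{\beta t^{k+1}}{k+1}-h\xi_1\bigr)^{2}$. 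The dilation $t=h^{1/(k+2)}\beta^{-1/(k+2)}s$ conjugates this operator with $h^{(2k+2)/(k+2)}\beta^{2/(k+2)}\,Q(\alpha,1)$ for $\alpha=h^{1/(k+2)}\beta^{-1/(k+2)}\xi_1$, consistently with \eqref{YK:alphabeta}. Its bottom is therefore at least $h^{(2k+2)/(k+2)}\beta^{2/(k+2)}\hat\nu$, uniformly in $\xi_1$. Integrating in $(\xi_1,y')$ and undoing Plancherel gives
\[
q^h_{m,j}[w]\geq\hat\nu\,|\omega_{0,1}(z_{m,j})|^{\frac{2}{k+2}}h^{\frac{2k+2}{k+2}}\|w\|^2,
\]
which is the claim.

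There is no serious obstacle: once the right coordinates and scaling are identified, the reduction is a routine partial-Fourier argument. The only bookkeeping detail worth checking carefully is that the Jacobian $|G(z_{m,j})|^{1/2}$ built into the $L^2$ measure and the intrinsic covector norm $|\omega_{0,1}(z_{m,j})|$ computed with $g_{m,j}^{-1}$ transform consistently under $x\mapsto y$, so that the constant $\beta$ appearing after the coordinate change is precisely $|\omega_{0,1}(z_{m,j})|$ and not some power of $\det G(z_{m,j})$ times it.
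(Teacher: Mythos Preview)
Your proof is correct and follows essentially the same route as the paper: a linear change of the $x$-coordinates to a $g_{m,j}$-orthonormal frame aligning $\omega_{0,1}(z_{m,j})$ with a single direction, dropping the transverse derivative terms, partial Fourier transform in that direction, and reduction to the bottom of $Q(\alpha,\beta)$ via the scaling \eqref{YK:alphabeta}. The only cosmetic difference is that the paper performs the $t$-scaling $t=h^{1/(k+2)}\tau$ first and then lands on $Q(\alpha,\beta)$ with $\beta=|\omega_{0,1}(z_{m,j})|$, whereas you fold $\beta$ into the $t$-dilation to reach $Q(\alpha,1)$ directly; the two are equivalent.
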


\begin{proof}
Using the simple scaling $t=h^{1/(k+2)}\tau$, we can write
\begin{multline*}
q^h_{m,j}[w] = h^{\frac{2k+2}{k+2}}
\int_{-\infty}^{\infty}\int_{Q_{m,j}} \Big[\left|\frac{\partial
w}{\partial \tau}\right|^2 \\ +
\left|(ih^{\frac{1}{k+2}}\,d+\frac{1}{k+1}\tau^{k+1}\omega_{0,1}(z_{m,j}))w\right|_{g_{m,j}}^2
\Big]|G(z_{m,j})|^{1/2}\,d\tau\,dx.
\end{multline*}
There exists a $g_{m,j}$-orthonormal frame $\{X_1,X_2, \ldots,
X_{n-1}\}$ in $\RR^{n-1}$ such that
\[
\omega_{0,1}(z_{m,j})(X_1)=|\omega_{0,1}(z_{m,j})|, \quad
\omega_{0,1}(z_{m,j})(X_\ell)=0,\quad  l\geq 2.
\]
Then, by a linear change of variables in $\RR^{n-1}$, we obtain
\begin{align*}
q^h_{m,j}[w] = & h^{\frac{2k+2}{k+2}}
\int_{-\infty}^{\infty}\int_{Q_{m,j}} \Big[\left|\frac{\partial
w}{\partial \tau}\right|^2 +
\left|ih^{\frac{1}{k+2}}\,\frac{\partial w}{\partial y_1} +
\frac{1}{k+1}\tau^{k+1} |\omega_{0,1}(z_{m,j})|w\right|^2 \\ &
+h^{\frac{2}{k+2}} \sum_{l=2}^{n-1} \left| \frac{\partial
w}{\partial y_\ell}\right|^2 \Big]\,d\tau\,dy_1\,dy_2\ldots
dy_{n-1}\\ \geq & h^{\frac{2k+2}{k+2}}
\int_{-\infty}^{\infty}\int_{\RR^{n-1}} \Big[\left|\frac{\partial
w}{\partial \tau}\right|^2 \\ & +
\left|ih^{\frac{1}{k+2}}\,\frac{\partial w}{\partial y_1} +
\frac{1}{k+1}\tau^{k+1}
|\omega_{0,1}(z_{m,j})|w\right|^2\Big]\,d\tau\,dy_1\,dy_2\ldots
dy_{n-1}.
\end{align*}
Denote by $\widehat{w}(\eta_1,y_2,\ldots,y_{n-1})$ the partial
Fourier transform of $w$ in the $y_1$-variable. We have
\begin{multline*}
\int_{\RR^{n}} \Big[\left|\frac{\partial w}{\partial \tau}\right|^2
+ \left|ih^{\frac{1}{k+2}}\,\frac{\partial w}{\partial y_1} +
\frac{1}{k+1}\tau^{k+1}
|\omega_{0,1}(z_{m,j})|w\right|^2\Big]\,d\tau\,dy_1\,dy_2\ldots
dy_{n-1}\\
= \int_{\RR^{n}} \Big[\left|\frac{\partial \widehat{w}}{\partial
\tau}\right|^2 + \left( \frac{1}{k+1}\tau^{k+1}
|\omega_{0,1}(z_{m,j})|
-h^{\frac{1}{k+2}}\eta_1\right)^2|\widehat{w}|^2\Big]\,d\tau\,d\eta_1\,dy_2\ldots
dy_{n-1}.
\end{multline*}
For any fixed $\eta_1, y_2,\ldots, y_{n-1}$, the expression
\[
\int_{-\infty}^{\infty} \Big[\left|\frac{\partial
\widehat{w}}{\partial \tau}\right|^2 + \left(
\frac{1}{k+1}\tau^{k+1} |\omega_{0,1}(z_{m,j})|
-h^{\frac{1}{k+2}}\eta_1\right)^2|\widehat{w}|^2\Big]\,d\tau
\]
is the quadratic form of the operator $Q(\alpha,\beta)$ with
\[
\alpha=h^{\frac{1}{k+2}}\eta_1, \quad \beta=|\omega_{0,1}(z_{m,j})|
\]
evaluated on $\widehat{w}$. Therefore, we have
\begin{multline*}
\int_{-\infty}^{\infty} \Big[\left|\frac{\partial
\widehat{w}}{\partial \tau}\right|^2 + \left(
\frac{1}{k+1}\tau^{k+1} |\omega_{0,1}(z_{m,j})|
-h^{\frac{1}{k+2}}\eta_1\right)^2|\widehat{w}|^2\Big]\,d\tau\\ \geq
\lambda_0 (h^{\frac{1}{k+2}}\eta_1,|\omega_{0,1}(z_{m,j})|)
\int_{-\infty}^{\infty}|\widehat{w}|^2\,d\tau \geq
\hat{\nu}\,|\omega_{0,1}(z_{m,j})|^{\frac{2}{k+2}}
\int_{-\infty}^{\infty}|\widehat{w}|^2\,d\tau,
\end{multline*}
that immediately completes the proof.
\end{proof}

On the other hand, using the inequality
\[
2|ab|\leq \varepsilon^2a^2+\varepsilon^{-2}b^2\,,
\]
with $\varepsilon=h^\rho$ (here $\rho>0$ is a parameter which will
be determined later) and estimates \eqref{e:g} and \eqref{e:omega},
 we get, for any $w\in C^\infty_c(\RR\times Q_{m,j})$,
\begin{align*}
q^h[w] = & (1+O(h^\beta)) \int_{-\infty}^{+\infty}\int_{Q_{m,j}}
\Big[h^2\left|\frac{\partial w}{\partial t}\right|^2\\ & +
\left|(ih\,d+\frac{1}{k+1}t^{k+1}\omega_{0,1})(\exp
\left(-\frac{i}{h}\varphi_m\right)w\right|_{g_{m,j}}^2
\Big]|G(z_{m,j})|^{1/2}\,dt\,dx
\\ \geq & (1-h^{2\rho})(1+O(h^\beta)) q^h_{m,j}[\exp
\left(-\frac{i}{h}\varphi_m\right) w]\\
& - h^{-2\rho} (1+O(h^\beta)) \int_{-\infty}^{+\infty}\int_{Q_{m,j}}
\frac{1}{(k+1)^2}|t|^{2k+2}\\ & \times \left|
(\omega_{0,1}(x)-\omega_{0,1}(z_{m,j}))w(t,x)\right|_{g_{m,j}}^2
|G(z_{m,j})|^{1/2} \,dt\,dx\\  \geq & (1-h^{2\rho})(1+O(h^\beta))
q^h_{m,j}[\exp
\left(-\frac{i}{h}\varphi_m\right) w]\\
& - \frac{1}{(k+1)^2} h^{2\beta-2\rho}  (1+O(h^\beta)) \\ & \times
\int_{-\infty}^{\infty}\int_{Q_{m,j}}  |t|^{2k+2}|w(t,x)|^2
|G(x)|^{1/2} \,dt\,dx.
\end{align*}
Therefore, using the IMS formula, for any $v\in C^\infty_c(\RR\times
U_m)$, we obtain
\begin{align*}
q^h[v] = & \sum_{j} q^h[\chi_{m,j}v]-h^2\sum_{j} \|
|\nabla\chi_{m,j}|v\|^2 \\
\geq & \left[ \left(1-h^{2\rho} \right) (1+O(h^\beta))
\hat{\nu}\,\omega_{\mathrm{min}}(B)^{\frac{2}{k+2}}h^{\frac{2k+2}{k+2}}
- C h^{2-2\beta}\right]\|v\|^2\\
 & -  \frac{1}{(k+1)^2} h^{2\beta-2\rho} (1+O(h^\beta)) \left\|
|t|^{k+1} v\right\|^2,
\end{align*}
and, furthermore, for any $u\in C^\infty_c(\RR\times S)$, we get
\begin{align*}
q^h[u] = &\sum_{m=1}^dq^h[\chi_mu]-h^2\sum_{m=1}^d\|
|\nabla\chi_m|u\|^2 \\
\geq & \sum_{m=1}^dq^h[\chi_mu]- C h^2\sum_{m=1}^d\|u\|^2\\
\geq & \left[ \left(1-h^{2\rho} \right) (1+O(h^\beta))
\hat{\nu}\,\omega_{\mathrm{min}}(B)^{\frac{2}{k+2}}h^{\frac{2k+2}{k+2}}
- C h^{2-2\beta}\right]\|u\|^2\\
& - \frac{1}{(k+1)^2} h^{2\beta-2\rho}  (1+O(h^\beta)) \left\|
|t|^{k+1} u\right\|^2 .
\end{align*}

It is shown in \cite{HM} (see Remark~5.2) that, given  some constant
 $\widehat C$, then there exists, for any $k\in \mathbb N$,  a constant $C_k$ such that, for $h\in (0,1]$  and any
 eigenvalue $\lambda^0(h)$ of $H^{h,0}$  such that
\[
\lambda^0(h)\leq {\widehat C} h^{(2k+2)/(k+2)}\,,
\]
then any corresponding eigenfunction $U^h$ satisfies
\[
\| |t|^{k+1} u^h \| \leq C_k h^{(k+1)/(k+2)}\|u^h\|\,.
\]
Therefore, we obtain, for some new constants $C_0,\,C_1,\,C_2,\,C$,
\[
\lambda^0(h) \geq
\left(\hat{\nu}\,\omega_{\mathrm{min}}(B)^{\frac{2}{k+2}}-C_0h^{\beta}
-C_1h^{2\rho} - C_2 h^{2\beta-2\rho}\right) h^{\frac{2k+2}{k+2}} - C
h^{2-2\beta}\,.
\]
Putting $\beta=\frac{2}{3(k+2)}, \rho=\frac{1}{3(k+2)}$, this achieves
the proof of the lower bound.

\section{Existence of gaps}
In this section we will prove Theorem~\ref{t:gaps}.

\subsection{A model operator}
First, we will only assume that there exists $x_0\in M$ such that
$\mathbf B(x_0)=0$ and in a neighborhood $U$ of $x_0$ the zero set
of $\mathbf B$ is a smooth oriented hypersurface $S$, and the
magnetic field satisfies in $U$ the estimate~(\ref{YK:B1}) with some
$k\in \ZZ$, $k>0$. As above, we will assume that $U$ coincides with
an open tubular neighborhood of $S$ and choose a diffeomorphism
$\Theta : I\times S\to U$, where $I$ is an open interval
$(-\varepsilon_0,\varepsilon_0)$ with $\varepsilon_0>0$ small
enough, such that $\Theta\left|_{\{0\}\times S}\right.=\mathrm{id}$.
We will also assume that the conditions \eqref{e:diffg} and
\eqref{e:NA} hold.

Suppose that $S$ admits a coordinate system with coordinates
$(s_1,\ldots,s_{n-1})$. Then we have a coordinate system in $U$ with
coordinates $X=(X_0,X_1,\ldots, X_{n-1})$ with $X_0=t\in I$ and
$X_j=s_j, j=1,2,\ldots,n-1$. Thus, $S$ is given by the equation
$t=0$. We have
\[
H^h=\frac{1}{\sqrt{|g(X)|}}\sum_{0\leq \alpha,\beta\leq
n-1}\nabla^h_\alpha \left(\sqrt{|g(X)|} g^{\alpha\beta}(X)
\nabla^h_\beta\right),
\]
where
\[
\nabla^h_\alpha= i h \frac{\partial}{\partial X_\alpha}+A_\alpha(X),
\quad \alpha =0,1,\ldots,n-1.
\]
or, equivalently,
\begin{equation}\label{e:Hh}
H^h=\sum_{0\leq \alpha,\beta\leq n-1} g^{\alpha\beta}(X)
\nabla^h_\alpha\nabla^h_\beta+i h \sum_{0\leq \alpha\leq
n-1}\Gamma^\alpha \nabla^h_\alpha,
\end{equation}
where
\[
\Gamma^\alpha= \frac{1}{\sqrt{|g(X)|}}\sum_{0\leq \beta\leq
n-1}\frac{\partial}{\partial X_\beta} \left(\sqrt{|g(X)|}
g^{\beta\alpha}(X)\right), \quad \alpha =0,1,\ldots,n-1.
\]
By \eqref{e:diffg} and \eqref{e:NA}, we have
\[
A_0(t,s)=0,
\]
and
\[
g=dt^2+G+O(t), \quad t\to 0,
\]
where $G$ is the induced metric on $S$. So we can write
\begin{align*}
g_{00}(t,s)& = 1+\dot{g}_{00}(s)t+O(t^2), \\
g_{0j}(t,s)& =\dot{g}_{0j}(s)t + O(t^2), \\
g_{j\ell}(t,s)& =G_{j\ell}(s)+\dot{g}_{j\ell}(s)t+O(t^2).
\end{align*}
In particular, we have
\[
|g(t,s)|=|G(s)|+O(t), \quad t\to 0\,.
\]
 By assumption, we can write
\[
A_j(t,s)=\omega_{0,0}^{(j)}(s)+\frac{1}{k+1}\omega_{0,1}^{(j)}(s)t^{k+1}
+\frac{1}{k+2}\omega_{0,2}^{(j)}(s)t^{k+2}+O(t^{k+3}), \quad t\to 0\,.
\]

Suppose that a family $u^h\in C^\infty_c(U), h\in (0,h_0],$
satisfies the following assumptions.\\
 For any real $m>0$, there exists $C_m>0$ and
$h_m>0$ such that, for any $h\in (0,h_m)$, we have
\begin{align}
\|t^m u^h\| & \leq C_m h^{\frac{m}{k+2}}\|u^h\|, \label{e:uh} \\
\|t^m \nabla^h_j u^h\|+h\|t^m \frac{\partial u^h}{\partial t}\| &
\leq C_m h^{\frac{m+k+1}{k+2}}\|u^h\|\,, \label{e:u1h}
\end{align}
and
\begin{multline}
\|t^m \nabla^h_j\nabla^h_\ell u^h\|+h\left(\| t^m \frac{\partial
}{\partial t} \nabla^h_j\| + \|t^m \nabla^h_j
\frac{\partial}{\partial t} u^h\|\right)+ h^2\|t^m
\frac{{\partial}^2 u^h}{\partial t^2}\|\\ \leq C_m
h^{\frac{m+2k+2}{k+2}}\|u^h\|\,. \label{e:u2h}
\end{multline}

As shown in \cite{HM}, a family $u^h\in C^\infty_c(U)$, $h\in
(0,h_0)$, such that, for some $C>0$, we have
\[
(H^hu^h,u^h)\leq Ch^{\frac{2k+2}{k+2}}\|u^h\|^2, \quad h\in (0,h_0)\,,
\]
satisfies the conditions~\eqref{e:uh}, \eqref{e:u1h} and
\eqref{e:u2h}.

By \eqref{e:Hh}, it follows that
\begin{multline*}
H^hu^h= - h^2 g^{00}(t,s) \frac{{\partial}^2 u^h}{\partial t^2}+ i h
\sum_{1\leq j\leq n-1} g^{0j}(t,s) \left[ \frac{\partial }{\partial
t} \nabla^h_j + \nabla^h_j \frac{\partial}{\partial t}\right] u^h \\
+ \sum_{1\leq j,\ell\leq n-1} g^{j\ell}(t,s) \nabla^h_j\nabla^h_\ell
u^h - h^2\Gamma^0 \frac{\partial u^h}{\partial t} - h \sum_{1\leq
j\leq n-1}\Gamma^j \nabla^h_ju^h\,.
\end{multline*}
Observe that the first and the third terms can only contribute to
the terms of order $h^{\frac{2k+2}{k+2}}$. More precisely, we have
\begin{multline*}
H^hu^h= - h^2 \frac{{\partial}^2 u^h}{\partial t^2} +\sum_{1\leq
j,\ell\leq n-1} G^{j\ell}(s) \left(i h \frac{\partial}{\partial
s_j}+
\omega_{0,0}^{(j)}(s)+\frac{1}{k+1}\omega_{0,1}^{(j)}(s)t^{k+1}\right)
  \\ \times \left(i h \frac{\partial}{\partial s_\ell}+
\omega_{0,0}^{(\ell)}(s)+\frac{1}{k+1}\omega_{0,1}^{(\ell)}(s)t^{k+1}\right)
u^h+ O(h^{\frac{2k+3}{k+2}})\,.
\end{multline*}

This fact was stated in \cite[Theorem 2.7]{HM}. For the proof of
Theorem~\ref{t:gaps}, we have to improve the remainder
$O(h^{\frac{2k+3}{k+2}})$ in the last formula. For this purpose we
will take into account further terms in the expansion of $H^hu^h$ in
powers of $h^{\frac{1}{k+2}}$. This leads us to introduce a new
model operator $H^h_0$ given by
\begin{align*}
H^h_0 = & -h^2
 \frac{{\partial}^2}{\partial t^2} - h^2 \dot{g}^{00}(s) t \frac{{\partial}^2}{\partial
 t^2}\\
 & + 2 i h \sum_{1\leq j\leq n-1} \dot{g}^{0j}(s) t \left(i h
\frac{\partial}{\partial s_j}+
\omega_{0,0}^{(j)}(s)+\frac{1}{k+1}\omega_{0,1}^{(j)}(s)t^{k+1}\right)
\frac{\partial}{\partial t}\\
& + i h \sum_{1\leq j\leq n-1} \dot{g}^{0j}(s)
\omega_{0,1}^{(j)}(s)t^{k+1}\\
&+ \sum_{1\leq j,\ell\leq n-1} G^{j\ell}(s)\Big(i h
\frac{\partial}{\partial s_j}+
\omega_{0,0}^{(j)}(s)+\frac{1}{k+1}\omega_{0,1}^{(j)}(s)t^{k+1}
\\ & +\frac{1}{k+2}\omega_{0,2}^{(j)}(s)t^{k+2}\Big)
\Big(i h \frac{\partial}{\partial s_\ell}+
\omega_{0,0}^{(\ell)}(s)+\frac{1}{k+1}\omega_{0,1}^{(\ell)}(s)t^{k+1}\\
& +\frac{1}{k+2} \omega_{0,2}^{(j)}(s)t^{k+2}\Big)\\ & + \sum_{1\leq
j,\ell\leq n-1} \dot{g}^{j\ell}(s)t \left(i h
\frac{\partial}{\partial s_j}+
\omega_{0,0}^{(j)}(s)+\frac{1}{k+1}\omega_{0,1}^{(j)}(s)t^{k+1}\right)
\\ & \times \left(i h \frac{\partial}{\partial s_\ell}+
\omega_{0,0}^{(\ell)}(s)+\frac{1}{k+1}\omega_{0,1}^{(\ell)}(s)t^{k+1}\right) \\
 & - h^2\Gamma^0_0(s) \frac{\partial}{\partial t}- h \sum_{1\leq j\leq n-1}\Gamma^j_0(s) \left(i h
\frac{\partial}{\partial s_j} +
\omega_{0,0}^{(j)}(s)+\frac{1}{k+1}\omega_{0,1}^{(j)}(s)t^{k+1}\right)\,.
\end{align*}

\begin{lemma}\label{l:model}
Suppose that a family $u^h\in C^\infty_c(U)$, $h\in (0,h_0)$,
satisfies the conditions~\eqref{e:uh}, \eqref{e:u1h} and
\eqref{e:u2h}. Then, there exists $C>0$ such that, for any $h\in
(0,h_0)$, we have
\[
\|H^hu^h-H^h_0u^h\| \leq Ch^2\|u^h\|\,.
\]
\end{lemma}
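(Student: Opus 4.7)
The plan is to Taylor-expand every coefficient appearing in the coordinate formula \eqref{e:Hh} for $H^h$ around the hypersurface $S = \{t = 0\}$, recognize that the explicit low-order terms reproduce $H^h_0$, and bound each remaining contribution using one of the weighted bounds \eqref{e:uh}, \eqref{e:u1h}, \eqref{e:u2h}. First, since $A_0 = 0$ gives $\nabla^h_0 = ih\,\partial_t$, I would symmetrize the cross terms as
\[
g^{0j}\nabla^h_0\nabla^h_j + g^{j0}\nabla^h_j\nabla^h_0 = 2\,g^{0j}\nabla^h_j\nabla^h_0 + g^{0j}[\nabla^h_0,\nabla^h_j],
\]
and compute $[\nabla^h_0,\nabla^h_j] = ih\,\partial_t A_j = ih\,\omega_{0,1}^{(j)}(s)\,t^k + O(h\,t^{k+1})$. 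Substituting the expansions $g^{00}=1+\dot g^{00}(s)t+O(t^2)$, $g^{0j}=\dot g^{0j}(s)t+O(t^2)$, $g^{j\ell}=G^{j\ell}(s)+\dot g^{j\ell}(s)t+O(t^2)$, together with
\[
A_j = \omega_{0,0}^{(j)}(s) + \tfrac{1}{k+1}\omega_{0,1}^{(j)}(s)t^{k+1} + \tfrac{1}{k+2}\omega_{0,2}^{(j)}(s)t^{k+2} + O(t^{k+3})
\]
and $\Gamma^\alpha = \Gamma^\alpha_0(s) + O(t)$, one checks that the retained low-order terms are exactly those making up $H^h_0$; in particular the seemingly anomalous summand $ih\,\dot g^{0j}(s)\,\omega_{0,1}^{(j)}(s)\,t^{k+1}$ in $H^h_0$ arises precisely from the leading piece of $g^{0j}[\nabla^h_0,\nabla^h_j]$.

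The difference $(H^h - H^h_0)u^h$ then splits into a finite sum of contributions of a few generic shapes. The $O(t^2)$ remainders from $g^{00}$, $g^{0j}$, $g^{j\ell}$ produce terms of the form $t^2\,h^2\,\partial_t^2 u^h$, $t^2\,h\,\partial_t\nabla^h_j u^h$ and $t^2\,\nabla^h_j\nabla^h_\ell u^h$, all controlled by \eqref{e:u2h} with $m=2$, which produces the exponent $(2+2k+2)/(k+2) = 2$. The $O(t^{k+3})$ remainders coming from the Taylor expansion of $A_j$ enter either as a zero-order perturbation (bounded via \eqref{e:uh}), or as first-order perturbations $O(t^{k+3})\nabla^h_j u^h$ and $O(t^{k+3})\,h\,\partial_t u^h$, bounded via \eqref{e:u1h} with $m=k+3$, yielding $h^{(k+3+k+1)/(k+2)}\|u^h\| = h^2\|u^h\|$; the same $O(t^{k+3})$ bound applies to $\dot g^{j\ell}(s)\,t\,(A_j - \tilde A_j^{(1)})$, to the cross term $\dot g^{0j}(s)\,t\,(A_j-\tilde A_j^{(1)})\,ih\,\partial_t$, and to the subleading piece of $g^{0j}[\nabla^h_0,\nabla^h_j]$. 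Finally the $O(t)$ remainders from $\Gamma^\alpha$ produce $h^2\,t\,\partial_t u^h$ and $h\,t\,\nabla^h_j u^h$, both bounded by $C h^2\|u^h\|$ via \eqref{e:u1h} with $m=1$.

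The main difficulty is not any individual estimate but the bookkeeping: one must identify which term of the Taylor expansion of each coefficient ($g^{\alpha\beta}$, $A_j$, $\Gamma^\alpha$, and the commutator $[\nabla^h_0,\nabla^h_j]$) is retained in $H^h_0$. In particular one must carry the expansion of $A_j$ up to the $\omega_{0,2}^{(j)} t^{k+2}$ term inside the leading $G^{j\ell}$ piece, while stopping one order earlier in the cross term $\dot g^{0j}(s)\,t$ and in the correction $\dot g^{j\ell}(s)\,t$, because in those cases the extra factor of $t$ already makes the missing $\omega_{0,2}^{(j)}$-contribution $O(t^{k+3})$ and hence absorbable. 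Once this classification is complete, the proof reduces to verifying that in every type the power of $h$ produced by the relevant weighted bound telescopes to exactly $h^2$, and summing the finitely many contributions.
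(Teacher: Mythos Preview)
Your approach is correct and is precisely the argument the paper leaves implicit: the paper states Lemma~\ref{l:model} without proof, having just motivated the definition of $H^h_0$ as the operator retaining the next order in the Taylor expansion of the coefficients of \eqref{e:Hh} around $t=0$. Your term-by-term bookkeeping (including the identification of the commutator piece $g^{0j}[\nabla^h_0,\nabla^h_j]$ as the source of the $ih\,\dot g^{0j}\omega_{0,1}^{(j)}t^{k+1}$ term, and the observation that in the $\dot g^{j\ell}t$ and $\dot g^{0j}t$ corrections one may truncate $A_j$ one order earlier) is exactly what is needed, and the exponent-matching against \eqref{e:uh}--\eqref{e:u2h} is carried out correctly in each case.
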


\subsection{Approximate eigenfunctions: main result}
Now we additionally assume that there exists $x_1\in S$ such that
$|\omega_{0,1}(x_1)|= \omega_{\mathrm{min}}(B)$, a neighborhood
$\mathcal V$ of $x_1$ in $S$ and a constant $C_1>0$ such that, for
all $x\in \mathcal V$,
\[
C_1d_S(x,x_1)^2  \leq |\omega_{0,1}(x)|-\omega_{\mathrm{min}}(B)\leq
C_1d_S(x,x_1)^2\,.
\]
Take normal coordinate system $f: U(x_1) \subset S \to \RR^{n-1}$ on
$S$ defined in a neighborhood $U(x_1)$ of $x_1$, where $f(U(x_1))
=B(0,r)$ is a ball in $\RR^{n-1}$ centered at the origin and
$f(x_1)=0$. As above, we will denote local coordinates by
$s=(s_1,s_2,\ldots,s_{n-1})$. Note that
\[
\omega_{\mathrm{min}}(B)=\left(\sum\limits_{j=1}^{n-1}|\omega_{0,1}^{(j)}(0)|^2\right)^{1/2}.
\]

Consider the Euclidean space $\RR^{n-1}$ with coordinates
$(\sigma_1,\sigma_2,\ldots,\sigma_{n-1})$.  Take the unit vector
\[
e_\omega=\frac{1}{\omega_{\mathrm{min}}(B)}\left(\omega_{0,1}^{(1)}(0),
\ldots,\omega_{0,1}^{(n-1)}(0)\right)\in \RR^{n-1},
\]
and complete it to an orthonormal base $(e_1=e_\omega, e_2, \ldots,
e_{n-1})$ in $\RR^{n-1}$. Denote by ${\widehat e}_\omega, {\widehat
e}_2, \ldots, {\widehat e}_{n-1}$ the corresponding first order
differential operators with constant coefficients in $\RR^{n-1}$. In
particular, we have
\[
{\widehat e}_\omega = \frac{1}{\omega_{\mathrm{min}}(B)}
\sum_{j=1}^{n-1} \omega_{0,1}^{(j)}(0) \frac{\partial}{\partial
\sigma_j}\,.
\]
The Laplacian $\Delta=-\sum \frac{\partial^2}{\partial \sigma_j^2}$
in $\RR^{n-1}$ can be written as
\[
\Delta=\Delta_{\omega}+\Delta_{\omega^\bot}\,,
\]
where
\[
\Delta_{\omega}=-{\widehat e}^2_\omega, \quad
\Delta_{\omega^\bot}=-\sum_{j=2}^{n-1} {\widehat e}^2_j\,.
\]
Consider a second order differential operator $K$ in $\RR^{n-1}$
given by
\begin{equation}\label{e:defK}
K=\frac 12 \frac{\partial^2 \lambda_0}{\partial\alpha^2}
(\alpha_{\rm min},1) \Delta_\omega + \Delta_{\omega^\bot} +
 \sum_{r,m} \Omega_{rm} \sigma_r \sigma_m +A\,,
\end{equation}
where
\[
\Omega_{rm}=\omega_{\mathrm{min}}(B)^{-\frac{2k+2}{k+2}}\Big[
\frac{\hat\nu}{2(k+2)}\frac{\partial^2 |\omega_{0,1}|^2}{\partial
s_r\partial s_m}(0)+ \alpha_{\rm min}^2 \sum_j \frac{\partial
\omega_{0,1}^{(j)}}{\partial s_r}(0) \frac{\partial
\omega_{0,1}^{(j)}}{\partial s_m}(0) \Big]
\]
and
\[
\begin{split}
A =  & -\dot{g}^{00}(0) \int \tau \frac{{\partial}^2
u^0_\alpha}{\partial
 \tau^2}(\tau) u^0_{\alpha_{\mathrm{min}}}(\tau) d\tau
+ i \omega_{\mathrm{min}}(B)^{-1} \sum_{j=1}^{n-1} \frac{\partial
\omega_{0,1}^{(j)}}{\partial
\sigma_j}(0) \alpha_{\rm min} \\
& +2\omega_{\mathrm{min}}(B)^{-2} \sum_{j=1}^{n-1}
\omega_{0,1}^{(j)}(0)\omega_{0,2}^{(j)}(0)\int
\frac{\tau^{k+2}}{k+2} \Big(\frac{\tau^{k+1}}{k+1}-\alpha_{\rm min}
\Big)(u^0_{\alpha_{\mathrm{min}}}(\tau))^2 d\tau\\
& + \omega_{\mathrm{min}}(B)^{-2} \sum_{1\leq j,\ell\leq n-1}
\dot{g}^{j\ell}(0) \omega_{0,1}^{(j)}(0) \omega_{0,1}^{(\ell)}(0)
\\ & \times \int \tau \left(\frac{\tau^{k+1}}{k+1}-\alpha_{\rm min}
\right)^2(u^0_{\alpha_{\mathrm{min}}}(\tau))^2 d\tau\,.
\end{split}
\]

It should be noted that we have indeed the operator $K=K_{\alpha_{\mathrm{min}}}$ attached at
any minimum $\alpha_{\mathrm{min}}$ and we can do the same
construction at any $\alpha_{\mathrm{min}}$.

A construction of approximate eigenfunctions of the operator $H^h_D$
in $D=\overline{U}$ is given in the next theorem.

\begin{theorem}\label{YK:l1}
For any critical point $\alpha_{\mathrm{min}}$ and
for  any $\lambda$ in the spectrum of $K=K_{\alpha_{\mathrm{min}}}$, there exist $C_1>0$,
$h_1>0$, and a family $U^h\in C^\infty_c(D)$, $h\in (0,h_1]$, such
that, for any $h\in (0,h_1]$, we have
\[
\|\left(H^{h}_D-z(h)\right)U^h\|_{L^2}\leq C_1
h^{\frac{4k+7}{2k+4}}\|U^h\|_{L^2}\,,
\]
with
\[
z(h)= \hat{\nu} \omega_{\mathrm{min}}(B)^{\frac{2}{k+2}}
h^{\frac{2k+2}{k+2}}+ \lambda h^{\frac{2k+3}{k+2}}\,.
\]
\end{theorem}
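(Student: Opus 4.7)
The plan is to construct an explicit WKB-style quasimode using a two-scale ansatz. Lemma~\ref{l:model} first reduces the problem to the model operator $H^h_0$ at the cost of an $O(h^2)$ error, which is comfortably below the target $h^{(4k+7)/(2(k+2))} = h^{2 - 1/(2(k+2))}$. Because $S$ is locally simply connected at $x_1$, the closed one-form $\omega_{0,0}$ is locally exact, so a gauge transformation $u \mapsto e^{-i\varphi/h} u$ with $d\varphi = \omega_{0,0}$ reduces us to the case $\omega_{0,0} = 0$ near $x_1$. I then choose normal coordinates on $S$ at $x_1$ aligned with the orthonormal frame $(e_\omega, e_2, \ldots, e_{n-1})$ so that $\omega_{0,1}(0)$ points along $e_\omega$.

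The two scales are $\tau = \omega_{\mathrm{min}}(B)^{1/(k+2)}\, t\, h^{-1/(k+2)}$ in the normal direction (the intrinsic scale of the magnetic ground state $u^0_{\alpha_{\mathrm{min}}}$) and $\sigma = s\, h^{-1/(2(k+2))}$ in the tangential directions (the scale dictated by balancing the quadratic miniwell $|\omega_{0,1}(s)|^2 - \omega_{\mathrm{min}}(B)^2 \sim |s|^2$ against the target correction $h^{(2k+3)/(k+2)}$). With $\epsilon := h^{1/(2(k+2))}$, the ansatz reads
\[
U^h(t,s) = \chi(t)\,\tilde{\chi}(s)\, e^{i \alpha_{\mathrm{min}} \omega_{\mathrm{min}}(B)^{1/(k+2)} s_1 / h^{1/(k+2)}}\bigl[ u^0_{\alpha_{\mathrm{min}}}(\tau)\, f(\sigma) + \epsilon\, u_1(\tau,\sigma) + \epsilon^2\, u_2(\tau,\sigma)\bigr],
\]
where $f$ is an eigenfunction of $K$ for the eigenvalue $\lambda$, $\chi, \tilde{\chi}$ are smooth cutoffs supported in $U$, and $u_1, u_2$ are correctors to be determined. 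The plane-wave factor tunes the effective $\alpha$-parameter in $Q(\cdot, \omega_{\mathrm{min}}(B))$ so that the leading profile realizes the bottom $\hat\nu\, \omega_{\mathrm{min}}(B)^{2/(k+2)}$ via~\eqref{YK:alphabeta}. The hypothesis on the Hessian of $|\omega_{0,1}|$ at $x_1$ ensures that $\sum \Omega_{rm}\sigma_r\sigma_m$ is positive definite, so $K$ has discrete spectrum and $f \in \cS(\RR^{n-1})$.

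The core step is an asymptotic expansion $H^h_0 U^h = \sum_{j \geq 0} \epsilon^j R_j$ obtained by substituting the scaled variables and Taylor expanding each coefficient of $H^h_0$ in $(s,t)$ at $(0,0)$. The leading term $R_0$ equals $h^{(2k+2)/(k+2)}\hat\nu\, \omega_{\mathrm{min}}(B)^{2/(k+2)}\, U^h$. The $L^2(\RR_\tau)$-projection of $R_1$ onto $u^0_{\alpha_{\mathrm{min}}}$ vanishes by the critical-point identity~\eqref{e:lambda01} together with the fact that $x_1$ is a critical point of $|\omega_{0,1}|$, and the range component of $R_1$ is killed by choosing $u_1$ via the Fredholm alternative for $Q(\alpha_{\mathrm{min}}, 1) - \hat\nu$, using~\eqref{e:Qlambda}. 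Projecting $R_2$ onto $u^0_{\alpha_{\mathrm{min}}}$ then produces exactly $Kf = \lambda f$: the coefficient $\frac{1}{2}\partial_\alpha^2 \lambda_0(\alpha_{\mathrm{min}}, 1)$ in front of $\Delta_\omega$ comes via~\eqref{e:lambda02} from two $\partial_{s_1}$-derivatives combined with~\eqref{e:Qlambda}; the $\Delta_{\omega^\bot}$ from the free tangential Laplacian in the perpendicular directions; the quadratic term from the Hessian of $|\omega_{0,1}|^2$ combined with first $s$-derivatives of $\omega_{0,1}$ weighted by $\alpha_{\mathrm{min}}^2$; and the constant $A$ from the $\dot g^{00},\, \dot g^{0j},\, \dot g^{j\ell},\, \omega_{0,2}$ and $\partial_{s_j}\omega_{0,1}^{(j)}$ corrections, evaluated via~\eqref{e:lambda1}, \eqref{e:lambda01} and~\eqref{e:norm}. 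The second corrector $u_2$ is then chosen to cancel the range part of $R_2$, yielding a residual of order $O(\epsilon^3 h^{(2k+2)/(k+2)}) = O(h^{(4k+7)/(2(k+2))})$ in $L^2$.

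The main obstacle is the bookkeeping required to check that the $R_2$-projection assembles exactly into $K$ with the coefficients in~\eqref{e:defK}. Each of the six groups of terms in $H^h_0$ must be expanded to the correct joint order in $(\tau, \sigma, \epsilon)$, the cross-terms mixing $\partial_t$ and $\partial_{s_j}$ (responsible for the $\dot g^{0j}$ contribution and the $\int \tau\, \partial_\tau^2 u^0_{\alpha_{\mathrm{min}}}\cdot u^0_{\alpha_{\mathrm{min}}}\, d\tau$ piece of $A$) tracked carefully, and every inner product in $L^2(\RR_\tau)$ against $u^0_{\alpha_{\mathrm{min}}}$ reduced using~\eqref{e:Qlambda0}, \eqref{e:lambda1}, \eqref{e:Qlambda}, and~\eqref{e:norm}. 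Since $U^h$ is compactly supported in $D$ for $h$ small enough, the desired estimate for $H^h_D$ follows from the one for $H^h_0$ by Lemma~\ref{l:model}.
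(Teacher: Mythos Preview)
Your overall architecture---reduction to $H^h_0$ via Lemma~\ref{l:model}, the gauge twist and plane-wave factor, the two scales $\tau$ and $\sigma$, and the expansion with correctors $u_1, u_2$---matches the paper's and is sound. The paper organizes the same computation through a Grushin problem (so the correctors arise as $E_0\widehat P_1 R_0^-$ etc.), while you set it up as a direct ansatz; these are equivalent ways of solving the same hierarchy of transport equations.

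However, there is a real gap in your treatment of the tangential eigenfunction $f$. You assert that the non-degenerate miniwell hypothesis makes $\sum\Omega_{rm}\sigma_r\sigma_m$ positive definite and conclude that $K$ has discrete spectrum with $f\in\cS(\RR^{n-1})$. The positive-definiteness of $\Omega$ is correct, but $K$ has the form $\tfrac12\partial_\alpha^2\lambda_0(\alpha_{\mathrm{min}},1)\,\Delta_\omega+\Delta_{\omega^\bot}+\Omega+A$, and the coefficient of $\Delta_\omega$ is only known to be \emph{nonnegative} at a minimum (and could vanish at a general critical point $\alpha_{\mathrm{min}}$, which the theorem allows). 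Whether it is strictly positive is exactly the open Conjecture~\ref{c:main}. If $\partial_\alpha^2\lambda_0(\alpha_{\mathrm{min}},1)=0$, the operator $K$ is degenerate in the $e_\omega$ direction and has purely continuous spectrum $[\lambda_0(K_0),\infty)$, so for a generic $\lambda\in\sigma(K)$ there is no $L^2$ eigenfunction $f$ at all. The paper handles this case separately (Lemma~\ref{l:quasimodesK}): instead of an exact eigenfunction one builds an $h$-dependent quasimode $w^h_0$ for $K$ with $\|(K-\lambda)w^h_0\|=O(h^{1/(2(k+2))})$, realized as a narrow Gaussian in the $e_\omega$ direction centered at the appropriate level set of the quadratic potential, tensored with the ground state of the transverse harmonic oscillator. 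One then feeds $w^h_0$ (rather than $f$) into the ansatz and checks that the derivative blow-up $\|\partial_\sigma^\alpha w^h_0\|\le C_\alpha h^{-|\alpha|/(2(k+2))}$ is exactly compensated by the expansion, so the final residual is still $O(h^{(4k+7)/(2(k+2))})$. Your argument needs this second branch to cover the full statement.
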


\begin{proof}
 The proof of this theorem is long and will be divided into several
steps.
\end{proof}

\subsection{Formal expansions near the minimum}
Choose a function $\phi\in C^\infty(B(0,r))$ such that
$d\phi=\omega_{0,0}$. For some $\alpha\in \RR$, we make a gauge
transformation
\begin{multline*}
u(t,s)=\exp\left(-i\frac{\phi(s)}{h} \right)\exp\left( i\frac{\alpha
\sum\limits_{j=1}^{n-1}\omega_{0,1}^{(j)}(0)s_j}{\omega_{\mathrm{min}}(B)^{\frac{k+1}{k+2}}
h^{\frac{1}{k+2}}}\right) v(t,s), \\ s\in B(0,r),\quad t\in \RR.
\end{multline*}
Then we have
\begin{equation}\label{e:HhPh}
H^{h}_0u(t,s) = \exp\left(-i\frac{\phi(s)}{h} \right)\exp\left(
i\frac{\alpha
\sum\limits_{j=1}^{n-1}\omega_{0,1}^{(j)}(0)s_j}{\omega_{\mathrm{min}}(B)^{\frac{k+1}{k+2}}
h^{\frac{1}{k+2}}} \right) P^{h}v(t,s),
\end{equation}
where
\[
P^{h}=P^h_1+P^h_2+P^h_3+P^h_4+P^h_5+P^h_6\,,
\]
and
\begin{align*}
P^h_1 = & - h^2
 \frac{{\partial}^2}{\partial t^2} - h^2 \dot{g}^{00}(s) t \frac{{\partial}^2}{\partial
 t^2},\\
P^h_2= & 2 i h \sum_{1\leq j\leq n-1} \dot{g}^{0j}(s) t \\ & \times
\left(i h \frac{\partial}{\partial s_j}+
\frac{1}{k+1}\omega_{0,1}^{(j)}(s)t^{k+1}-\alpha
\omega_{\mathrm{min}}(B)^{-\frac{k+1}{k+2}}
h^{\frac{k+1}{k+2}}\omega_{0,1}^{(j)}(0)\right) \frac{\partial
}{\partial t},\\
 P^h_3= & i h \sum_{1\leq j\leq n-1} \dot{g}^{0j}(s)
\omega_{0,1}^{(j)}(s)t^{k+1},\\
P^h_4 = & \sum_{1\leq j,\ell\leq n-1} G^{j\ell}(s) \Big(i h
\frac{\partial}{\partial s_j}+
\frac{1}{k+1}\omega_{0,1}^{(j)}(s)t^{k+1} -\alpha
\omega_{\mathrm{min}}(B)^{-\frac{k+1}{k+2}}
h^{\frac{k+1}{k+2}}\omega_{0,1}^{(j)}(0)\\ &
+\frac{1}{k+2}\omega_{0,2}^{(j)}(s)t^{k+2}\Big) \Big(i h
\frac{\partial}{\partial s_\ell}+
\frac{1}{k+1}\omega_{0,1}^{(\ell)}(s)t^{k+1}\\ & -\alpha
\omega_{\mathrm{min}}(B)^{-\frac{k+1}{k+2}}
h^{\frac{k+1}{k+2}}\omega_{0,1}^{(j)}(0)
+\frac{1}{k+2}\omega_{0,2}^{(j)}(s)t^{k+2}\Big),\\
 P^h_5 = &
\sum_{1\leq j,\ell\leq n-1} \dot{g}^{j\ell}(s)t \Big(i h
\frac{\partial}{\partial s_j}+
\frac{1}{k+1}\omega_{0,1}^{(j)}(s)t^{k+1} \\ & -\alpha
\omega_{\mathrm{min}}(B)^{-\frac{k+1}{k+2}}
h^{\frac{k+1}{k+2}}\omega_{0,1}^{(j)}(0)\Big) \Big(i h
\frac{\partial}{\partial s_\ell}+
\frac{1}{k+1}\omega_{0,1}^{(\ell)}(s)t^{k+1}\\ & -\alpha
\omega_{\mathrm{min}}(B)^{-\frac{k+1}{k+2}}
h^{\frac{k+1}{k+2}}\omega_{0,1}^{(j)}(0)\Big),\\
P^h_6= & - h^2\Gamma^0_0(s) \frac{\partial }{\partial t}  - h
\sum_{1\leq j\leq n-1}\Gamma^j_0(s) \Big(i h
\frac{\partial}{\partial s_j}
+\frac{1}{k+1}\omega_{0,1}^{(j)}(s)t^{k+1}\\ & -\alpha
\omega_{\mathrm{min}}(B)^{-\frac{k+1}{k+2}}
h^{\frac{k+1}{k+2}}\omega_{0,1}^{(j)}(0)\Big)\,.
\end{align*}
We now make the change of variables
\begin{equation}\label{e:change}
t=\omega_{\mathrm{min}}(B)^{-\frac{1}{k+2}} h^{1/(k+2)}\tau, \quad
s=h^{1/2(k+2)}\sigma\,,
\end{equation}
and expand the operators in powers of $h$ as $h\to 0$ up to the
terms of order $O(h^{(4k+7)/(2(k+2))})$. For the first three terms,
we obtain
\begin{align*}
\widehat P_1^h= & - h^{\frac{2k+2}{k+2}}
\omega_{\mathrm{min}}(B)^{\frac{2}{k+2}}
\frac{{\partial}^2}{\partial \tau^2} - h^{\frac{2k+3}{k+2}}
\omega_{\mathrm{min}}(B)^{\frac{1}{k+2}}  \dot{g}^{00}(s) \tau
\frac{{\partial}^2}{\partial \tau^2}+ O(h^{\frac{4k+7}{2(k+2)}}),\\
\widehat P_2^h = & 2 i h^{\frac{2k+3}{k+2}} \sum_{1\leq j\leq n-1}
\dot{g}^{0j}(0) \omega_{\mathrm{min}}(B)^{-\frac{k+1}{k+2}}
\omega_{0,1}^{(j)}(0) \tau
\left(\frac{\tau^{k+1}}{k+1}-\alpha\right) \frac{\partial }{\partial
\tau}+ O(h^{\frac{4k+7}{2(k+2)}}),\\
\widehat P_3^h  = & i h^{\frac{2k+3}{k+2}} \sum_{1\leq j\leq n-1}
\dot{g}^{0j}(0)
\omega_{0,1}^{(j)}(0)\omega_{\mathrm{min}}(B)^{-\frac{k+1}{k+2}}
\tau^{k+1}+ O(h^{\frac{4k+7}{2(k+2)}})\,.
\end{align*}
For the analysis of the fourth term, let us start with the
computation of
\begin{align*} I = & \Big(i h \frac{\partial}{\partial s_j}+
\frac{1}{k+1}\omega_{0,1}^{(j)}(s)t^{k+1}-\alpha
\omega_{\mathrm{min}}(B)^{-\frac{k+1}{k+2}}
h^{\frac{k+1}{k+2}}\omega_{0,1}^{(j)}(0)\\
& +\frac{1}{k+2}\omega_{0,2}^{(j)}(s)t^{k+2}\Big)  \\ & \times
\Big(i h \frac{\partial}{\partial s_\ell}+
\frac{1}{k+1}\omega_{0,1}^{(\ell)}(s)t^{k+1}-\alpha
\omega_{\mathrm{min}}(B)^{-\frac{k+1}{k+2}}
h^{\frac{k+1}{k+2}}\omega_{0,1}^{(j)}(0)\\ &
+\frac{1}{k+2}\omega_{0,2}^{(j)}(s)t^{k+2}\Big).
\end{align*}
After the change of variables, we obtain
\begin{align*}
{\widehat I} = & \omega_{\mathrm{min}}(B)^{-\frac{2k+2}{k+2}}
h^{\frac{2k+2}{k+2}}\left(\frac{\tau^{k+1}}{k+1}\omega_{0,1}^{(j)}(h^{1/2(k+2)}\sigma)-\alpha
\omega_{0,1}^{(j)}(0)\right) \\ & \times
\left(\frac{\tau^{k+1}}{k+1}\omega_{0,1}^{(\ell)}(h^{1/2(k+2)}\sigma)-\alpha
\omega_{0,1}^{(\ell)}(0)\right)\\
& + i \omega_{\mathrm{min}}(B)^{-\frac{k+1}{k+2}}
h^{\frac{2k+2}{k+2}+\frac{1}{2(k+2)}} \Big[ \frac{\partial}{\partial
\sigma_j}
\left(\frac{\tau^{k+1}}{k+1}\omega_{0,1}^{(\ell)}(h^{1/2(k+2)}\sigma)-\alpha
\omega_{0,1}^{(\ell)}(0)\right) \\
& + \left(\frac{\tau^{k+1}}{k+1}
\omega_{0,1}^{(j)}(h^{1/2(k+2)}\sigma)-\alpha
\omega_{0,1}^{(j)}(0)\right) \frac{\partial}{\partial
\sigma_\ell}\Big]\\
& + h^{\frac{2k+3}{k+2}} \Big[ - \frac{\partial^2}{\partial
\sigma_j\partial \sigma_\ell}
+\omega_{\mathrm{min}}(B)^{-\frac{2k+3}{k+2}} \frac{\tau^{k+2}}{k+2}
\\ & \times
\Big(\left(\frac{\tau^{k+1}}{k+1}\omega_{0,1}^{(j)}(h^{1/2(k+2)}\sigma)-\alpha
\omega_{0,1}^{(j)}(0)\right)\omega_{0,2}^{(\ell)}(h^{1/2(k+2)}\sigma)
 \\
& +
\left(\frac{\tau^{k+1}}{k+1}\omega_{0,1}^{(\ell)}(h^{1/2(k+2)}\sigma)-\alpha
\omega_{0,1}^{(\ell)}(0)\right)\omega_{0,2}^{(j)}(h^{1/2(k+2)}\sigma)\Big)\Big]\,.
\end{align*}

We can write
\begin{multline}\label{e:exp-omega}
\omega_{0,1}^{(j)}(h^{1/2(k+2)}\sigma)=\omega_{0,1}^{(j)}(0)+h^{\frac{1}{2(k+2)}}
\sum \frac{\partial \omega_{0,1}^{(j)}}{\partial s_r}(0)\sigma_r
\\ +\frac{1}{2}h^{\frac{1}{k+2}}\sum \frac{\partial^2
\omega_{0,1}^{(j)}}{\partial s_r \partial s_m}(0)\sigma_r \sigma_m
+O(h^{\frac{3}{2(k+2)}}), \quad h\to 0\,.
\end{multline}
Since $s=0$ is a minimum of the function
\begin{equation}\label{e:omega2}
|\omega_{0,1}(s)|^2=\sum_{j,\ell}G^{j\ell}(s) \omega_{0,1}^{(j)}(s)
\omega_{0,1}^{(\ell)}(s)\,,
\end{equation}
we have
\begin{equation}\label{e:domega2}
\left( \frac{\partial }{\partial s_r}|\omega_{0,1}|^2
\right)(0)=2\sum_{j} \frac{\partial \omega_{0,1}^{(j)}}{\partial
s_r}(0)\omega_{0,1}^{(j)}(0)=0\,.
\end{equation}
Using \eqref{e:exp-omega}, we obtain that
\begin{multline*}
\left(\frac{\tau^{k+1}}{k+1}\omega_{0,1}^{(j)}(h^{1/2(k+2)}\sigma)-\alpha
\omega_{0,1}^{(j)}(0)\right)
\left(\frac{\tau^{k+1}}{k+1}\omega_{0,1}^{(\ell)}(h^{1/2(k+2)}\sigma)-\alpha
\omega_{0,1}^{(\ell)}(0)\right) \\
\begin{aligned}
= &
\omega_{0,1}^{(j)}(0)\omega_{0,1}^{(\ell)}(0)\left(\frac{\tau^{k+1}}{k+1}-\alpha
\right)^2  +  h^{\frac{1}{2(k+2)}} \Big( \omega_{0,1}^{(\ell)}(0)
\sum \frac{\partial \omega_{0,1}^{(j)}}{\partial s_r}(0)\sigma_r
\\
& + \omega_{0,1}^{(j)}(0)  \sum \frac{\partial
\omega_{0,1}^{(\ell)}}{\partial s_r}(0)\sigma_r \Big)
\frac{\tau^{k+1}}{k+1} \left(\frac{\tau^{k+1}}{k+1}-\alpha\right)\\
& +  h^{\frac{1}{k+2}} \Big[\frac{1}{2}  \Big(
\omega_{0,1}^{(\ell)}(0) \sum \frac{\partial^2
\omega_{0,1}^{(j)}}{\partial s_r
\partial s_m}(0)\sigma_r \sigma_m\\
& + \omega_{0,1}^{(j)}(0)   \sum \frac{\partial^2
\omega_{0,1}^{(\ell)}}{\partial s_r \partial s_m}(0)\sigma_r
\sigma_m \Big) \frac{\tau^{k+1}}{k+1}
\left(\frac{\tau^{k+1}}{k+1}-\alpha\right) \\
& + \sum_{r,m} \frac{\partial \omega_{0,1}^{(j)}}{\partial s_r}(0)
\frac{\partial \omega_{0,1}^{(\ell)}}{\partial s_m}(0)\sigma_r
\sigma_m \frac{\tau^{2k+2}}{(k+1)^2}\Big]+ O(h^{\frac{3}{2(k+2)}})\,.
\end{aligned}
\end{multline*}
Similarly, we have
\begin{multline*}
\frac{\partial}{\partial \sigma_j}
\left(\frac{\tau^{k+1}}{k+1}\omega_{0,1}^{(\ell)}(h^{1/2(k+2)}\sigma)-\alpha
\omega_{0,1}^{(\ell)}(0)\right)\\ +
\left(\frac{\tau^{k+1}}{k+1}\omega_{0,1}^{(j)}(h^{1/2(k+2)}\sigma)-\alpha
\omega_{0,1}^{(j)}(0)\right) \frac{\partial}{\partial
\sigma_\ell}\\
= \Big[\omega_{0,1}^{(j)}(0) \frac{\partial}{\partial \sigma_\ell}+
\omega_{0,1}^{(\ell)}(0)
\frac{\partial}{\partial \sigma_j}\Big]\left(\frac{\tau^{k+1}}{k+1}-\alpha\right)\\
+ h^{\frac{1}{2(k+2)}} \left[\frac{\partial
\omega_{0,1}^{(\ell)}}{\partial \sigma_j}(0) +\sum \frac{\partial
\omega_{0,1}^{(\ell)}}{\partial \sigma_r}(0)\sigma_r\right]
\frac{\tau^{k+1} }{k+1} + O(h^{\frac{1}{k+2}})\,.
\end{multline*}
Thus, we get the following expression for ${\widehat I}$:
\begin{align*}
{\widehat I}= & \omega_{\mathrm{min}}(B)^{-\frac{2k+2}{k+2}}
h^{\frac{2k+2}{k+2}}\omega_{0,1}^{(j)}(0)
\omega_{0,1}^{(\ell)}(0)\left(\frac{\tau^{k+1}}{k+1}-\alpha
\right)^2  \\
& + h^{\frac{2k+2}{k+2}+\frac{1}{2(k+2)}}\Big[ i
\omega_{\mathrm{min}}(B)^{-\frac{k+1}{k+2}}
\Big[\omega_{0,1}^{(j)}(0) \frac{\partial}{\partial \sigma_\ell}+
\omega_{0,1}^{(\ell)}(0)
\frac{\partial}{\partial \sigma_j}\Big]\left(\frac{\tau^{k+1}}{k+1}-\alpha\right) \\
& + \omega_{\mathrm{min}}(B)^{-\frac{2k+2}{k+2}}\Big(
\omega_{0,1}^{(\ell)}(0) \sum \frac{\partial
\omega_{0,1}^{(j)}}{\partial s_r}(0)\sigma_r\\
& + \omega_{0,1}^{(j)}(0)  \sum \frac{\partial
\omega_{0,1}^{(\ell)}}{\partial s_r}(0)\sigma_r
\Big)\frac{\tau^{k+1}}{k+1}
\left(\frac{\tau^{k+1}}{k+1}-\alpha\right)\Big]\\ & +
h^{\frac{2k+3}{k+2}} \Bigg[ - \frac{\partial^2}{\partial
\sigma_j\partial \sigma_\ell} + i
\omega_{\mathrm{min}}(B)^{-\frac{k+1}{k+2}}  \frac{\partial
\omega_{0,1}^{(\ell)}}{\partial \sigma_j}(0)  \frac{\tau^{k+1} }{k+1} \\
& +\omega_{\mathrm{min}}(B)^{-\frac{2k+3}{k+2}}
\left(\omega_{0,1}^{(j)}(0)\omega_{0,2}^{(\ell)}(0)+\omega_{0,1}^{(\ell)}(0)\omega_{0,2}^{(j)}(0)
\right) \frac{\tau^{k+2}}{k+2} \Big(\frac{\tau^{k+1}}{k+1}-\alpha
\Big)\\
& +  \omega_{\mathrm{min}}(B)^{-\frac{2k+2}{k+2}}\Big[\frac{1}{2}
\Big( \omega_{0,1}^{(\ell)}(0) \sum \frac{\partial^2
\omega_{0,1}^{(j)}}{\partial s_r
\partial s_m}(0)\sigma_r \sigma_m
 \\ & + \omega_{0,1}^{(j)}(0)   \sum
\frac{\partial^2 \omega_{0,1}^{(\ell)}}{\partial s_r \partial
s_m}(0)\sigma_r \sigma_m \Big) \frac{\tau^{k+1}}{k+1}
\left(\frac{\tau^{k+1}}{k+1}-\alpha\right) \\
& + \sum_{r,m} \frac{\partial \omega_{0,1}^{(j)}}{\partial s_r}(0)
\frac{\partial \omega_{0,1}^{(\ell)}}{\partial s_m}(0)\sigma_r
\sigma_m \frac{\tau^{2k+2}}{(k+1)^2}\Big]\Bigg] +
O(h^{\frac{4k+4}{2(k+2)}}).
\end{align*}
Using \eqref{e:domega2} and the expansion
\[
G^{j\ell}(h^{1/2(k+2)}\sigma) \\ =\delta^{j\ell}+
\frac{1}{2}h^{\frac{1}{k+2}}\sum \frac{\partial^2
G^{j\ell}}{\partial s_r \partial s_m}(0)\sigma_r \sigma_m
+O(h^{\frac{3}{2(k+2)}}), \quad h\to 0\,,
\]
we obtain that, after the change of variables (\ref{e:change}), the
fourth term takes the form:
\begin{align*}
\widehat P_4^h =   & \omega_{\mathrm{min}}(B)^{\frac{2}{k+2}}
h^{\frac{2k+2}{k+2}} \left(\frac{\tau^{k+1}}{k+1}-\alpha
\right)^2  \\
& + h^{\frac{2k+2}{k+2}+\frac{1}{2(k+2)}} 2i
\omega_{\mathrm{min}}(B)^{-\frac{k+1}{k+2}}
\left(\frac{\tau^{k+1}}{k+1}-\alpha\right) \sum
\omega_{0,1}^{(j)}(0) \frac{\partial}{\partial \sigma_j}   \\ & +
h^{\frac{2k+3}{k+2}} \Bigg[ - \sum \frac{\partial^2}{\partial
\sigma_j^2} + i \omega_{\mathrm{min}}(B)^{-\frac{k+1}{k+2}} \sum_j
\frac{\partial \omega_{0,1}^{(j)}}{\partial
\sigma_j}(0) \frac{\tau^{k+1} }{k+1} \\
& +2\omega_{\mathrm{min}}(B)^{-\frac{2k+3}{k+2}} \sum
\omega_{0,1}^{(j)}(0)\omega_{0,2}^{(j)}(0) \frac{\tau^{k+2}}{k+2}
\Big(\frac{\tau^{k+1}}{k+1}-\alpha
\Big)\\
& +  \omega_{\mathrm{min}}(B)^{-\frac{2k+2}{k+2}}\Big[\sum_{r,m}
\Big(\sum_j \omega_{0,1}^{(j)}(0) \frac{\partial^2
\omega_{0,1}^{(j)}}{\partial s_r
\partial s_m}(0)\Big)\sigma_r \sigma_m \frac{\tau^{k+1}}{k+1}
\left(\frac{\tau^{k+1}}{k+1}-\alpha\right) \\
& +  \sum_{r,m} \left( \sum_j \frac{\partial
\omega_{0,1}^{(j)}}{\partial s_r}(0) \frac{\partial
\omega_{0,1}^{(j)}}{\partial s_m}(0)\right)\sigma_r \sigma_m
\frac{\tau^{2k+2}}{(k+1)^2}
\\
& + \frac{1}{2}\sum_{rm} \left(\sum_{j\ell} \frac{\partial^2
G^{j\ell}}{\partial s_r \partial s_m}(0) \omega_{0,1}^{(j)}(0)
\omega_{0,1}^{(\ell)}(0)\right) \sigma_r \sigma_m
\left(\frac{\tau^{k+1}}{k+1}-\alpha \right)^2 \Big] \Bigg]\\
& + O(h^{\frac{4k+4}{2(k+2)}}).
\end{align*}
Finally, after the change of variables (\ref{e:change}), the fifth
and sixth terms become:
\begin{multline*}
\widehat P_5^h = \omega_{\mathrm{min}}(B)^{-\frac{2k+3}{k+2}}
h^{\frac{2k+3}{k+2}}\sum_{1\leq j,\ell\leq n-1} \dot{g}^{j\ell}(s)
\omega_{0,1}^{(j)}(0) \omega_{0,1}^{(\ell)}(0) \tau
\left(\frac{\tau^{k+1}}{k+1}-\alpha \right)^2\\ +
O(h^{\frac{4k+4}{2(k+2)}}),
\end{multline*}
\begin{multline*}
 \widehat P_6^h= - \Gamma^0_0(0)
\omega_{\mathrm{min}}(B)^{\frac{1}{k+2}} h^{\frac{2k+3}{k+2}}
\frac{\partial }{\partial \tau} \\ - \sum_{1\leq j\leq n-1}
h^{\frac{2k+3}{k+2}} \Gamma^j_0(0)
\omega_{\mathrm{min}}(B)^{-\frac{k+1}{k+2}} \omega_{0,1}^{(j)}(0) (
\frac{\tau^{k+1}}{k+1} -\alpha)+ O(h^{\frac{4k+4}{2(k+2)}}).
\end{multline*}

Thus, after the change of variables (\ref{e:change}), the operator
$P^h$ has a formal asymptotic expansion
\[
{\widehat P}^{h}=\omega_{\mathrm{min}}(B)^{\frac{2}{k+2}}
h^{\frac{2k+2}{k+2}}\sum_{\ell=0}^{\infty}
h^{\frac{1}{2(k+2)}}\widehat P_\ell,
\]
where
\begin{equation}\label{e:defh10}
\widehat P_0 = -\frac{\partial^2 }{\partial \tau^2}+
\left(\frac{\tau^{k+1}}{k+1} - \alpha \right)^2=Q(\alpha,1),
\end{equation}
\begin{equation}\label{e:defh11}
\widehat P_1 = 2i \omega_{\mathrm{min}}(B)^{-\frac{k+3}{k+2}}
\left(\frac{\tau^{k+1}}{k+1}-\alpha\right) \sum
\omega_{0,1}^{(j)}(0) \frac{\partial}{\partial \sigma_j},
\end{equation}
and
\begin{equation}\label{e:defh12}
\begin{split}
\widehat P_2 = & -\dot{g}^{00}(0)
\omega_{\mathrm{min}}(B)^{-\frac{1}{k+2}}\tau
\frac{{\partial}^2}{\partial
 \tau^2} \\ & +
2 i \sum_{1\leq j\leq n-1} \dot{g}^{0j}(0)
\omega_{\mathrm{min}}(B)^{-\frac{k+3}{k+2}} \omega_{0,1}^{(j)}(0)
\tau \left(\frac{\tau^{k+1}}{k+1}-\alpha\right) \frac{\partial
}{\partial \tau}\\ & + i \sum_{1\leq j\leq n-1} \dot{g}^{0j}(0)
\omega_{0,1}^{(j)}(0)\omega_{\mathrm{min}}(B)^{-\frac{k+3}{k+2}}
\tau^{k+1} \\
& - \omega_{\mathrm{min}}(B)^{-\frac{2}{k+2}} \sum
\frac{\partial^2}{\partial \sigma_j^2} + i
\omega_{\mathrm{min}}(B)^{-\frac{k+3}{k+2}} \sum_j \frac{\partial
\omega_{0,1}^{(j)}}{\partial
\sigma_j}(0) \frac{\tau^{k+1} }{k+1} \\
& +2\omega_{\mathrm{min}}(B)^{-\frac{2k+5}{k+2}} \sum
\omega_{0,1}^{(j)}(0)\omega_{0,2}^{(j)}(0) \frac{\tau^{k+2}}{k+2}
\Big(\frac{\tau^{k+1}}{k+1}-\alpha
\Big)\\
& +  \omega_{\mathrm{min}}(B)^{-2}\Big[\sum_{r,m} \Big(\sum_j
\omega_{0,1}^{(j)}(0) \frac{\partial^2 \omega_{0,1}^{(j)}}{\partial
s_r
\partial s_m}(0)\Big)\sigma_r \sigma_m \frac{\tau^{k+1}}{k+1}
\left(\frac{\tau^{k+1}}{k+1}-\alpha\right) \\
& + \sum_{r,m} \left( \sum_j \frac{\partial
\omega_{0,1}^{(j)}}{\partial s_r}(0) \frac{\partial
\omega_{0,1}^{(j)}}{\partial s_m}(0)\right)\sigma_r \sigma_m
\frac{\tau^{2k+2}}{(k+1)^2}\\ & + \frac{1}{2}\sum_{r,m}
\left(\sum_{j,\ell} \frac{\partial^2 G^{j\ell}}{\partial s_r
\partial s_m}(0) \omega_{0,1}^{(j)}(0)
\omega_{0,1}^{(\ell)}(0)\right) \sigma_r \sigma_m
\left(\frac{\tau^{k+1}}{k+1}-\alpha \right)^2\Big]\\
& + \omega_{\mathrm{min}}(B)^{-\frac{2k+5}{k+2}} \sum_{1\leq
j,\ell\leq n-1} \dot{g}^{j\ell}(0) \omega_{0,1}^{(j)}(0)
\omega_{0,1}^{(\ell)}(0) \tau
\left(\frac{\tau^{k+1}}{k+1}-\alpha \right)^2\\
& - \Gamma^0_0(0) \omega_{\mathrm{min}}(B)^{-\frac{1}{k+2}}
\frac{\partial }{\partial \tau} \\ & -
\omega_{\mathrm{min}}(B)^{-\frac{k+3}{k+2}} \sum_{1\leq j\leq
n-1}\Gamma^j_0(0)\omega_{0,1}^{(j)}(0) \left( \frac{\tau^{k+1}}{k+1}
-\alpha\right).
\end{split}
\end{equation}

\subsection{Reduction to the zero set}
Now we use the method initiated by Grushin \cite{Grushin} (and
references therein) and Sj\"ostrand \cite{Sj73} in the context of
hypoellipticity. We will closely follow the exposition in
\cite{Fournais-Helffer06} (see also \cite{syrievienne}). We now choose
 some $\alpha_{min}$ and will use the previous construction at
\[
\alpha=\alpha_{\rm min}\,.
\]
 The starting point is to consider the
operator in $\cS(\RR^n)\times \cS(\RR^{n-1})$ defined by
\begin{equation*}
    \cP_0=\begin{pmatrix}
            P_0 & R^-_0 \\
            R^+_0 & 0 \\
          \end{pmatrix},
\end{equation*}
where the operator $P_0 : \cS(\RR^n) \to \cS(\RR^{n})$ is given by
\[
P_0=-\frac{\partial^2 }{\partial \tau^2}+
\left(\frac{\tau^{k+1}}{k+1} - \alpha_{\rm min} \right)^2
-\hat{\nu}=Q(\alpha_{\rm min},1)-\hat{\nu},
\]
the operator $R^-_0 : \cS(\RR^{n-1}) \to \cS(\RR^{n})$ is given by
\[
R^-_0\phi(\tau,\sigma)= \phi(\sigma)
u^0_{\alpha_{\mathrm{min}}}(\tau), \quad \phi\in \cS(\RR^{n-1}),
\]
and the operator $R^+_0 : \cS(\RR^{n}) \to \cS(\RR^{n-1})$ is given
by
\[
R^+_0f(\sigma)= \int f(\tau,\sigma)
u^0_{\alpha_{\mathrm{min}}}(\tau) d\tau, \quad f\in \cS(\RR^{n}).
\]
We observe that $\cP_0$ considered as an operator in
$L^2(\RR^n,d\tau\,d\sigma)\times L^2(\RR^{n-1},d\sigma)$ is
formally self-adjoint. In particular $R^-_0$ is the Hilbertian
adjoint of $R^+_0$ (considered as an operator from
$L^2(\RR^{n},d\tau\,d\sigma)$ to $L^2(\RR^{n-1},d\sigma)$).

We also verify that
\[
R^+_0R^-_0= I_{L^2(\RR^{n-1})}, \quad R^-_0R^+_0=\Pi_0,
\]
where $\Pi_0 : L^2(\RR^{n}) \to L^2(\RR^{n})$ is the orthogonal
projection on the subspace $\{\RR
u^0_{\alpha_{\mathrm{min}}}\}\times L^2(\RR^{n-1})$ in
$L^2(\RR^{n})$:
\[
\Pi_0f(\tau,\sigma)= \left(\int f(\tau,\sigma)
u^0_{\alpha_{\mathrm{min}}}(\tau) d\tau \right)
u^0_{\alpha_{\mathrm{min}}}(\tau), \quad f\in L^2(\RR^{n}).
\]

Define the operator $E_0$ in $\cS(\RR^{n})$ by
\begin{equation}\label{e:defE0}
E_0= (I-\Pi_0)P_0^{-1} (I-\Pi_0),
\end{equation}
where, by abuse of notation, we consider $P_0$ as an operator in
$L^2(\RR^{n})$. As shown in \cite[Lemma A.5]{Fournais-Helffer06},
$E_0$ respects the Schwartz space $\cS(\RR^n)$. Then we have
\[
\cP_0\circ \cE_0=I\,,
\]
where the operator $\cE_0$ in $\cS(\RR^n)\times \cS(\RR^{n-1})$ is
given by the matrix
\begin{equation*}
    \cE_0=\begin{pmatrix}
            E_0 & R^-_0 \\
            R^+_0 & 0 \\
          \end{pmatrix}.
\end{equation*}

The idea is to consider the more general operator $\cP(z)$ in
$\cS(\RR^n)\times \cS(\RR^{n-1})$ defined by
\begin{equation*}
    \cP(z)=\begin{pmatrix}
            \omega_{\mathrm{min}}(B)^{-\frac{2}{k+2}}
h^{-\frac{2k+2}{k+2}}({\widehat P}^{h}- z) &\quad\quad R^-_0 \\
            R^+_0 & \quad\quad 0 \\
          \end{pmatrix}.
\end{equation*}
Note that $\cP(z)$ is for $z\in \RR$ formally self-adjoint for the
original $L^2$-scalar product in $L^2(\RR^n)$ but not for the usual
$L^2$ associated to the standard Lebesgue measure $d\tau\,d\sigma)$.

We are looking for the right inverse of $\cP(z)$ for any small $z\in
\CC$. One can write
\begin{equation*}
    \cP(z)=\begin{pmatrix}
P_0+ \delta P -Z &\quad\quad  R^-_0 \\
            R^+_0 &\quad\quad  0 \\
          \end{pmatrix}\,,
\end{equation*}
where
\begin{align}
\delta P& =\omega_{\mathrm{min}}(B)^{-\frac{2}{k+2}}
h^{-\frac{2k+2}{k+2}}{\widehat P}^{h}-P_0\\
& = -\hat{\nu}+ h^{\frac{1}{2(k+2)}}\widehat P_1
+h^{\frac{1}{k+2}}\widehat P_2+h^{\frac{3}{2(k+2)}}Q(h), \nonumber\\
Z& =\omega_{\mathrm{min}}(B)^{-\frac{2}{k+2}}
h^{-\frac{2k+2}{k+2}}z-\hat{\nu}\,,\label{defZ}
\end{align}
and $Q(h)$ admits a complete expansion
\[
Q(h)\sim \sum_{j=0}^\infty h^{\frac{j}{2(k+2)}} \widehat P_{j+3}\,.
\]

We first observe that
\[
\cP(z)\cE_0 = I+\cK,
\]
where
\[
\cK=\begin{pmatrix}
            (\delta P -Z) E_0 &\quad  (\delta P -Z) R_0^+ \\
            0 & \quad 0 \\
          \end{pmatrix}.
\]

We will assume that $Z$ is a function of $h$, $Z=Z(h)$, which admits
a formal asymptotic expansion of the form
\begin{equation}\label{e:defZ}
Z(h)\sim \sum_{\ell\geq 1} Z_\ell h^{\frac{\ell}{2(k+2)}}.
\end{equation}
Then we have
\[
\delta P -Z\sim \sum_{\ell\geq 1} (\widehat P_\ell-Z_\ell)
h^{\frac{\ell}{2(k+2)}}\,.
\]
If we define
\[
Q\sim \sum_{j=0}^{+\infty}(-1)^j\cK^j\,,
\]
then the operator is well-defined (after reordering) as a formal
expansion in powers of $h^{\frac{1}{2(k+2)}}$ and
\[
\cP(z)\cE_0 Q\sim I\,.
\]
So $\cE(z)=\cE_0 Q$ is the right inverse of $\cP(z)$. If we write
\begin{equation*}
    \cE(z)=\begin{pmatrix}
            E(z) & E^+(z) \\
            E^-(z) & E^{\pm}(z) \\
          \end{pmatrix},
\end{equation*}
we get, in the sense of formal expansions in powers of
$h^{\frac{1}{2(k+2)}}$,
\begin{align}
\omega_{\mathrm{min}}(B)^{-\frac{2}{k+2}}
h^{-\frac{2k+2}{k+2}}({\widehat P}^{h}- z) E(z) + R^-_0E^-(z)&\sim I, \\
\omega_{\mathrm{min}}(B)^{-\frac{2}{k+2}}
h^{-\frac{2k+2}{k+2}}({\widehat P}^{h}- z) E^+(z) + R^-_0E^{\pm}(z)&\sim 0. \label{e:Re1}\\
R^+_0 E(z) & \sim 0. \\
R^+_0 E^+(z) & \sim I.
\end{align}

Let us introduce a function $\widehat E^{\pm}(Z)$ by
$$
\widehat E^{\pm} (Z) = E^{\pm} (z)\,,
$$
with $Z$ related to $z$ by \eqref{defZ}. We have
\[
\cK^j=\begin{pmatrix}
            [(\delta P -Z) E_0]^j & [(\delta P -Z) E_0]^{j-1} [(\delta P -Z) R_0^-] \\
            0 & 0 \\
          \end{pmatrix},
\]
and therefore, $\widehat E^{\pm}(Z)$ is the following asymptotic
series,
\begin{equation}\label{e:Epm}
\widehat E^{\pm}(Z)\sim \sum_{j=1}^{+\infty} (-1)^j R_0^+[(\delta P
-Z) E_0]^{j-1} [(\delta P -Z) R_0^-].
\end{equation}

\begin{lemma}\label{l:expEpm}
The function $\widehat E^\pm(Z(h))$ admits the following formal
asymptotic expansion in powers of $h^{\frac{1}{2(k+2)}}$:
\[
\widehat E^\pm(Z(h))=\sum_{j=1}^\infty
E^{\pm}_{\frac{j}{2(k+2)}}h^{\frac{j}{2(k+2)}},
\]
with
\begin{align}
E^{\pm}_{\frac{1}{2(k+2)}}& =Z_1, \label{z1} \\
E^{\pm}_{\frac{1}{k+2}}&
=Z_2-\omega_{\mathrm{min}}(B)^{-\frac{2}{k+2}} K\,. \label{z2}
\end{align}
\end{lemma}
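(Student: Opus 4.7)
The plan is to substitute the expansions of $\delta P$ and $Z(h)$ into the formal Neumann series \eqref{e:Epm} and collect powers of $h^{\frac{1}{2(k+2)}}$. The algebraic facts that make this tractable are $R_0^+R_0^-=I$, together with $R_0^+E_0=0$ and $E_0R_0^-=0$ (both immediate from $E_0=(I-\Pi_0)P_0^{-1}(I-\Pi_0)$ and $\Pi_0=R_0^-R_0^+$), and $P_0R_0^-=0$ (since $Q(\alpha_{\mathrm{min}},1)u^0_{\alpha_{\mathrm{min}}}=\hat{\nu}\,u^0_{\alpha_{\mathrm{min}}}$). The vanishing of $E_0$ against $R_0^\pm$ collapses almost every term with an interior $E_0$, keeping the computation at each order finite.

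At order $h^{\frac{1}{2(k+2)}}$, only the $j=1$ term of \eqref{e:Epm} contributes, giving $E^\pm_{\frac{1}{2(k+2)}}=-R_0^+\widehat{P}_1R_0^-+Z_1$. Inserting the explicit form \eqref{e:defh11} of $\widehat{P}_1$ and acting on $R_0^-\phi=\phi(\sigma)u^0_{\alpha_{\mathrm{min}}}(\tau)$, one obtains a first-order differential operator in $\sigma$ applied to $\phi$ with scalar coefficient proportional to $\int\bigl(\frac{\tau^{k+1}}{k+1}-\alpha_{\mathrm{min}}\bigr)(u^0_{\alpha_{\mathrm{min}}})^2\,d\tau$. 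This integral vanishes by the criticality relation \eqref{e:lambda01}, so $R_0^+\widehat{P}_1R_0^-=0$ and \eqref{z1} follows.

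At order $h^{\frac{1}{k+2}}$, the $j=1$ term produces $Z_2-R_0^+\widehat{P}_2R_0^-$, while the $j=2$ term contributes $R_0^+(\widehat{P}_1-Z_1)E_0(\widehat{P}_1-Z_1)R_0^-$. In the latter, the three cross-terms involving $Z_1$ all vanish by $R_0^+E_0=0$ and $E_0R_0^-=0$, leaving only $R_0^+\widehat{P}_1E_0\widehat{P}_1R_0^-$. To compute this, I would rewrite $\widehat{P}_1=2i\,\omega_{\mathrm{min}}(B)^{-\frac{1}{k+2}}\bigl(\frac{\tau^{k+1}}{k+1}-\alpha_{\mathrm{min}}\bigr)\widehat{e}_\omega$ using the definition of $\widehat{e}_\omega$, apply \eqref{e:Qlambda} together with the orthogonality of $\partial_\alpha u^0_\alpha$ and $u^0_{\alpha_{\mathrm{min}}}$ (from differentiating $\|u^0_\alpha\|=1$) to obtain $E_0\bigl[\bigl(\frac{\tau^{k+1}}{k+1}-\alpha_{\mathrm{min}}\bigr)u^0_{\alpha_{\mathrm{min}}}\bigr]=\frac12\partial_\alpha u^0_\alpha$ evaluated at $\alpha_{\mathrm{min}}$, and then identify the remaining $\tau$-integral via \eqref{e:lambda02} in terms of $\frac{\partial^2\lambda_0}{\partial\alpha^2}(\alpha_{\mathrm{min}},1)$. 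Combined with the Laplacian piece $-\omega_{\mathrm{min}}(B)^{-\frac{2}{k+2}}\sum\frac{\partial^2}{\partial\sigma_j^2}=\omega_{\mathrm{min}}(B)^{-\frac{2}{k+2}}(\Delta_\omega+\Delta_{\omega^\perp})$ sitting inside $\widehat{P}_2$, one recovers exactly the first two terms $\frac12\frac{\partial^2\lambda_0}{\partial\alpha^2}(\alpha_{\mathrm{min}},1)\Delta_\omega+\Delta_{\omega^\perp}$ of $\omega_{\mathrm{min}}(B)^{-\frac{2}{k+2}}K$.

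The main obstacle is the bookkeeping for the remaining summands of $R_0^+\widehat{P}_2R_0^-$: each of the many terms in \eqref{e:defh12} must be evaluated against $(u^0_{\alpha_{\mathrm{min}}})^2$, with integration by parts in $\tau$ for the $\dot{g}^{00}\tau\,\partial_\tau^2$ and $\Gamma^0_0\,\partial_\tau$ pieces, and with \eqref{e:lambda01} used to discard odd contributions in $\bigl(\frac{\tau^{k+1}}{k+1}-\alpha_{\mathrm{min}}\bigr)$. The quadratic-in-$\sigma$ pieces coming from the second-order Taylor expansions of $|\omega_{0,1}(s)|^2$ and of $G^{j\ell}(s)$, together with \eqref{e:norm} for the dominant coefficient $\hat{\nu}/(k+2)$, assemble into $\Omega_{rm}\sigma_r\sigma_m$; the $\sigma$-independent summands produce the constant $A$. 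Summing all contributions yields $E^\pm_{\frac{1}{k+2}}=Z_2-\omega_{\mathrm{min}}(B)^{-\frac{2}{k+2}}K$, which is \eqref{z2}.
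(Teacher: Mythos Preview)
Your proposal is correct and follows essentially the same route as the paper: expand \eqref{e:Epm} in powers of $h^{\frac{1}{2(k+2)}}$, kill $R_0^+\widehat P_1R_0^-$ via \eqref{e:lambda01}, compute $R_0^+\widehat P_1E_0\widehat P_1R_0^-$ using \eqref{e:Qlambda} and \eqref{e:lambda02}, and reduce the many pieces of $R_0^+\widehat P_2R_0^-$ with \eqref{e:lambda01}, \eqref{e:norm}, and the second-order Taylor identity for $|\omega_{0,1}|^2$ to recover $\Omega_{rm}$ and $A$. The only additions worth noting are that the paper makes explicit the cancellation between the $\dot g^{0j}$ terms (your ``integration by parts'' step) and records the intermediate formulae for $b_{rm}$ and $a$, but the logic is identical.
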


\begin{proof}
It follows from \eqref{e:Epm} that the coefficient of
$h^{\frac{1}{2(k+2)}}$ is given by
\[
E^{\pm}_{\frac{1}{2(k+2)}} = Z_1-R_0^+\widehat P_1R_0^-.
\]
Using the definitions of the operators $R_0^+$, $\widehat P_1$ and
$R_0^-$ and (\ref{e:lambda01}), we get
\begin{align*}
E^{\pm}_{\frac{1}{2(k+2)}} = & Z_1-
2i\omega_{\mathrm{min}}(B)^{-\frac{k+3}{k+2}} \left[\int
\left(\frac{\tau^{k+1}}{k+1}-\alpha_{\rm
min}\right)|u^0_{\alpha_{\mathrm{min}}}(\tau)|^2 \,d\tau \right]
\sum_{j}
\omega_{0,1}^{(j)}(0)\frac{\partial}{\partial \sigma_j} \\
 = & Z_1.
\end{align*}
By \eqref{e:Epm}, the coefficient of $h^{\frac{1}{k+2}}$ is given by
the operator
\begin{equation}\label{def1/k+2}
E^{\pm}_{\frac{1}{k+2}} = Z_2-R_0^+\widehat P_2R_0^- + R_0^+
\widehat P_1 E_0 \widehat P_1 R_0^-.
\end{equation}
By \eqref{e:defh12}, it follows that
\[
R_0^+\widehat P_2R_0^-= \omega_{\mathrm{min}}(B)^{-\frac{2}{k+2}}
\sum_{j} \frac{\partial^2}{\partial \sigma_j^2} + \sum_{r,m} b_{rm}
\sigma_r \sigma_m +a,
\]
where $b_{rm}$ and $a$ are given by
\begin{multline*}
b_{rm}=- \omega_{\mathrm{min}}(B)^{-2}\Big[ R_0^+
\frac{\tau^{k+1}}{k+1} \left(\frac{\tau^{k+1}}{k+1}-\alpha_{\rm
min}\right)R_0^-\Big(\sum_j \omega_{0,1}^{(j)}(0) \frac{\partial^2
\omega_{0,1}^{(j)}}{\partial s_r
\partial s_m}(0)\Big)  \\
+ R_0^+ \frac{\tau^{2k+2}}{(k+1)^2}R_0^-  \left( \sum_j
\frac{\partial \omega_{0,1}^{(j)}}{\partial s_r}(0) \frac{\partial
\omega_{0,1}^{(j)}}{\partial s_m}(0)\right) \\
+ \frac{1}{2}R_0^+ \left(\frac{\tau^{k+1}}{k+1}-\alpha_{\rm min}
\right)^2 R_0^- \left(\sum_{j,\ell} \frac{\partial^2
G^{j\ell}}{\partial s_r \partial s_m}(0) \omega_{0,1}^{(j)}(0)
\omega_{0,1}^{(\ell)}(0)\right) \Big],
\end{multline*}
and
\begin{equation}\label{e:C}
\begin{split}
a = & -\dot{g}^{00}(0) \omega_{\mathrm{min}}(B)^{-\frac{1}{k+2}}\int
\tau \frac{{\partial}^2 u^0_\alpha}{\partial
 \tau^2}(\tau) u^0_{\alpha_{\mathrm{min}}}(\tau) d\tau \\ & +
2 i \sum_{1\leq j\leq n-1} \dot{g}^{0j}(0)
\omega_{\mathrm{min}}(B)^{-\frac{k+3}{k+2}} \omega_{0,1}^{(j)}(0)
\int \tau \left(\frac{\tau^{k+1}}{k+1}-\alpha_{\rm min}\right)
\frac{\partial u^0_\alpha}{\partial \tau}(\tau)
u^0_{\alpha_{\mathrm{min}}}(\tau) d\tau\\ & + i \sum_{1\leq j\leq
n-1} \dot{g}^{0j}(0)
\omega_{0,1}^{(j)}(0)\omega_{\mathrm{min}}(B)^{-\frac{k+3}{k+2}}
\int \tau^{k+1} (u^0_{\alpha_{\mathrm{min}}}(\tau))^2 d\tau \\
& + i \omega_{\mathrm{min}}(B)^{-\frac{k+3}{k+2}} \sum_j
\frac{\partial \omega_{0,1}^{(j)}}{\partial
\sigma_j}(0) \int \frac{\tau^{k+1} }{k+1} (u^0_{\alpha_{\mathrm{min}}}(\tau))^2 d\tau \\
& +2\omega_{\mathrm{min}}(B)^{-\frac{2k+5}{k+2}} \sum
\omega_{0,1}^{(j)}(0)\omega_{0,2}^{(j)}(0)\int
\frac{\tau^{k+2}}{k+2} \Big(\frac{\tau^{k+1}}{k+1}-\alpha_{\rm min}
\Big)(u^0_{\alpha_{\mathrm{min}}}(\tau))^2 d\tau\\
& + \omega_{\mathrm{min}}(B)^{-\frac{2k+5}{k+2}} \sum_{1\leq
j,\ell\leq n-1} \dot{g}^{j\ell}(s) \omega_{0,1}^{(j)}(0)
\omega_{0,1}^{(\ell)}(0)\int \tau
\left(\frac{\tau^{k+1}}{k+1}-\alpha_{\rm min} \right)^2(u^0_{\alpha_{\mathrm{min}}}(\tau))^2 d\tau\\
& - \Gamma^0_0(0) \omega_{\mathrm{min}}(B)^{-\frac{1}{k+2}} \int
\frac{\partial u^0_\alpha}{\partial \tau}(\tau)
u^0_{\alpha_{\mathrm{min}}}(\tau) d\tau \\ &-
\omega_{\mathrm{min}}(B)^{-\frac{k+3}{k+2}} \sum_{1\leq j\leq
n-1}\Gamma^j_0(0)\omega_{0,1}^{(j)}(0)\int \left(
\frac{\tau^{k+1}}{k+1} -\alpha_{\rm
min}\right)(u^0_{\alpha_{\mathrm{min}}}(\tau))^2 d\tau.
\end{split}
\end{equation}
By (\ref{e:lambda01}) and (\ref{e:norm}), it follows that
\begin{multline*}
R_0^+ \left(\frac{\tau^{k+1}}{k+1} -\alpha_{\rm min}\right)
\frac{\tau^{k+1}}{k+1}R_0^- \\ = \int \left(\frac{\tau^{k+1}}{k+1}
-\alpha_{\rm min}\right) \frac{\tau^{k+1}}{k+1}
(u^0_{\alpha_{\mathrm{min}}}(\tau))^2\,d\tau= \frac{\hat\nu}{k+2},
\end{multline*}
\[
R_0^+ \frac{\tau^{2k+2}}{(k+1)^2}R_0^- = \int
\frac{\tau^{2k+2}}{(k+1)^2}
u^0_{\alpha_{\mathrm{min}}}(\tau)^2\,d\tau = \frac{\hat\nu}{k+2}+
\alpha_{\rm min}^2.
\]
and
\[
R_0^+ \left(\frac{\tau^{k+1}}{k+1}-\alpha_{\rm min} \right)^2
R_0^-=\| \left(\frac{\tau^{k+1}}{k+1} -\alpha_{\rm min}\right)
u^0_{\alpha_{\mathrm{min}}}(\tau)\|^2=\frac{\hat\nu}{k+2}.
\]

Using the fact that $G_{j\ell}(0)=\delta_{j\ell}$ and ${\partial
G_{j\ell}}/{\partial s_r}(0)=0$ for any $j$, $\ell$ and $r$, the
formula~\eqref{e:omega2} implies that, for any $r$ and $m$
\begin{align*}
\frac{\partial^2 |\omega_{0,1}|^2}{\partial s_r \partial s_m}(0) = &
\sum_{j,\ell} \frac{\partial^2 G^{j\ell}}{\partial s_r
\partial s_m} (0) \omega_{0,1}^{(j)}(0)
\omega_{0,1}^{(\ell)}(0)\\
& + 2 \sum_{j}\frac{\partial^2 \omega_{0,1}^{(j)}}{\partial s_r
\partial s_m} (0)
\omega_{0,1}^{(j)}(0) + 2 \sum_{j} \frac{\partial
\omega_{0,1}^{(j)}}{\partial s_r}(0) \frac{\partial
\omega_{0,1}^{(j)}}{\partial s_m}(0).
\end{align*}

From the above formulae, it follows that
\[
b_{rm}=- \omega_{\mathrm{min}}(B)^{-\frac{2}{k+2}} \Omega_{rm}.
\]

Using integration by parts and (\ref{e:lambda01}), we get
\begin{multline*}
2\int \tau \left(\frac{\tau^{k+1}}{k+1}-\alpha_{\rm min}\right)
\frac{\partial u^0_\alpha}{\partial \tau}(\tau)
u^0_{\alpha_{\mathrm{min}}}(\tau) d\tau \\ + \int \tau
\left(\frac{\tau^{k+1}}{k+1}-\alpha_{\rm min}\right) \frac{\partial
u^0_\alpha}{\partial \tau}(\tau) u^0_{\alpha_{\mathrm{min}}}(\tau)
d\tau=0,
\end{multline*}
that implies that the sum of the second and the third terms in
\eqref{e:C} equals zero. It is also easy to see that the last two
terms in \eqref{e:C} equal zero. Thus, we have
\begin{equation}\label{e:C1}
a =  - \omega_{\mathrm{min}}(B)^{-\frac{2}{k+2}} A.
\end{equation}

We conclude that
\begin{multline}\label{e:RHR}
R_0^+\widehat P_2R_0^- \\ =
-\omega_{\mathrm{min}}(B)^{-\frac{2}{k+2}}
\Delta-\omega_{\mathrm{min}}(B)^{-\frac{2}{k+2}}  \sum_{r,m}
\Omega_{rm} \sigma_r \sigma_m -
\omega_{\mathrm{min}}(B)^{-\frac{2}{k+2}} A.
\end{multline}
Next, by (\ref{e:defE0}) and (\ref{e:defh11}), we have
\begin{multline*}
E_0 \widehat P_1 R_0^-  \\
= 2i \omega_{\mathrm{min}}(B)^{-\frac{1}{k+2}} (I-\Pi_0)
(Q(\alpha_{\rm min},1)-\hat{\nu})^{-1} (I-\Pi_0)
\left(\frac{\tau^{k+1}}{k+1}-\alpha_{min}\right)u^0_{\alpha_{\mathrm{min}}}(\tau)
{\widehat e}_\omega.
\end{multline*}
By (\ref{e:lambda01}), it follows that
\[
\Pi_0 \left[\left( \frac{\tau^{k+1}}{k+1}-\alpha_{\rm min}
\right)u^0_{\alpha_{\mathrm{min}}}(\tau)\right]=\int \left(
\frac{\tau^{k+1}}{k+1}-\alpha_{\rm min}
\right)(u^0_{\alpha_{\mathrm{min}}}(\tau))^2d\tau = 0.
\]
Therefore, by (\ref{e:Qlambda}), we get
\begin{align*}
(I-\Pi_0)\left(\frac{\tau^{k+1}}{k+1}-\alpha_{\rm min}
\right)u^0_{\alpha_{\mathrm{min}}}(\tau)=&
\left(\frac{\tau^{k+1}}{k+1}-\alpha_{\rm min} \right)
%\frac12\frac{\partial \lambda_0}{\partial\alpha}
%(\alpha,1)
u^0_{\alpha_{\mathrm{min}}}(\tau)\\
= & \frac12 \left(Q(\alpha_{\rm min},1)
-\hat{\nu}\right)\frac{\partial u^0_\alpha}{\partial\alpha}.
\end{align*}
We obtain (note that $\Pi_0\frac{\partial
u^0_\alpha}{\partial\alpha}=0$)
\[
E_0 \widehat P_1 R_0^- = i\omega_{\mathrm{min}}(B)^{-\frac{1}{k+2}}
\frac{\partial u^0_\alpha}{\partial\alpha} {\widehat e}_\omega.
\]
Next, we have
\[
R_0^+ \widehat P_1 E_0 \widehat P_1 R_0^-=
2\omega_{\mathrm{min}}(B)^{-\frac{2}{k+2}} R_0^+
\left(\frac{\tau^{k+1}}{k+1}-\alpha_{\rm min}\right)\frac{\partial
u^0_\alpha}{\partial\alpha} \Delta_\omega.
\]
By (\ref{e:lambda02}), it follows that
\begin{multline*}
R_0^+ \left(\frac{\tau^{k+1}}{k+1}-\alpha_{\rm min}
\right)\frac{\partial u^0_\alpha}{\partial\alpha}\\ = \int
\left(\frac{\tau^{k+1}}{k+1}-\alpha_{\rm min}\right)
u^0_{\alpha_{\mathrm{min}}}(\tau) \frac{\partial
u^0_\alpha}{\partial\alpha}(\tau) \,d\tau =  \frac 14 \left( 2-
\frac{\partial^2 \lambda_0}{\partial\alpha^2}(\alpha_{\rm
min},1)\right).
\end{multline*}
Therefore, we obtain
\begin{equation}\label{e:RHEHR}
R_0^+ \widehat P_1 E_0 \widehat P_1 R_0^-= \frac
12\omega_{\mathrm{min}}(B)^{-\frac{2}{k+2}}\left( 2-
\frac{\partial^2 \lambda_0}{\partial\alpha^2}(\alpha_{\rm
min},1)\right) \Delta_\omega.
\end{equation}

From \eqref{def1/k+2}, \eqref{e:RHR} and~\eqref{e:RHEHR}, we get
\eqref{z2}, that completes the proof of the lemma.
\end{proof}

\subsection{Construction of approximate eigenfunctions}
In this section, we complete the proof of Theorem~\ref{YK:l1}. So
suppose that $\lambda$ is in the spectrum of the operator $K$. Our
considerations depend on whether Conjecture~\ref{c:main} is true or
false. Since this is unknown at the moment, we consider both
possible cases.

First, suppose that Conjecture~\ref{c:main} is true, that is, the
second derivative $\frac{\partial^2 \lambda_0}{\partial\alpha^2}
(\alpha_{\mathrm{min}},1)$ is positive. Then the operator $K$ has
discrete spectrum. Let $\phi_0\in \cS(\RR^{n-1})$ be an
eigenfunction of $K$ with the corresponding eigenvalue $\lambda$.
Put in \eqref{e:defZ}
\[
Z_1=0, \quad Z_2=\omega_{\mathrm{min}}(B)^{-\frac{2}{k+2}}\lambda,
\quad Z_\ell=0\ (\forall \ell\geq 3).
\]
So we have
\[
Z(h)=\omega_{\mathrm{min}}(B)^{-\frac{2}{k+2}}\lambda
h^{\frac{1}{k+2}}, \quad
z(h)=\hat{\nu}\omega_{\mathrm{min}}(B)^{\frac{2}{k+2}}
h^{\frac{2k+2}{k+2}}+ \lambda h^{\frac{1}{k+2}}h^{\frac{2k+3}{k+2}}.
\]
By Lemma~\ref{l:expEpm}, we have
\[
{\widehat E}^{\pm}(Z(h)) \phi_0 = O(h^{\frac{3}{2(k+2)}})\;,
\]
and, by \eqref{e:Re1}, we obtain
\[
({\widehat P}^{h}- z(h)) \widehat E^{+}(Z(h))\phi_0
=O(h^{\frac{4k+7}{2(k+2)}}),
\]
where the function $\widehat E^{+}(Z)$ is given by
$$
\widehat E^{+} (Z) = E^{+}(z)\,,
$$
with $Z$ related to $z$ by \eqref{defZ}. From \eqref{e:HhPh}, it
follows that the function
\begin{multline*}
U^h(t,s)=\chi(t,s)h^{-\frac{n+1}{4(k+2)}}
\exp\left(-i\frac{\phi(s)}{h} \right)\exp\left(
i\frac{\alpha_{\mathrm{min}}
\sum\limits_{j=1}^{n-1}\omega_{0,1}^{(j)}(0)s_j}{\omega_{\mathrm{min}}(B)^{\frac{k+1}{k+2}}
h^{\frac{1}{k+2}}}\right)  \\
\times \widehat E^{+}(Z(h))
\phi_0(\omega_{\mathrm{min}}(B)^{\frac{1}{k+2}}
h^{-1/(k+2)}t,h^{-1/2(k+2)}s), \quad s\in B(0,r),\quad t\in \RR,
\end{multline*}
where $\chi\in C^\infty_c(U)$ is a cut-off function, satisfies
$\|U_h\|=1+o(1)$ and
\[
(H^{h}_0- z(h)) U^h =O(h^{\frac{4k+7}{2(k+2)}}).
\]
From the above, we see that $\widehat E^{+}(Z)$ is the following
asymptotic series,
\[
\widehat E^{+}(Z)\sim R_0^-+\sum_{j=1}^{+\infty} (-1)^j E_0[(\delta
P -Z) E_0]^{j-1} [(\delta P -Z) R_0^-],
\]
and, therefore, it admits an asymptotic expansion in powers of
$h^{\frac{1}{2(k+2)}}$:
\[
\widehat E^+(Z(h))\sim \sum_{\ell=1}^\infty
E^{\pm}_{\frac{\ell}{2(k+2)}}h^{\frac{\ell}{2(k+2)}}.
\]
Using this fact, one can easily see that the function $U^h$
satisfies the conditions~\eqref{e:uh}, \eqref{e:u1h} and
\eqref{e:u2h}. By Lemma~\ref{l:model}, it follows that
\[
(H^{h}- z(h)) U^h =O(h^{\frac{4k+7}{2(k+2)}}),
\]
that completes the proof in this case.

Now consider the case when Conjecture~\ref{c:main} is false. So
suppose that the equality $\frac{\partial^2
\lambda_0}{\partial\alpha^2} (\alpha_{\mathrm{min}},1)=0$ is true.
Thus, the operator $K$ has the form
\[
K=\Delta_{\omega^\bot} + \sum_{r,m} \Omega_{rm} \sigma_r \sigma_m
+A.
\]

\begin{lemma}\label{l:quasimodesK}
There exists $w^h_0\in {\mathcal S}(\RR^{n-1})$, $\|w^h_0\|=1$,
which satisfies the following conditions: there exists $C>0$ such
that, for any $h>0$, we have
\[
\|(K-\lambda)w^h_0\| \leq Ch^{\frac{1}{2(k+2)}},
\]
and for any multi-index $\alpha=(\alpha_1,\ldots,\alpha_{n-1})$,
there exists a constant $C_\alpha>0$ such that, for any $h>0$, we
have
\begin{equation}\label{e:wh0}
\left\|\partial^\alpha_\sigma w^h_0\right\| \leq C_\alpha
h^{-\frac{|\alpha|}{2(k+2)}}.
\end{equation}
\end{lemma}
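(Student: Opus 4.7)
In the degenerate case $\frac{\partial^2\lambda_0}{\partial\alpha^2}(\alpha_{\mathrm{min}},1)=0$, the operator $K$ has lost its second derivative in the $e_\omega$-direction and is no longer elliptic, so its spectrum is not discrete but fills an interval $[\nu_0,\infty)$. The plan is accordingly to build $w^h_0$ as a concentrating Weyl sequence at $\lambda$: a genuine eigenfunction of a transverse harmonic oscillator, multiplied by a bump in the $e_\omega$-direction that shrinks on the scale $h^{1/(2(k+2))}$.

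Write $\sigma=(x,y)$ with $x\in\RR$ the coordinate along $e_\omega$ and $y\in\RR^{n-2}$ the remaining coordinates along $e_2,\ldots,e_{n-1}$. Then
\[
\sum_{r,m}\Omega_{rm}\sigma_r\sigma_m=\kappa_0\,x^2+2x\langle b,y\rangle+\langle C y,y\rangle
\]
for some $\kappa_0\in\RR$, $b\in\RR^{n-2}$, and symmetric $C$. The non-degeneracy hypothesis on the minimum of $|\omega_{0,1}|^2$ at $x_1$ makes the Hessian contribution to $\Omega$ positive definite, while the remaining Gram-type piece is positive semidefinite, so $\Omega$ itself is positive definite; in particular $C$ is positive definite and the Schur complement $\kappa:=\kappa_0-\langle C^{-1}b,b\rangle$ is strictly positive. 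The unit-Jacobian shear $\tilde y:=y+xC^{-1}b$ leaves $\partial_y=\partial_{\tilde y}$ untouched, and in the $(x,\tilde y)$-coordinates $K$ separates as
\[
K=L_0+\kappa x^2,\qquad L_0:=-\Delta_{\tilde y}+\langle C\tilde y,\tilde y\rangle+A,
\]
where $L_0$ acts on $L^2(\RR^{n-2}_{\tilde y})$ and, being a harmonic oscillator, admits a complete orthonormal basis of Schwartz eigenfunctions $\phi_j$ with eigenvalues $\nu_0\le\nu_1\le\cdots$; hence the spectrum of $K$ equals $[\nu_0,\infty)$. Given $\lambda$ in this set, I pick $j$ with $\nu_j\le\lambda$, set $x_0:=\sqrt{(\lambda-\nu_j)/\kappa}$, fix $\chi\in C_c^\infty(\RR)$ with $\|\chi\|_{L^2}=1$, and define
\[
w^h_0(x,y):=h^{-\frac{1}{4(k+2)}}\,\chi\!\left(\frac{x-x_0}{h^{1/(2(k+2))}}\right)\phi_j(y+xC^{-1}b).
\]

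Then $w^h_0\in\cS(\RR^{n-1})$ (product of a compactly-supported $x$-factor with a Schwartz function of a linear form in $(x,y)$) and $\|w^h_0\|=1$. By the separated form, $(K-\lambda)w^h_0=\kappa(x^2-x_0^2)w^h_0$, and since $|x-x_0|\le h^{1/(2(k+2))}$ on the support of the cut-off, $|x^2-x_0^2|\le C h^{1/(2(k+2))}$, giving the claimed quasimode estimate. For the derivative bound \eqref{e:wh0}, each $\partial_{\sigma_j}$ either falls on the rescaled cut-off, costing a single factor $h^{-1/(2(k+2))}$, or on $\phi_j$ (after the shift by $xC^{-1}b$, which stays bounded on the support of $\chi_h$), producing only $O(1)$; so $|\alpha|$ derivatives cost at most $h^{-|\alpha|/(2(k+2))}$. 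The only genuinely delicate point is clean verification of the two positivity statements---of $C$ and of $\kappa$---from the non-degeneracy of the minimum of $|\omega_{0,1}|$; once these are in hand the rest is a short harmonic-oscillator plus cut-off computation with no semiclassical subtleties.
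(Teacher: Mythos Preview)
Your proof is correct and follows essentially the same approach as the paper's: separate $K$ via a linear change of variables into a transverse harmonic oscillator plus a quadratic potential $\kappa x^2$ in the $e_\omega$-direction, then take a transverse eigenfunction multiplied by a bump in $x$ concentrating on the scale $h^{1/(2(k+2))}$ around the point where the potential hits the right value. The only cosmetic differences are that the paper uses a Gaussian instead of a compactly supported cut-off, and realizes the separation by replacing $e_\omega$ with an $\Omega$-orthogonal vector $e'_\omega$ rather than shearing the transverse variables --- but your shear $\tilde y = y + x C^{-1}b$ is precisely the inverse of the paper's basis change (so in particular your Schur complement $\kappa$ equals their $\Omega'_{11}$ up to a harmless scaling), and you actually justify the needed positivity of $\Omega$ from the miniwell hypothesis, which the paper leaves implicit.
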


\begin{proof} Take an arbitrary vector $e^\prime_\omega\in \RR^{n-1}$, which
is orthogonal to the vectors $e_2, \ldots, e_{n-1}$ with respect to
the positive definite bilinear form $\Omega$ in $\RR^{n-1}$ given by
\[
\Omega(\sigma,\sigma^\prime)=\sum_{r,m} \Omega_{rm} \sigma_r
\sigma^\prime_m.
\]
Consider the linear coordinate system in $\RR^{n-1}$ with
coordinates $(\rho_1,\rho_2,\ldots,\rho_{n-1})$ defined by the base
$(e^\prime_\omega, e_2, \ldots, e_{n-1})$. In these coordinates, the
operator $K$ has the form
\[
K=-\sum_{j=2}^{n-1}\frac{\partial^2}{\partial
\rho^2_j}+\Omega^\prime_{11}\rho_1^2+\sum_{r,m=2}^{n-2}
\Omega^\prime_{rm} \rho_r \rho_m,
\]
where $\Omega^\prime_{11}>0$ and the quadratic form
$\sum_{r,m=2}^{n-2} \Omega^\prime_{rm} \rho_r \rho_m$ is positive
definite.

With respect to the decomposition $L^2(\RR^{n-1})=L^2(\RR_{\rho_1})
\otimes L^2(\RR^{n-2}_{(\rho_2,\ldots,\rho_{n-1})})$, the operator
$K$ can be written as $K=V\otimes I+I\otimes K_0$, where $V$ is the
multiplication operator in $L^2(\RR_{\rho_1})$ by the function
\[
V(\rho_1)=\Omega^\prime_{11}\rho_1^2,\quad \rho_1\in \RR,
\]
and $K_0$ is the harmonic oscillator in $L^2(\RR^{n-2}_{\rho_2,
\ldots,\rho_{n-1}})$ given by
\[
K_0=-\sum_{j=2}^{n-1}\frac{\partial^2}{\partial
\rho^2_j}+\sum_{r,m=2}^{n-2} \Omega^\prime_{rm} \rho_r \rho_m.
\]
Therefore (see, for instance, \cite[Theorem VIII.33]{RSI}), the
spectrum of $K$ equals
\[
\sigma(K)=\overline{\sigma(V)+\sigma(K_0)}=[\lambda_0(K_0),
+\infty),
\]
where $\lambda_0(K_0)=\inf \sigma(K_0)$.

Denote by $u_0\in {\mathcal S}(\RR^{n-2})$, $\|u_0\|=1$, the
eigenfunction of $K_0$ associated with $\lambda_0(K_0)$. Define the
function $w^h_0\in {\mathcal S}(\RR^{n-1})$ by the formula
\begin{equation}\label{e:wh0-explicit}
w^h_0(\rho)=ch^{-1/2(k+2)}e^{-(\rho_1-a)^2/(2h^{2/(k+2)})}
u_0(\rho_2,\ldots,\rho_{n-1}),\quad \rho\in\RR^{n-1},
\end{equation}
where the constant $c>0$ is chosen in such a way that $\|w^h_0\|=1$
and $a$ satisfies the condition $$\lambda =\lambda_0(K_0)
+\Omega^\prime_{11}a^2\,.
$$
 Then
\begin{multline*}
Kw^h_0(\rho)= \lambda_0(K_0)w^h_0(\rho) \\ +
ch^{-1/2(k+2)}\Omega^\prime_{11}\rho_1^2
e^{-(\rho_1-a)^2/(2h^{2/(k+2)})} u_0(\rho_2,\ldots,\rho_{n-1}),\quad
\rho\in\RR^{n-1},
\end{multline*}
and, therefore, we have
\begin{align*}
\|Kw^h_0-\lambda w^h_0\| & = c\Omega^\prime_{11} h^{-1/2(k+2)}
\left(\int (\rho_1^2 - a^2)^2 e^{-(\rho_1-a)^2/h^{2/(k+2)}} d\rho_1
\right)^{1/2}\\ & \leq Ch^{1/2(k+2)}.
\end{align*}
The estimates \eqref{e:wh0} follow immediately from the explicit
formula \eqref{e:wh0-explicit} for $w^h_0$.
\end{proof}

Take $w^h_0$ as in Lemma~\ref{l:quasimodesK} and
\[
 w^h_1=-(\widehat P_0-\hat{\nu})^{-1}(I-\Pi_0)\Big(\widehat P_2R_0^- -\widehat P_1E_0
\widehat P_1 R_0^- \Big) w^h_0.
\]
The operator $-(\widehat P_0-\hat{\nu})^{-1}(I-\Pi_0)\Big(\widehat
P_2 -\widehat P_1E_0 \widehat P_1 \Big)$ has the form of a second
order differential operator in $\sigma$, whose coefficients are
bounded operators in $L^2(\RR,d\tau)$. Using this fact, it can be
easily checked that, for any multi-index
$\alpha=(\alpha_1,\ldots,\alpha_{n-1})$, there exists a constant
$C_\alpha>0$ such that, for any $h>0$, we have
\begin{equation}\label{e:wh1}
\left\|\partial^\alpha_\sigma w^h_1\right\| \leq C_\alpha
h^{-\frac{|\alpha|}{2(k+2)}}.
\end{equation}

Put
\begin{align*}
v^h(\tau,\sigma)= & (R_0^--h^{\frac{1}{2(k+2)}}E_0 \widehat P_1 R_0^-)
w^h_0(\tau,\sigma)+h^{\frac{1}{k+2}} R_0^-w^h_1(\tau,\sigma)\\
= & u^0_{\alpha_{\mathrm{min}}}(\tau)w^h_0(\sigma) - i
h^{\frac{1}{2(k+2)}} \omega_{\mathrm{min}}(B)^{-\frac{1}{k+2}}
\frac{\partial u^0_\alpha}{\partial\alpha}(\tau) {\widehat e}_\omega
w^h_0(\sigma)\\
& +h^{\frac{1}{k+2}}w^h_1(\tau,\sigma).
\end{align*}
Then, with $z(h)=\hat{\nu}\omega_{\mathrm{min}}(B)^{\frac{2}{k+2}}
h^{\frac{2k+2}{k+2}}+\lambda h^{\frac{2k+3}{k+2}}$, we have
\begin{align*}
({\widehat P}^{h}-z(h))v^h =&
\omega_{\mathrm{min}}(B)^{\frac{2}{k+2}}
h^{\frac{2k+2}{k+2}}\Big[(\widehat P_0-\hat{\nu})R_0^-w^h_0\\
& +h^{\frac{1}{2(k+2)}} \Big(\widehat P_1 R_0^-w^h_0-(\widehat
P_0-\hat{\nu})E_0
\widehat P_1 R_0^-w^h_0 \Big)\\
& + h^{\frac{1}{k+2}} \Big(\Big((\widehat P_2-
\omega_{\mathrm{min}}(B)^{-\frac{2}{k+2}}\lambda)R_0^- -\widehat
P_1E_0 \widehat P_1 R_0^- \Big) w^h_0 \\ & +(\widehat P_0-\hat{\nu})
w^h_1\Big)\Big] + O(h^{\frac{4k+7}{2(k+2)}})\\
= & \omega_{\mathrm{min}}(B)^{\frac{2}{k+2}}
h^{\frac{2k+3}{k+2}}\Big[\Pi_0 \Big((\widehat P_2 -
\omega_{\mathrm{min}}(B)^{-\frac{2}{k+2}}\lambda)R_0^-\\ & -\widehat
P_1E_0
\widehat P_1 R_0^- \Big) w^h_0\Big] + O(h^{\frac{4k+7}{2(k+2)}})\\
= & h^{\frac{2k+3}{k+2}}R_0^- (K-\lambda) w^h_0+
O(h^{\frac{4k+7}{2(k+2)}}),
\end{align*}
Therefore, we obtain
\[
\|({\widehat P}^{h}-z(h))v^h\|=O(h^{\frac{4k+7}{2(k+2)}}).
\]
From \eqref{e:HhPh}, it follows that the function
\begin{multline*}
U^h(t,s)=\chi(t,s) h^{-\frac{n+1}{4(k+2)}}
\exp\left(-i\frac{\phi(s)}{h} \right)\exp\left(
i\frac{\alpha_{\mathrm{min}}
\sum\limits_{j=1}^{n-1}\omega_{0,1}^{(j)}(0)s_j}{\omega_{\mathrm{min}}(B)^{\frac{k+1}{k+2}}
h^{\frac{1}{k+2}}}\right)  \\
\times v^h(\omega_{\mathrm{min}}(B)^{\frac{1}{k+2}}
h^{-1/(k+2)}t,h^{-1/2(k+2)}s), \quad s\in B(0,r),\quad t\in \RR,
\end{multline*}
where $\chi$ is a cut-off function, satisfies $\|U_h\|=1+o(1)$ and
\[
(H^{h}_0- z(h)) U^h =O(h^{\frac{4k+7}{2(k+2)}}).
\]
Using \eqref{e:wh0} and \eqref{e:wh1}, one can easily verify that
the function $U^h$ satisfies the conditions~\eqref{e:uh},
\eqref{e:u1h} and \eqref{e:u2h}. By Lemma~\ref{l:model}, it follows
that
\[
(H^{h}- z(h)) U^h =O(h^{\frac{4k+7}{2(k+2)}})\,,
\]
that completes the proof of Theorem~\ref{YK:l1}.
\medskip\par
If Conjecture~\ref{c:main} is true, following the arguments of
\cite{Fournais-Helffer06}, one can prove the following refined
version of Theorem~\ref{YK:l1} (cf. \cite[Theorem
3.1]{Fournais-Helffer06}): for any $\lambda$ in the spectrum of $K$,
there exist a sequence $\{\zeta_j\}_{j=0}^\infty\subset \RR$ and a
sequence of functions $\{\phi_j\}_{j=0}^\infty$ in $C^\infty_c(D)$
such that, for any $N>0$, there exists $M>0$ such that, if
\[
z_M(h)= \hat{\nu} \omega_{\mathrm{min}}(B)^{\frac{2}{k+2}}
h^{\frac{2k+2}{k+2}}+ \lambda h^{\frac{2k+3}{k+2}}+
h^{\frac{4k+7}{2(k+2)}}\sum_{j=0}^M h^{\frac{j}{2(k+2)}}\zeta_j\,.
\]
and
\[
\phi^h_M(x)=\sum_{j=0}^M h^{\frac{j}{2(k+2)}}\phi_j(x)
\]
then
\[
\|\left(H^{h}-z_M(h)\right)\phi^h_M\|_{L^2}\leq
O(h^{N})\|\phi^h_M\|_{L^2}, \quad h\to 0.
\]
Moreover, if, in addition, Conjecture~\ref{c:main2} is true and a
miniwell $x_1\in S$ is unique, one can show, following the lines of
\cite{Fournais-Helffer06}, that the $m$-th eigenvalue
$\lambda_m(H^h)$ of the operator $H^h$ admits the asymptotic
expansion
\[
\lambda_m(H^h) = \hat{\nu} \omega_{\mathrm{min}}(B)^{\frac{2}{k+2}}
h^{\frac{2k+2}{k+2}}+ \lambda_m h^{\frac{2k+3}{k+2}}+
h^{\frac{4k+7}{2(k+2)}}\sum_{j=0}^\infty
h^{\frac{j}{2(k+2)}}\zeta_j\,,
\]
where $\lambda_m$ is $m$-th eigenvalue of the operator $K$.

\subsection{Proof of Theorem~\ref{t:gaps}}
To complete the proof of Theorem~\ref{t:gaps}, we will use a general
result on the existence of gaps in the spectrum of the magnetic
Schr\"odinger operator $H^h$ on the interval $[0,
h(b_0+\epsilon_0)]$ obtained in~\cite[Theorem 2.1]{diff2006}. Fix
$\epsilon_1>0$ and $\epsilon_2>0$ such that $\epsilon_1 < \epsilon_2
< \epsilon_0$, and consider the Dirichlet realization $H^h_D$ on the
operator $H^h$ in the domain $D=\overline{U_{\epsilon_2}}$. The
operator $H^h_D$ has discrete spectrum.

\begin{theorem} \label{t:abstract}
Let $N\geq 1$. Suppose that there is a subset
$\mu_0^h<\mu_1^h<\ldots <\mu_{N}^h$ of an interval $I(h)\subset [0,
h(b_0+\epsilon_1))$ such that
\begin{enumerate}
  \item There exist constants $c>0$ and $M \geq 1$ such that
\[
\begin{split}
& \mu_{j}^h-\mu_{j-1}^h>ch^M, \quad j=1,\ldots,N,\\
& {\rm dist}(\mu_0^h,\partial I(h))>ch^M,\quad {\rm
dist}(\mu_N^h,\partial I(h))>ch^M,
\end{split}
\]
for any $h>0$ small enough;
  \item Each $\mu_j^h, j=0,1,\ldots,N,$ is an approximate eigenvalue of
the operator $H^h_D$: for some $v_j^h\in C^\infty_c(D)$ we have
\[
\|H^h_Dv_j^h-\mu^h_jv_j^h\|=\alpha_j(h)\|v_j^h\|,
\]
where $\alpha_j(h)=o(h^M)$ as $h\to 0$.
\end{enumerate}
Then the spectrum of $H^h$ on the interval $I(h)$ has at least $N$
gaps for any sufficiently small $h>0$.
\end{theorem}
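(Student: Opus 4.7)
The strategy is to reduce to the Dirichlet realization $H^h_D$ via the tunneling estimate of \cite{gaps}, and then combine a polynomial Weyl-type bound on the number of eigenvalues of $H^h_D$ in $I(h)$ with the well-separation of the approximate eigenvalues $\mu_j^h$ to produce $N$ disjoint gaps.

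First, I would invoke Theorem~2.1 of \cite{gaps}, giving constants $C,c_0,h_0>0$ such that for $h\in(0,h_0]$,
\[
\sigma(H^h)\cap[0,h(b_0+\epsilon_0)]\subset\{\lambda:{\rm dist}(\lambda,\sigma(H^h_D))\leq Ce^{-c_0/\sqrt{h}}\},
\]
where $H^h_D$ has discrete spectrum $\{\lambda_k^h\}$ since $D=\overline{U_{\epsilon_2}}$ is compact. The standard quasi-mode spectral bound applied to hypothesis~(2) yields ${\rm dist}(\mu_j^h,\sigma(H^h_D))\leq\alpha_j(h)=o(h^M)$. Since the $\mu_j^h$ are $\geq ch^M$ apart, for small $h$ the eigenvalues of $H^h_D$ nearest to different $\mu_j^h$'s are themselves distinct, and each lies within $Ce^{-c_0/\sqrt{h}}$ of a point of $\sigma(H^h)$. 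This produces $N+1$ points $p_0<p_1<\cdots<p_N$ in $\sigma(H^h)\cap I(h)$ (the boundary-distance condition in~(1) keeps them inside $I(h)$) satisfying $p_j-p_{j-1}\geq(c-o(1))h^M$.

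The crucial quantitative input is a polynomial Weyl-type bound for the magnetic Dirichlet operator on the compact domain $D$: via a min--max comparison with a standard Dirichlet Laplacian using the diamagnetic inequality, one obtains some $K=K(n)$ and $C_1>0$ with
\[
\#\{k:\lambda_k^h\leq h(b_0+\epsilon_0)\}\leq C_1 h^{-K}.
\]
Consequently $\sigma(H^h)\cap I(h)$ is covered by $O(h^{-K})$ intervals each of length $\leq 2Ce^{-c_0/\sqrt{h}}$, so its total Lebesgue measure is $O(h^{-K}e^{-c_0/\sqrt{h}})=o(h^M)$ for every fixed $M$.

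Finally, for each $j=1,\dots,N$, the open interval $J_j:=(p_{j-1},p_j)\subset I(h)$ has length $\geq(c-o(1))h^M$, while $\sigma(H^h)\cap J_j$ has Lebesgue measure $o(h^M)$; hence $J_j\setminus\sigma(H^h)$ contains a non-empty open sub-interval. Because $p_{j-1},p_j\in\sigma(H^h)$, the corresponding maximal gap of $\sigma(H^h)$ stays inside $J_j$, and as the $J_j$ are pairwise disjoint this furnishes $N$ distinct gaps in $I(h)$. The main obstacle is the Weyl bound of the third paragraph, which must hold uniformly in $h$ despite a potentially strong magnetic potential; however any polynomial in $h^{-1}$ suffices, as it is defeated by the tunneling factor $e^{-c_0/\sqrt{h}}$, and this mismatch between exponentially thin spectral clusters and polynomially spaced quasi-modes is precisely what drives the theorem.
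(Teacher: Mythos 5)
The paper itself does not prove Theorem~\ref{t:abstract}; it is quoted from~\cite[Theorem~2.1]{diff2006}. So there is no in-paper proof to compare against, and I will evaluate your argument on its own terms. Your overall scheme -- combine (i) the tunneling estimate of~\cite{gaps} saying $\sigma(H^h)\cap[0,h(b_0+\epsilon_0)]$ is contained in an exponentially small neighbourhood of $\sigma(H^h_D)$, (ii) the quasi-modes to produce $N+1$ points of $\sigma(H^h)$ in $I(h)$ separated by $\gtrsim h^M$, and (iii) a polynomial Weyl bound for $H^h_D$ on the compact domain $D=\overline{U_{\epsilon_2}}$ forcing the measure of $\sigma(H^h)\cap I(h)$ to be $o(h^M)$ -- is sound and is consistent with what the authors describe as the ``general scheme'' of~\cite{diff2006}.

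One step as written has a gap. You obtain the points $p_j\in\sigma(H^h)$ by first placing eigenvalues of $H^h_D$ near each $\mu_j^h$ and then asserting that each such eigenvalue ``lies within $Ce^{-c_0/\sqrt{h}}$ of a point of $\sigma(H^h)$.'' But the tunneling estimate you quoted is one-sided: it localizes $\sigma(H^h)$ near $\sigma(H^h_D)$, not the reverse. An eigenfunction of $H^h_D$ concentrating near $\partial D$ could in principle have its eigenvalue far from $\sigma(H^h)$, so the converse inclusion is not free. The fix is immediate and makes the detour through $\sigma(H^h_D)$ unnecessary: $v_j^h\in C^\infty_c(D)$, extended by zero, lies in the domain of $H^h$ and $H^h v_j^h=H^h_D v_j^h$ by locality, so hypothesis (2) directly gives $\mathrm{dist}(\mu_j^h,\sigma(H^h))\le\alpha_j(h)=o(h^M)$. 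With that substitution your argument goes through: the separation and boundary-distance conditions yield distinct $p_0<\cdots<p_N$ in $I(h)$, the Weyl bound (via the diamagnetic heat-trace comparison on the fixed compact $D$, giving $O(h^{-n/2})$ eigenvalues of $H^h_D$ below $h(b_0+\epsilon_0)$) gives $|\sigma(H^h)\cap I(h)|=O(h^{-n/2}e^{-c_0/\sqrt{h}})=o(h^M)$, and each $J_j=(p_{j-1},p_j)$ therefore contains a gap of $\sigma(H^h)$ entirely inside $\overline{J_j}$.
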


Fix any $N\geq 1$. If $\frac{\partial^2 \lambda_0}{\partial\alpha^2}
(\alpha_{\mathrm{min}},1) >0$, then $K$ has discrete spectrum
$\lambda_0 < \lambda_1<\ldots$, $\lambda_j\to \infty$ as $j\to
\infty$. Take an arbitrary $b_N > \lambda_N$. By
Theorem~\ref{YK:l1}, for any $j=0,1,\ldots,N$, there exist $C_j>0$,
$h_{0,j}>0$, $\phi_j(h)\in C^\infty_c(D)$ and $z_j(h)$ with
\[
z_j(h)= \hat{\nu} \omega_{\mathrm{min}}(B)^{\frac{2}{k+2}}
h^{\frac{2k+2}{k+2}}+ \lambda_j h^{\frac{2k+3}{k+2}}+
O(h^{\frac{4k+7}{2k+4}})
\]
such that, for any $h\in (0,h_{0,j}]$, we have
\[
\|\left(H^{h}_D-z_j(h)\right)\phi(h)\|\leq C_j
h^{\frac{4k+7}{2k+4}}\|\phi(h)\|.
\]
So we apply Theorem~\ref{t:abstract} with
\[
I(h)= \left[\hat{\nu}\,
\omega_{\mathrm{min}}(B)^{\frac{2}{k+2}}h^{\frac{2k+2}{k+2}},
\hat{\nu}\,
\omega_{\mathrm{min}}(B)^{\frac{2}{k+2}}h^{\frac{2k+2}{k+2}} +
b_Nh^{\frac{2k+3}{k+2}}\right]
\]
and $\mu_j^h=z_j(h), j=0,1,\ldots,N$, that completes the proof in
this case.

If $\frac{\partial^2 \lambda_0}{\partial\alpha^2}
(\alpha_{\mathrm{min}},1)=0$, then the spectrum of $K$ is a
semi-axis $[\lambda_0,\infty)$. Taking an arbitrary $b_N >
\lambda_0$ and arbitrary $\lambda_1<\ldots < \lambda_N$ on the
interval $(\lambda_0,b_N)$ and proceeding as above, we complete the
proof.

\end{document}